\g@addto@macro\normalsize{%
  \setlength\abovedisplayskip{10pt}
  \setlength\belowdisplayskip{10pt}
  \setlength\abovedisplayshortskip{5pt}
  \setlength\belowdisplayshortskip{8pt}
} 
\newtheoremstyle{normal}% normale Schrift
{5pt}% hSpace abovei
{5pt}% hSpace belowi
{\normalfont}% hBody fonti
{}% hIndent amounti1
{\bfseries}% hTheorem head fonti
{}% Punctuation after theorem headi
{0.4em}% hSpace after theorem headi2
{\bfseries{\thmname{#1}\thmnumber{ #2}.\thmnote{ \hspace{0.5em}(#3)\newline}}}% hTheorem head spec (can be left empty, meaning `normal')
\newtheoremstyle{kursiv}% kursive Schrift
{5pt}% hSpace abovei
{5pt}% hSpace belowi
{\itshape}% hBody fonti
{}% hIndent amounti1
{\bfseries}% hTheorem head fonti
{}% Punctuation after theorem headi
{0.4em}% hSpace after theorem headi2
\theoremstyle{kursiv}
\theoremstyle{normal}
\newtheorem{thm}{Theorem}[section]
\newtheorem{ex}[thm]{Example}
\newtheorem{rem}[thm]{Remark}
\newtheorem{cor}[thm]{Corollary}
\newtheorem{lem}[thm]{Lemma}
\newtheorem{prop}[thm]{Proposition}
\newtheorem{dfn}[thm]{Definition}
\newtheorem{ass}[thm]{Assumption}
\renewcommand{\epsilon}{\varepsilon}
\newcommand{\id}{\operatorname{id}\nolimits}
\renewcommand{\Re}{\operatorname{Re}\nolimits}
\renewcommand{\theta}{\uptheta}
\newcommand{\pr}{\operatorname{pr}\nolimits}
\newcommand{\gr}{\operatorname{Graph}\nolimits}
\newcommand{\diam}{\operatorname{diam}}
\newcommand{\ran}{\operatorname{Range}\nolimits}
\newcommand{\Cnull}{\operatorname{C_0}\nolimits}
\newcommand{\diag}{\operatorname{diag}\nolimits}
\newcommand{\red}[1]{\textcolor{black}{#1}}
\definecolor{grey}{gray}{.3}
\newcommand{\Ls}{\operatorname{L}}
\newcommand{\dd}{\mathrm{d}}
\providecommand*{\cupdot}{%
  \mathbin{%
    \mathpalette\@cupdot{}%
  }%
}
\newcommand*{\@cupdot}[2]{%
  \ooalign{%
    $\m@th#1\cup$\cr
    \sbox0{$#1\cup$}%
    \dimen@=\ht0 %
    \sbox0{$\m@th#1\cdot$}%
    \advance\dimen@ by -\ht0 %
    \dimen@=.5\dimen@
    \hidewidth\raise\dimen@\box0\hidewidth
  }%
}
\providecommand*{\bigcupdot}{%
  \mathop{%
    \vphantom{\bigcup}%
    \mathpalette\@bigcupdot{}%
  }%
}
\newcommand*{\@bigcupdot}[2]{%
  \ooalign{%
    $\m@th#1\bigcup$\cr
    \sbox0{$#1\bigcup$}%
    \dimen@=\ht0 %
    \advance\dimen@ by -\dp0 %
    \sbox0{\scalebox{2}{$\m@th#1\cdot$}}%
    \advance\dimen@ by -\ht0 %
    \dimen@=.5\dimen@
    \hidewidth\raise\dimen@\box0\hidewidth
  }%
}
\renewcommand{\tocsection}[3]{%
  \indentlabel{\@ifnotempty{#2}{\bfseries\ignorespaces#1 #2\quad}}\bfseries#3}
\renewcommand{\tocsubsection}[3]{%
  \indentlabel{\@ifnotempty{#2}{\ignorespaces#1 #2\quad}}#3}
\newcommand\@dotsep{4.5}
\def\@tocline#1#2#3#4#5#6#7{\relax
  \ifnum #1>\c@tocdepth % then omit
  \else
    \par \addpenalty\@secpenalty\addvspace{#2}%
    \begingroup \hyphenpenalty\@M
    \@ifempty{#4}{%
      \@tempdima\csname r@tocindent\number#1\endcsname\relax
    }{%
      \@tempdima#4\relax
    }%
    \parindent\z@ \leftskip#3\relax \advance\leftskip\@tempdima\relax
    \rightskip\@pnumwidth plus1em \parfillskip-\@pnumwidth
    #5\leavevmode\hskip-\@tempdima{#6}\nobreak
    \leaders\hbox{$\m@th\mkern \@dotsep mu\hbox{.}\mkern \@dotsep mu$}\hfill
    \nobreak
    \hbox to\@pnumwidth{\@tocpagenum{\ifnum#1=1\fi#7}}\par% <-- \bfseries for \section page
    \nobreak
    \endgroup
  \fi}
\renewcommand\csname r@tocindent0\endcsname{0pt}
\def\l@section{\@tocline{2}{0pt}{0pc}{1.5pc}{}}
\def\l@subsection{\@tocline{2}{0pt}{2.0pc}{2.0pc}{}}
\begin{document}

\allowdisplaybreaks

\title{Dissipative extensions  %of operators with unequal deficiency indices 
and port-hamiltonian\\operators on networks}

\author{Marcus Waurick\hspace{0.5pt}\MakeLowercase{$^{\text{1}}$} and Sven-Ake Wegner\hspace{0.5pt}\MakeLowercase{$^{\text{2}}$}}

\renewcommand{\thefootnote}{}
\hspace{-1000pt}\footnote{\hspace{5.5pt}2010 \emph{Mathematics Subject Classification}: Primary 93D15; Secondary 47D06.\vspace{1.6pt}}

\hspace{-1000pt}\footnote{\hspace{5.4pt}\emph{Key words and phrases}: $\Cnull$-semigroup, contraction semigroup, hyperpolic pde, port-Hamiltonian, well-posedness, dif-\linebreak\phantom{x}\hspace{1.2pt}ferential equations on networks, boundary system, boundary triplet, boundary triple, quantum graph. \vspace{1.6pt}}

\hspace{-1000pt}\footnote{\hspace{0pt}$^{1}$\,Corresponding author: University of Strathclyde, Department of Mathematics and Statistics, Livingstone Tower, 26\linebreak\phantom{x}\hspace{1.2pt}Richmond Street, Glasgow G1\:1XH, Scotland, Phone: +44\hspace{1.2pt}(0)\hspace{1.2pt}141\hspace{1.2pt}/\hspace{1.2pt}548\hspace{1.2pt}-\hspace{1.2pt}3817, E-Mail: marcus.waurick@strath.ac.uk.\vspace{1.6pt}}

\hspace{-1000pt}\footnote{\hspace{0pt}$^{2}$\,Teesside University, School of Science, Engineering and Design, Southfield Road, Middlesbrough, TS1\;3BX, United\linebreak\phantom{x}\hspace{1.2pt}Kingdom, phone: +44\,(0)\,1642\:73\:82\:00, e-mail: s.wegner@tees.ac.uk.\vspace{1.6pt}}

%\linebreak\phantom{x}\hspace{1.2pt}

\begin{abstract}
In this article we study port-Hamiltonian partial differential equations on certain one-dimensional manifolds. We classify those boundary conditions that give rise to contraction semigroups. As an application we study port-Hamiltonian operators on networks whose edges can have finite or infinite length. In particular, we discuss possibly infinite networks in which the edge lengths can accumulate zero and port-Hamiltonian operators with Hamiltonians that neither are bounded nor bounded away from zero. We achieve this, by first providing a new description for maximal dissipative extensions of skew-symmetric operators. The main technical tool used for this is the notion of boundary systems. The latter generalizes the classical notion of boundary triple(t)s and allows to treat skew-symmetric operators with unequal deficiency indices. In order to deal with fairly general variable coefficients, we develop a theory of possibly unbounded, non-negative, injective weights on an abstract Hilbert space.
\end{abstract}

\maketitle

\tableofcontents %Introduce Table Of Contents; too messy without.

%\newpage 
%%%%%%%%%%%%%%%%%%%%%%%%%%%%%%%%%%%%%%%%%%%%%%%%%
%%%%%%%%%%%%%%%%%%%%%%%%%%%%%%%%%%%%%%%%%%%%%%%%%
%%                                             %%
%% 1 Introduction                              %%
%%                                             %%
%%%%%%%%%%%%%%%%%%%%%%%%%%%%%%%%%%%%%%%%%%%%%%%%%
%%%%%%%%%%%%%%%%%%%%%%%%%%%%%%%%%%%%%%%%%%%%%%%%%

\section{Introduction}\label{SEC:1}\smallskip

The subject of differential operators on one-dimensional manifolds and their boundary conditions is a very active area of research. The main question is to relate boundary conditions on well-understood boundary data spaces to properties of the differential operator defined on a suitable orthogonal sum of $\Ls^2$-spaces. For this a great deal of research has been devoted to the second derivative operator, or variants thereof, and boundary conditions leading to self-adjoint realizations of these operators. 

\smallskip

Self-adjoint realizations and results on the spectrum, see e.g.\ Exner, Kostenko, Malamud, Neidhardt \cite{Exner2018} and the references therein, for spectral properties of second derivative operators, allow to conclude dynamic properties of evolution equations on the one-dimensional manifold. Due to their applications in mathematical physics a pair consisting of a manifold and a differential operator is also said to be a \textit{quantum graph}. In this context also the first derivative operator together with boundary conditions leading to skew-self-adjoint realizations is of interest---for instance to understand one-dimensional model cases of the Dirac equation, see e.g.~Carlone, Malamud, Posilicano \cite{Carlone2013}. 

\smallskip

The class of port-Hamiltonian equations forms a general framework that covers as special cases for instance the transport equation, the wave equation or the Euler-Bernoulli beam equation. More precisely, a port-Hamiltonian differential equation is of the form
\begin{equation}\label{EQ-PHS-INTRO}
\frac{\partial x}{\partial t}(\xi,t)=P_1\frac{\partial}{\partial \xi}(\mathcal{H}(\xi)x(\xi,t))+P_0\hspace{0.5pt}\mathcal{H}(\xi)x(\xi,t)
\end{equation}
where $P_1\in\mathbb{C}^{d\times d}$ is Hermitian and invertible, $P_0\in\mathbb{C}^{d\times d}$ is arbitrary and $\mathcal{H}\colon[0,\infty)\rightarrow\mathbb{C}^{d\times d}$, the Hamiltonian or Hamiltonian density matrix, is measurable and Hermitian almost everywhere---plus regularity and boundedness assumptions that vary throughout the literature. These operators together with their boundary conditions have been studied mainly in the case of finite or infinite intervals in that the question if the port-Hamiltonian operator
\begin{equation}\label{EQ-PHS-INTRO-OP}
Ax:=P_1(\mathcal{H}x)'+P_0\mathcal{H}x,
\end{equation}
defined on a possible weighted $\Ls^2$-space and endowed with boundary conditions encoded in its domain,  generates a $\Cnull$-semigroup. This question has been addressed, e.g., by Augner \cite{A, Augner2018}, Augner, Jacob \cite{AJ2014}, Le Gorrec, Zwart, Maschke \cite{GZM2005}, Jacob, Kaiser \cite{JK}, Jacob, Morris, Zwart \cite{JMZ2015}, Jacob, Wegner \cite{JW2018}, Jacob, Zwart \cite{JZ}, Villegas \cite{V2007}. Zwart, Le Gorrec, Maschke, Villegas \cite{ZGMV2010} to mention only a sample. For more historical information---and in particular for application and methods related to port-Hamiltonian systems in systems theory---we refer to the book \cite{JZ} and the references therein. 

\smallskip

The next challenge is to consider port-Hamiltonian operators on networks. Also here, results are available. In \cite[Example 3.3]{JMZ2015} a generation results is applied to the transport equation on a finite network. In \cite[Section 5]{JK} this is extended to an infinite line graph and to an infinite binary tree, both with edges being unit intervals. In \cite[Example 6.2]{JW2018} again a finite network is treated but with all edges being semi-axis'. A result for more general graphs and port-Hamiltonian equations other than the case of transport seems not to be available so far. The main aim of the present article is to provide the latter and thus to characterize all port-Hamiltonian operators that generate a $\Cnull$-semigroup of contractions on networks as general as possible and with assumptions on the Hamiltonian as general as possible.

\smallskip

We aim to characterize the aforementioned operators by means of boundary conditions. The main technical tool for the latter will be the notion of boundary systems, see Schubert, Seifert, Voigt, Waurick \cite{BS}. Boundary systems allow to describe extensions of (skew-)symmetric operators defined on a `large' Hilbert space of functions via a `small' Hilbert space of boundary values. In \cite{BTBS} the authors have shown that  boundary systems can be used in particular for operators with unequal deficiency indices. In this they supersede the concept of boundary triple(t)s$^{\,3}$\footnote{$^3$\,There are two concurrent variants of spelling in the literature: \emph{boundary triplet} and \emph{boundary triple}. Mathematically\linebreak\phantom{x}\hspace{1.2pt}both notions coincide. Although the first variant seems to be much more popular we will stick in this article to the\linebreak\phantom{x}\hspace{1.2pt}second. The latter seems grammatically to be more convincing, since a boundary triple indeed consists of three objects,\linebreak\phantom{x}\hspace{1.2pt}a space and two maps.}. Notice that we \emph{do not} enlarge the Hilbert space on which the (skew-)symmetric operator was given initially and thus provide an `intrinsic' extension theory for (skew-)symmetric operators. In this article we are interested in generators of contraction semigroups. By the Lumer--Phillips theorem, see Phillips \cite{Phillips1959}, this amounts to finding maximal dissipative extensions of given operators. Using the elementary fact that an operator is skew-self-adjoint if and only if both the operator as well as its negative are maximal dissipative, see e.g.\ \cite[Proposition 4.5]{Waurick2017}, we can automatically characterize whether there exist skew-self-adjoint extensions of a given skew-symmetric operator and how the respective boundary conditions can be described.

\smallskip

As we mentioned above, we will apply our abstract findings to networks with unbounded coefficient operator and edge lengths that are not necessarily uniformly bounded away from zero. Unbounded coefficients and/or edge lengths having zero as a accumulation point form the most challenging issues in the description of differential operators on graphs. We refer to the concluding section in Lenz, Schubert, Veseli\'{c} \cite{LSV2014} for the particular issues and problems arising with arbitrarily small edge lengths for the Laplacian on networks. See also Gernand, Trunk \cite{Gernandt2018} and the references therein. In the case of Dirac operators, we refer to Carlone, Malamud, Posilicano \cite{Carlone2013} and the references therein. In the latter article, the authors employ the machinery of boundary triples---which we deliberately want to avoid in order to conveniently accommodate for edges with infinite length. For scalar Dirac operators with potentially unequal deficiency indices we refer to Schubert, Seifert, Voigt, Waurick \cite{BS}.

\smallskip

In the classical case of edge lengths being greater than a strictly positive number and bounded coefficients, we will in this contribution establish a boundary system for the port-Hamiltonian operator, i.e., for a matrix-valued first order differential operator with a zero-th order perturbation. Thus, we complement available results for both the scalar Dirac operator as well as the Laplacian. Substituting $v=u'$ in the equation $-u''=f$ and thus writing
\[
    \mathrm{i}\begin{bmatrix} 0 & -1 \\ 1 & 0 \end{bmatrix} \begin{bmatrix} u' \\ v' \end{bmatrix} +  \begin{bmatrix} 0 & 0\\ 0& \mathrm{i}\end{bmatrix} \begin{bmatrix} u\\ v \end{bmatrix} = \mathrm{i}\begin{bmatrix} f\\ 0\end{bmatrix} 
\]
we obtain a formally equivalent first order system. The operator induced by the expression
\[
  \begin{bmatrix} u \\ v \end{bmatrix} \mapsto   \mathrm{i}\begin{bmatrix} 0 & -1 \\ 1 & 0 \end{bmatrix} \begin{bmatrix} u' \\ v' \end{bmatrix} +  \begin{bmatrix} 0 & 0\\ 0& \mathrm{i}\end{bmatrix} \begin{bmatrix} u\\ v \end{bmatrix} 
\]
is a permitted choice of the port-Hamiltonian operators discussed here. Hence, the results also entail information on the Laplacian on graphs. In the following, we will, however, focus on the general form of port-Hamiltonian operators specified in \eqref{EQ-PHS-INTRO} and \eqref{EQ-PHS-INTRO-OP}.

\smallskip

We outline the plan of this paper. In the following section, we present and prove the new abstract characterization result for maximal dissipative extensions of skew-symmetric operators. This is contained in Theorem \ref{CLASS-THM}. The technique has been used in many variants since the 1990s, but always in the context of boundary triples, see Wegner \cite{BT} for a streamlined exposition and historical information. Section \ref{sec:Minty} provides a criterion for elements of an underlying Hilbert space to be contained in the domain of a maximal dissipative operator. This criterion, Proposition \ref{prop:mdr}, is well-known and in fact straightforward in the context of maximal monotone \emph{relations}. One possible way of proving Proposition \ref{prop:mdr} is to show that linear densely defined maximal monotone operators $H$ lead to maximal monotone relations $-H$. For this, one step would be to show that $1-H$ is onto and then to apply Minty's  celebrated theorem \cite{Minty}. The aim of Section \ref{sec:Minty} is to provide an independent proof of Proposition \ref{prop:mdr} as the latter is interesting in its own already for linear operators. To complete Section \ref{sec:Minty} we provide an independent proof, based on our previous work, of Minty's theorem for linear operators, see Theorem \ref{thm:Minty}.

\smallskip

Section \ref{sec:mdo} is concerned with weighted Hilbert spaces and how computing the adjoint of an operator $H$ in an unweighted space relates to the adjoint of $H\mathcal{H}$ in a Hilbert space with scalar product induced by $\langle \cdot,\mathcal{H}\cdot\rangle$. We shall also look into dissipative and maximal dissipative operators in the weighted and unweighted situation. The reason for looking at $H\mathcal{H}$ in a weighted space is that $H$ and $\mathcal{H}$ do not commute. To set the stage, we shall recall well-known results and techniques in Subsection \ref{SEC:4:1}. We mention in passing that these weighted scalar products for strictly positive definite and bounded $\mathcal{H}$ have been applied to equations in mathematical physics in order to deal with variable coefficients in a convenient manner, see e.g.~Picard, McGhee \cite{Picard2011}. These applications to other equations motivated us to provide a small theory of operators in weighted spaces and the adjoints thereof. In fact, we hope that the rationale developed in Subsection \ref{sec:mdo2} will turn out useful to find the proper functional analytic setting for divergence form equations with highly singular variable coefficients in the future. Two of  the main results of this section are Theorem \ref{thm:lfw} and Corollary \ref{cor:essX} concerning maximal dissipative and skew-self-adjoint operators comparing the case of weighted and unweighted Hilbert spaces. The third major result is Theorem \ref{thm:adj}, where the adjoint in the weighted space is compared to the adjoint in an unweighted space. Concerning the latter only its trivial consequence Corollary \ref{thm:adj-easy-cor}---to the best of the authors' knowledge---has been known already.

\smallskip

In Section \ref{SEC:3} we gather the basic definitions and results needed for our operator-theoretic approach to port-Hamiltonian systems. We particularly refer to a weighted analogue of Barb{\u a}lat's lemma, see Farkas, Wegner \cite[Theorem 5]{FW}, which proves to be important for port-Hamiltonian operators on the semi-axis. It is then the semi-axis, that we shall treat as our first example and indeed the port-Hamiltonian operator defined on a semi-axis is a prototype example for the application of the notion of boundary systems. In Subsection \ref{SEC:4} we establish a respective boundary system and thus provide an explicit description of all port-Hamiltonian operators being generators of a contraction semigroup. It turns out that the boundary conditions at zero decide on whether or not the port-Hamiltonian operator does generate a contraction semigroup. The subsequent Subsection \ref{SEC:positive-edge} is devoted to establish a boundary system for a port-Hamiltonian operator for networks with edge lengths not tending to zero and bounded Hamiltonian.

\smallskip

We enter the technically more involved issues of unbounded coefficients and/or arbitrarily small edge lengths in Section \ref{sec:phsII}. The reason, why almost all contributions so far restricted themselves to networks in which the edge lengths have a positive lower bound, is the following: In this case the point evaluation becomes a bounded operator from the Hilbert space that describes the port-Hamiltonian operator over the network into the Hilbert space that describes the boundary data. Of course the former Hilbert space is understood to be endowed with the graph norm. If this condition is violated, the point evaluation is necessarily unbounded. In fact, the boundary system that has been established in the section before cannot be used anymore precisely for this reason. If the edge lengths accumulate zero, the trace map, i.e., evaluation at the boundary, is not bounded operator anymore, and this is needed to get a boundary system. We refer also to Gernandt, Trunk \cite{Gernandt2018}, where boundary triples are used and the so-called $M$-function is an unbounded operator. In Subsection \ref{SEC:case-by-case} we present a first workaround for the aforementioned situation. Indeed, it is possible to use the boundary system from Subsection \ref{SEC:positive-edge} on any subgraph which has uniformly positive edge lengths. Under certain conditions, see Theorem \ref{super-thm2}, the results on these subgraphs can be put together and yield a sufficient conditions for maximal dissipative extensions on the inititial graph. In Subsection \ref{SEC:arbitrary-edge}, we recall the construction of a \emph{canonical} boundary system for any given skew-symmetric operator \cite{BTBS}. The advantage using the canonical boundary system is that point evaluation is not needed and, thus, unbounded boundary operators can be avoided as the canonical boundary system uses volume sources to encode the boundary conditions. Note that using volume sources instead of evaluations at the boundary is very useful to detour regularity issues at the boundary for certain partial differential equations, see Picard, Trostdorff, Waurick \cite{PTW16_IM} and Trostdorff \cite{Trostorff2011}. One therefore might interpret the methodology discussed here as a way to avoid regularity problems with the trace map.  In Theorem \ref{super-thm-2} we present the characterization of all maximal dissiptative extensions of a given skew-symmetric port-Hamiltonian operator on \emph{any} network---with coefficients that are allowed to be both unbounded and not uniformly bounded away from zero. Section \ref{sec:phsII} is concluded with Subsection \ref{SEC:model-network}, where we consider port-Hamiltonian operators without zeroth order term and with constant coefficients but with arbitrarily small edges. We employ Theorem \ref{super-thm-2} to associate with the boundary system for the operator on a network with arbitrary edge lengths a boundary system for an operator on a network with all edges being of length one. With this transformation, we can then again interpret any boundary condition on the complicated network with unbounded trace map via a simpler network that allows for a bounded trace map.

\smallskip

We conclude this article  in Section \ref{SEC:Examples} with a couple of explicit examples for port-Hamiltonian operators on concrete infinite graphs to illustrate our abstract findings.

%%%%%%%%%%%%%%%%%%%%%%%%%%%%%%%%%%%%%%%%%%%%%%%%%
%%%%%%%%%%%%%%%%%%%%%%%%%%%%%%%%%%%%%%%%%%%%%%%%%
%%                                             %%
%% 2 Dissipative extensions                    %%
%%                                             %%
%%%%%%%%%%%%%%%%%%%%%%%%%%%%%%%%%%%%%%%%%%%%%%%%%
%%%%%%%%%%%%%%%%%%%%%%%%%%%%%%%%%%%%%%%%%%%%%%%%%

\section{Dissipative extensions via boundary systems}\label{SEC:2}

\smallskip

Let $X$ be a Hilbert space and let $H_0\colon D(H_0)\subseteq{}X\rightarrow{}X$ be a densely defined, closed, skew-symmetric operator. In particular we have $H_0\subseteq -H_0^\star$. A boundary system, cf.~Schubert et al.~\cite{BS}, for $H_0$ is a quintuplet $(\Omega, \mathcal{G}_1,\mathcal{G}_2, F, \omega)$, consisting of two Hilbert spaces $\mathcal{G}_1$, $\mathcal{G}_2$, two sequilinear forms $\Omega\colon \mathcal{H}\oplus\mathcal{H}\times\mathcal{H}\oplus\mathcal{H} \rightarrow\mathbb{C}$, $\omega\colon\mathcal{G}_1\oplus \mathcal{G}_2\times\mathcal{G}_1\oplus \mathcal{G}_2\rightarrow\mathbb{C}$ and a linear and surjective map $F\colon \gr(H_0^\star)\to \mathcal{G}_1\oplus \mathcal{G}_2$ such that
$$
\Omega((x,H_0^\star x),(y,H_0^\star y))=\omega(F(x,H_0^\star x),F(y,H_0^\star y))
$$
holds for all $x$, $y\in D(H_0^\star)$. In \cite{BTBS} the authors established that for \emph{any} skew-symmetric operator there exists a canonical boundary system, where $\Omega$ is the standard symmetric and $\omega$ the standard unitary form, that is,
$$
\Omega((x,y),(u,v))=\langle{}x,v\rangle{}_X+\langle{}y,u\rangle{}_X \;\text{ and }\; \omega((x,y),(u,v))=\langle{}x,u\rangle{}_{\mathcal{G}_1}-\langle{}y,v\rangle{}_{\mathcal{G}_2}
$$
holds for $x,y,u,v\in\mathcal{H}$, resp.~for $x,u\in\mathcal{G}_1$, $y,v\in\mathcal{G}_2$. We denote by $F_i\colon D(H_0^{\star})\rightarrow \mathcal{G}_i$ the maps given by $F_ix=\pr_i(F(x,H_0^{\star}x))$ for $i\in\{1,2\}$, where $\pr_i\colon\mathcal{G}_1\oplus\mathcal{G}_2\rightarrow\mathcal{G}_i$ denotes the canonical projection.

\medskip

The following theorem classifies all maximal dissipative extensions of $-H_0$ using the boundary system. We recall that $H\colon D(H)\subseteq X\to X$ is called maximal dissipative in a Hilbert space $X$, if for all dissipative operators $K$ extending $H$ we have that $K=H$. An operator $H$ is dissipative, if $\Re\langle Hx,x\rangle\leqslant 0$ for all $x\in D(H)$. We recall moreover that a dissipative extension of $-H_0$ is automatically a restriction of $H_0^{\star}$, cf.~Wegner \cite[Proposition 2.8]{BT}. The method of proof of the following result is akin to the proof of \cite[Theorem 4.2]{BT}.

\medskip

\begin{thm}\label{CLASS-THM} Let $H_0$ be as above and let $(\Omega,\mathcal{G}_1,\mathcal{G}_2,F,\omega)$ be a boundary system for $H_0$ in which $\Omega$ is the standard symmetric form and $\omega$ is the standard unitary form. Then $H$ is a maximal dissipative extension of $-H_0$ and restriction of $H_0^\star$, if, and only if, there exists a contraction $T\colon \mathcal{G}_2\to \mathcal{G}_1$ such that\[
D(H)= \{ x\in D(H_0^\star)\:;\:F_1(x)=TF_2(x)\}.
\]
\end{thm}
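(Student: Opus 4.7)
The plan is to reduce the classification to an abstract classification of maximal subspaces of $\mathcal{G}_1\oplus \mathcal{G}_2$ on which a certain indefinite form is non-positive, and then to identify those maximal subspaces as graphs of contractions.

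First, I would extract the key Green-type identity implied by the boundary system. Setting $x=y$ in the intertwining property, and using that $\Omega$ is the standard symmetric form and $\omega$ the standard unitary form, gives
\[
2\Re\langle H_0^\star x, x\rangle = \|F_1 x\|_{\mathcal{G}_1}^2 - \|F_2 x\|_{\mathcal{G}_2}^2
\]
for every $x\in D(H_0^\star)$. Since any dissipative extension of $-H_0$ is automatically a restriction of $H_0^\star$ (Wegner, \cite[Proposition 2.8]{BT}), and since from the boundary system setup one has $D(H_0)=\ker F$, the surjective linear map $F$ sets up a bijective correspondence between subspaces $D(H_0)\subseteq D\subseteq D(H_0^\star)$ and subspaces $M\subseteq \mathcal{G}_1\oplus\mathcal{G}_2$ via $D\mapsto M:=\{(F_1x,F_2x):x\in D\}$ and $M\mapsto F^{-1}(M)$. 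The Green identity shows that $H:=H_0^\star|_D$ is dissipative (and extends $-H_0$) if and only if $\|a\|^2\leqslant \|b\|^2$ for all $(a,b)\in M$. In particular, $H$ is maximal dissipative if and only if $M$ is maximal among subspaces with this property.

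\textbf{Sufficiency.} Given a contraction $T\colon\mathcal{G}_2\to\mathcal{G}_1$, the subspace $M_T:=\{(Tb,b):b\in\mathcal{G}_2\}$ satisfies $\|a\|^2=\|Tb\|^2\leqslant\|b\|^2$, hence $H:=H_0^\star|_{F^{-1}(M_T)}$ is dissipative and extends $-H_0$. For maximality, assume $M'\supsetneq M_T$ still satisfies $\|a\|^2\leqslant\|b\|^2$, and pick $(a_0,b_0)\in M'\setminus M_T$. Since $(Tb_0,b_0)\in M_T\subseteq M'$, the element $(a_0-Tb_0,0)$ lies in $M'$; because $a_0\neq Tb_0$, this violates $\|\cdot\|^2\leqslant\|\cdot\|^2$, a contradiction.

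\textbf{Necessity.} Let $H$ be maximal dissipative and $M$ the associated subspace. The estimate $\|a\|\leqslant\|b\|$ on $M$ makes the projection $\pi_2\colon M\to\mathcal{G}_2,\ (a,b)\mapsto b$ injective, so $M$ is the graph of a linear map $T\colon D(T)\to\mathcal{G}_1$ with $D(T)=\pi_2(M)\subseteq\mathcal{G}_2$, and $\|Tb\|\leqslant\|b\|$ shows $T$ is a contraction. To see $D(T)=\mathcal{G}_2$, extend $T$ first by continuity to $\overline{D(T)}$ and then by the zero map on $\overline{D(T)}^\perp$ to a contraction $\widetilde T\colon\mathcal{G}_2\to\mathcal{G}_1$; the graph of $\widetilde T$ is then a dissipative subspace containing $M$, and by maximality of $M$ it must equal $M$, forcing $D(T)=\mathcal{G}_2$.

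The main conceptual step, and the one requiring the most care, is the translation between the operator-theoretic notion of maximality on $D(H)\subseteq D(H_0^\star)$ and the linear-algebraic maximality of $M\subseteq \mathcal{G}_1\oplus\mathcal{G}_2$; this relies crucially on the surjectivity of $F$ (to transfer extensions) together with $D(H_0)=\ker F$ (to ensure that restricting to $F^{-1}(M)$ recovers exactly $D(H)$). Once this correspondence is set up, the contraction parametrization is essentially the standard description of maximal non-positive subspaces of a Krein-type space with signature $(\dim\mathcal{G}_1,\dim\mathcal{G}_2)$.
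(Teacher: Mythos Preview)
Your proof is correct and follows essentially the same approach as the paper's. Both directions use the identical key moves: in the sufficiency direction, subtract $(Tb_0,b_0)$ from a purported larger element to produce a nonzero vector with vanishing $\mathcal{G}_2$-component, violating the non-positivity condition; in the necessity direction, define $T$ on $F_2[D(H)]$, extend by continuity to the closure and by zero on the orthogonal complement, then invoke maximality to conclude equality. The only difference is cosmetic: you make the bijection $D\leftrightarrow M=F(D)$ explicit and phrase maximality at the level of subspaces of $\mathcal{G}_1\oplus\mathcal{G}_2$, while the paper carries the argument directly on operators; your extra input $D(H_0)=\ker F$ is a standard consequence of the boundary system axioms (via surjectivity of $F$ and non-degeneracy of $\omega$) and is implicit in the paper's argument as well.
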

\begin{proof} \textquotedblleft{}$\Rightarrow$\textquotedblright{} Let $H$ be a maximal dissipative extension of $-H_0$. By \cite[Proposition 2.8]{BT} it follows $H\subseteq H_0^{\star}$. We thus can compute
\begin{eqnarray*}
0 &\geqslant & 2\Re \langle x,Hx\rangle_{\mathcal{H}}\\
  & = & \Omega((x,H_0^{\star}x),(x,H_0^{\star}x))\\
  & = & \omega(F(x,H_0^{\star}x),F(x,H_0^{\star}x))\\
  & = & \langle{}F_1(x),F_1(x)\rangle{}_{\mathcal{G}_1}-\langle{}F_2(x),F_2(x)\rangle{}_{\mathcal{G}_2}
\end{eqnarray*}
for all $x\in D(H)$. Hence, for all $x\in D(H)$ we have
\begin{equation}\label{eq:T}
\|F_1(x)\|_{\mathcal{G}_1}\leqslant \|F_2(x)\|_{\mathcal{G}_2}.
\end{equation}
We define $T\colon F_2[D(H)]\subseteq\mathcal{G}_2\to \mathcal{G}_1$ via $F_2(x)\mapsto F_1(x)$ for all $x\in D(H)$, which is well-defined by \eqref{eq:T}. Since $H$ and $F$ are linear we get that $T$ is linear. From \eqref{eq:T} it follows that $T$ is a contraction. Since $T$ is in particular continuous and $\mathcal{G}_1$ is complete, we can extend $T$ from $F_2[D(H)]$ to $\overline{F_2[D(H)]}$. Finally, we put $Tx:= 0$ for all $x\in F_2[D(H)]^{\bot_{\mathcal{G}_2}}$. Thus, $T$ is a contraction and defined on the whole of $\mathcal{G}_2$. Note that
$$
D(H)\subseteq \{ x\in D(H_0^\star)\:;\:F_1(x)=TF_2(x)\} =: \tilde{D}
$$
holds and that $\tilde{H}:= H_0^\star|_{\tilde{D}}$ is dissipative. Thus, $H\subseteq\tilde{H}$ is a dissipative extension, which implies $\tilde H=H$ by the maximality of $H$. 

\medskip

\textquotedblleft{}$\Leftarrow$\textquotedblright{} For the other direction, let $T\colon \mathcal{G}_{2}\to \mathcal{G}_{1}$ be a contraction. Then $H\colon D(H)\subseteq X\rightarrow X$ given by
$$
Hx=H_0^{\star}x \;\text{ for }\; x\in D(H)= \{ y\in D(H_0^\star) \:;\: F_1(y)=TF_2(y)\}
$$
is dissipative since
\begin{eqnarray*}
2\Re\langle{}x,Hx\rangle{}_X & = & \langle{}x,H_0^{\star}x\rangle{}_X+ \langle{}H_0^{\star}x,x\rangle{}_X\\
& = & \langle{}F_1(x),F_1(x)\rangle{}_{\mathcal{G}_1}  - \langle{}F_2(x),F_2(x)\rangle{}_{\mathcal{G}_2}\\
& = & \|TF_2(x)\|_{\mathcal{G}_1}^{2} - \|F_1(x)\|_{\mathcal{G}_2}^{2} \leqslant 0
\end{eqnarray*}
holds for each $x\in D(H)$. It remains to show that $H$ is maximal dissipative. Let $H\subset\tilde{H}\subseteq H_0^\star$ be a proper extension of $H$. Let $x \in D(\tilde{H})\setminus D(H)$. By the surjectivity of $F$ we find $y\in D(H_0^{\star})$ such that $F(y)=(TF_2(x),F_2(x))$. This means $F_1(y)=TF_2(x)$ and $F_2(y)=F_2(x)$. It follows $F_1(y)=TF_2(x)=TF_2(y)$ and therefore $y$ belongs to $D(H)$. Since $x\notin D(H)$, we obtain $F_1(x)\neq F_1(y)$. Since $\tilde{H}$ and $F$ are linear, we infer $0\neq x-y\in D(\tilde{H})$ and $F_2(x-y)=0$. We compute
\begin{eqnarray*}
\Re \langle(x-y),H_0^\star(x-y)\rangle_{\mathcal{H}} & = & \Omega\left(((x-y),H_0^\star(x-y)),((x-y),H_0^\star(x-y))\right)\\
& = & \omega(F(((x-y),H_0^\star(x-y)),F((x-y),H_0^\star(x-y)))\\
& = & \left\langle{}F_1(x-y),F_1(x-y)\right\rangle{}_{\mathcal{G}_1}-\left\langle{}F_2(x-y),F_2(x-y)\right\rangle{}_{\mathcal{G}_2}\\
& = & \left\langle{}F_1(x-y),F_1(x-y\rangle{}\right\rangle{}_{\mathcal{G}_1}>0
\end{eqnarray*}
which shows that $\tilde{H}$ is not dissipative. Hence, $H$ is maximal dissipative.
\end{proof}

\bigskip

%%%%%%%%%%%%%%%%%%%%%%%%%%%%%%%%%%%%%%%%%%%%%%%%%%%%%%%%%%%%%%%%%%%%%%%%%%%%%%%%%%%%%%%%%%%%%
%%%%%%%%%%%%%%%%%%%%%%%%%%%%%%%%%%%%%%%%%%%%%%%%%%%%%%%%%%%%%%%%%%%%%%%%%%%%%%%%%%%%%%%%%%%%%
%%                                                                                         %%
%% 3 Minty                                                                                 %%                                                                                          %%                                                                                         %%
%%%%%%%%%%%%%%%%%%%%%%%%%%%%%%%%%%%%%%%%%%%%%%%%%%%%%%%%%%%%%%%%%%%%%%%%%%%%%%%%%%%%%%%%%%%%%
%%%%%%%%%%%%%%%%%%%%%%%%%%%%%%%%%%%%%%%%%%%%%%%%%%%%%%%%%%%%%%%%%%%%%%%%%%%%%%%%%%%%%%%%%%%%%

\section{Minty's Theorem}\label{sec:Minty}

In the next chapter we will study maximal dissipative operators on weighted Hilbert spaces. For this we need Propostion \ref{prop:mdr}. We emphasize that we bypass in our proofs Minty's \cite{Minty} celebrated characterization of maximal monotone relations in Hilbert spaces, see also Trostorff \cite[Theorem 2.3]{Trostorff2014}. This is because we want to avoid relations in this article. We shall, however, conclude this section with a proof of Minty's theorem in the present context, see Theorem \ref{thm:Minty}. 

\smallskip

We point out that Proposition \ref{prop:mdr} is of its own interest due to the following fact: In a nutshell, it says that densely defined maximal dissipative \emph{operators} are maximal monotone \emph{relations}.

\smallskip

\begin{prop}\label{prop:mdr} Let $X$ be a Hilbert space and $H\colon D(H)\subseteq X\to X$ be a linear, densely defined and maximal dissipative operator. Let $(x,y)\in X\times X$ be given. Assume that $\Re \langle x-u,y-Hu\rangle \leqslant 0$ holds for all $u\in D(H)$. Then $x\in D(H)$ and we have $y=Hx$.
\end{prop}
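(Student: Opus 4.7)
The plan is to argue by contradiction: assume $(x,y)\notin\gr(H)$ and derive a contradiction with the maximal dissipativity of $H$. The argument splits naturally into two cases depending on whether $x$ already lies in $D(H)$; adding $(x,y)$ to the graph of $H$ produces a single-valued linear extension precisely when $x\notin D(H)$, so the two alternatives have to be handled by different means.

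\textbf{Case $x\notin D(H)$.} I plan to construct a proper dissipative extension of $H$, contradicting maximality. Set $D(\tilde H):=D(H)+\mathbb{C}x$ and $\tilde H(u+\alpha x):=Hu+\alpha y$; since $x\notin D(H)$ the decomposition is unique, so $\tilde H$ is a well-defined linear operator strictly extending $H$. The core step is the dissipativity check: for $v=u+\alpha x$ with $\alpha\neq 0$ I rewrite $v=\alpha(x-w)$ with $w:=-u/\alpha\in D(H)$, so that $\tilde Hv=\alpha(y-Hw)$ and
\[
\Re\langle \tilde Hv,v\rangle=|\alpha|^{2}\,\Re\langle x-w,y-Hw\rangle\leqslant 0
\]
directly by the standing hypothesis; the case $\alpha=0$ is dissipativity of $H$.

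\textbf{Case $x\in D(H)$.} Here I test the hypothesis with $u=x+tw$ for arbitrary $w\in D(H)$ and $t>0$, which gives
\[
0\geqslant\Re\langle -tw,\,y-Hx-tHw\rangle=-t\,\Re\langle w,y-Hx\rangle+t^{2}\,\Re\langle w,Hw\rangle.
\]
Dividing by $t$ and letting $t\downarrow 0$ yields $\Re\langle w,y-Hx\rangle\geqslant 0$; applying this successively to $-w$ and to $\pm iw$ forces $\langle w,y-Hx\rangle=0$ for every $w\in D(H)$, hence $y=Hx$ by density of $D(H)$, contradicting the assumption. The main obstacle is the first case, where the rescaling $v=\alpha(x-w)$ is the crucial algebraic move that converts the assumed inequality exactly into what is needed to bound $\Re\langle \tilde Hv,v\rangle$; without it there is no obvious reason that adjoining $(x,y)$ to the graph should preserve dissipativity.
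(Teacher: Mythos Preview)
Your proof is correct, but it follows a different path from the paper's. The paper first isolates the fact that $0\in\rho(1-H)$ as a separate lemma (proved, incidentally, by an extension construction very close to your Case~1 argument, applied to a vector orthogonal to $\ran(1-H)$). With that in hand, the paper's proof of the proposition is a two-line computation: set $w\coloneqq(1-H)^{-1}(x-y)$, observe that $x-w=y-Hw$, and plug $u=w$ into the hypothesis to get $\|y-Hw\|^2\leqslant0$, hence $y=Hw$ and $w=x$.

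Your route is more self-contained: you bypass the resolvent entirely and go straight for a proper dissipative extension when $x\notin D(H)$, then handle $x\in D(H)$ by a scaling-and-limit argument. This has the advantage of not needing any auxiliary lemma. The paper's approach, on the other hand, extracts $0\in\rho(1-H)$ as an independently useful fact (it is reused later, e.g.\ in the proof of Minty's theorem and in Lemma~\ref{lem:ele}), and once that lemma is available the proposition falls out almost for free. A minor stylistic point: your Case~2 does not actually need the contradiction framing---it directly establishes $y=Hx$, so you could drop the reductio ad absurdum and simply treat Case~1 as ``$x\notin D(H)$ is impossible'' and Case~2 as ``if $x\in D(H)$ then $y=Hx$''.
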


\red{\begin{rem} Note that in Proposition \ref{prop:mdr}, the condition that $H$ is densely defined cannot be dropped. Indeed, by \cite[p 201]{Phillips1959}, there exists a linear, maximal dissipative operator, which is not closed. Moreover, the condition
\[
\forall (x,y)\in X\times X\colon \bigl[\,\forall\; u\in D(H): \Re \langle x-u,y-Hu\rangle \leqslant 0\;\Longrightarrow  x\in D(H) \text{ and } y=Hx\,\bigr]
\]
for some linear, dissipative operator $H\colon D(H)\subseteq X\to X$ in some Hilbert space $X$ implies that ${H}$ is closed. This however contradicts the existence of the counterexample in \cite[p.\ 201]{Phillips1959}.
\end{rem}}

\smallskip

The proof of Proposition~\ref{prop:mdr} requires the following two lemmas. Lemma \ref{lem:mdr1} was formulated explicitly by Beyer \cite[Theorem 4.2.5]{Beyer2007} in a slightly different notation. For the convenience of the reader we provide both proofs.

\smallskip

\begin{lem} \label{lem:mdr1} Let $X$ be a Hilbert space and $H\colon D(H)\subseteq X\to X$ be a linear and densely defined operator. If $H$ is dissipative, then $H$ is closable. If $H$ is maximal dissipative, then $H$ is closed.
\end{lem}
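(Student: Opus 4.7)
The plan is to treat the two claims in turn. For closability, I would verify the sequential criterion: any sequence $(x_n)\subseteq D(H)$ with $x_n\to 0$ and $Hx_n\to y\in X$ forces $y=0$. The idea is to exploit dissipativity on the perturbed vectors $u+\lambda x_n\in D(H)$, where $u\in D(H)$ is arbitrary and $\lambda\in\mathbb{R}$. Expanding $\Re\langle H(u+\lambda x_n),u+\lambda x_n\rangle\leqslant 0$ and passing to the limit $n\to\infty$---using that $\langle Hx_n,x_n\rangle\to 0$ because $(Hx_n)$ is norm-bounded while $x_n\to 0$, that $\langle Hu,x_n\rangle\to 0$, and that $\langle Hx_n,u\rangle\to\langle y,u\rangle$---collapses the inequality to
\[
\Re\langle Hu,u\rangle + \lambda\,\Re\langle y,u\rangle \leqslant 0 \quad\text{for every } \lambda\in\mathbb{R}.
\]
Letting $\lambda$ range over $\mathbb{R}$ forces $\Re\langle y,u\rangle=0$ for every $u\in D(H)$; density of $D(H)$ extends this to $\Re\langle y,v\rangle = 0$ for all $v\in X$, and the choice $v=y$ yields $y=0$.

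For the closedness claim I would combine the first assertion with maximality. The closure $\overline{H}$ exists by the previous step and extends $H$. A routine passage to the limit in $\Re\langle Hx_n,x_n\rangle\leqslant 0$ along approximating sequences for elements of $D(\overline{H})$ shows that $\overline{H}$ is still dissipative. Since $H$ is maximal dissipative and $\overline{H}\supseteq H$ is a dissipative extension, maximality forces $\overline{H}=H$, i.e., $H$ is closed.

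The only mildly delicate step is arranging the closability argument so that the limit is clean---in particular, recognising that the quadratic cross term $\lambda^2\langle Hx_n,x_n\rangle$ vanishes and that the mixed terms behave as claimed. Everything else, namely linearity, density, and the definition of maximality, is routine; in particular, no appeal to Minty's theorem or to Proposition \ref{prop:mdr} is needed for this lemma.
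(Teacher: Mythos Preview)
Your proof is correct. For the second assertion (closedness from maximality) you argue exactly as the paper does: the closure $\overline{H}$ is a dissipative extension, so maximality forces $\overline{H}=H$.

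For the first assertion (closability) you take a genuinely different route. The paper argues by contradiction: assuming non-closability, it picks $(\phi_n)\subseteq D(H)$ with $\phi_n\to 0$ and $(1-H)\phi_n\to\psi\neq 0$, chooses $\zeta\in D(H)$ close to $\psi$, and then applies the resolvent-type estimate $\|x\|\leqslant\|x-\lambda Hx\|$ (a reformulation of dissipativity) to $x=\zeta-\frac{1}{\beta}\phi_n$, letting first $n\to\infty$ and then $\beta\to 0$ to reach a contradiction. Your argument is instead a direct verification of the sequential closability criterion via a quadratic expansion of $\Re\langle H(u+\lambda x_n),u+\lambda x_n\rangle\leqslant 0$, with the parameter $\lambda\in\mathbb{R}$ exploited only \emph{after} passing to the limit in $n$. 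Your approach is arguably more transparent---no auxiliary element $\zeta$, no double limit---while the paper's approach has the virtue of isolating the inequality $\|(\mu-H)x\|\geqslant\mu\|x\|$, which is useful elsewhere in the article (e.g.\ in showing that $1-\overline{H}$ has closed range). Both are standard and self-contained; neither invokes Minty's theorem or Proposition~\ref{prop:mdr}.
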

\begin{proof} For the first part we assume that $H$ is not closable. Then there exists a sequence $(\phi_n)_{n\in\mathbb{N}}$ in $D(H)$ tending to $0$ with $((1-H)\phi_n)_{n\in\mathbb{N}}$ converging to some non-zero $\psi\in X$. W.l.o.g.~we may assume that $\|\psi\|=1$ holds. Since $D(H)$ is dense in $X$, we find $\zeta\in D(H)$ such that $\|\psi-\zeta\|<1/2$. Thus, $\|\zeta\|>1/2$. For $\beta>0$ and $n\in \mathbb{N}$ we compute
\[
  (\beta+1)\| \zeta - \frac{1}\beta \phi_n\| \leqslant \| \zeta - \frac1\beta \phi_n +\beta (1-H)(\zeta -\frac1\beta\phi_n) \| = \| \zeta -\frac1\beta\phi_n + \beta (1-H) \zeta - (1-H)\phi_n\|.
\]
where we used that $H$ is dissipative. Letting $n\to\infty$ and then $\beta\to 0$ yields
\[
      \frac{1}{2}<\|\zeta\|\leqslant \| \zeta -\psi \| < \frac12
\]
and therefore a contradiction. It follows that $H$ is closable. For the second part we firstly observe that the closure $\overline{H}$ of a dissipative operator $H$ is also dissipative. Indeed, for $x\in D(\overline{H})$ let $(x_n)_{n\in\mathbb{N}}$ in $D(H)$ be such that $x_n\rightarrow x$ and $Hx_n\rightarrow\overline{H}x$ hold in $X$. Then it follows
\[
   \Re \langle x, \overline{H}x \rangle = \lim_{n\to\infty}\Re \langle x_n,Hx_n\rangle \leqslant 0.
\]
We therefore have $H\subseteq\overline{H}$ and if $H$ is maximal dissipative we obtain $H=\overline{H}$ and $H$ is closed.
\end{proof}

\smallskip

\begin{lem}\label{lem:mdr2} Let $X$ be a Hilbert space and $H\colon D(H)\subseteq X \to X$  be a linear, densely defined and maximal dissipative operator. Then $0 \in \rho(1-H)$.
\end{lem}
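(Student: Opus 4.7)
The plan is to split the assertion $0\in\rho(1-H)$ into a routine part---injectivity of $1-H$ together with closedness of its range and boundedness of the inverse---and the more delicate surjectivity of $1-H$, which will be obtained from the maximality of $H$ by manufacturing a proper dissipative extension.

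For the routine part, expanding
\[
\|(1-H)x\|^2=\|x\|^2-2\Re\langle x,Hx\rangle+\|Hx\|^2
\]
and using dissipativity immediately gives $\|(1-H)x\|\geqslant\|x\|$ for every $x\in D(H)$. This shows that $1-H$ is injective with bounded inverse on its range $R:=\ran(1-H)$. Since $H$ is closed by Lemma~\ref{lem:mdr1}, the operator $1-H$ is closed, and combining closedness with the above norm estimate readily yields that $R$ is closed.

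For surjectivity I would argue by contradiction. Assuming $R\neq X$, pick $y_0\in R^{\perp}\setminus\{0\}$. The orthogonality $\langle y_0,(1-H)x\rangle=0$ for all $x\in D(H)$, together with the density of $D(H)$, shows that $x\mapsto\langle Hx,y_0\rangle=\langle x,y_0\rangle$ is bounded on $D(H)$; hence $y_0\in D(H^\star)$ with $H^\star y_0=y_0$. A first key observation is that $y_0\notin D(H)$: if it were, one would have $\Re\langle Hy_0,y_0\rangle=\Re\langle y_0,H^\star y_0\rangle=\|y_0\|^2>0$, contradicting dissipativity of $H$.

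The main obstacle is then to construct a proper dissipative extension of $H$ containing $y_0$ in its domain, which will contradict maximality. I would define $\tilde H$ on $D(\tilde H):=D(H)\oplus\mathbb{C}\hspace{0.5pt}y_0$ (a genuine direct sum because $y_0\notin D(H)$) by $\tilde H(x+\alpha y_0):=Hx-\alpha y_0$. The essential check is dissipativity of $\tilde H$: when expanding $\langle\tilde H(x+\alpha y_0),x+\alpha y_0\rangle$, the cross terms $\bar\alpha\langle Hx,y_0\rangle-\alpha\langle y_0,x\rangle$ become $\bar\alpha\langle x,y_0\rangle-\alpha\langle y_0,x\rangle$ by $H^\star y_0=y_0$, and this expression is purely imaginary and thus disappears under $\Re$. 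What remains is $\Re\langle Hx,x\rangle-|\alpha|^2\|y_0\|^2\leqslant 0$, delivering the sought contradiction and hence the surjectivity of $1-H$.
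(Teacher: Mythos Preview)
Your proof is correct and follows essentially the same route as the paper's: both establish the estimate $\|(1-H)x\|\geqslant\|x\|$ from dissipativity (giving injectivity and closed range via Lemma~\ref{lem:mdr1}), and both rule out a nonzero $y_0\in\ran(1-H)^{\perp}$ by constructing the very same proper dissipative extension $\tilde H(x+\alpha y_0)=Hx-\alpha y_0$. The only cosmetic difference is that you phrase the orthogonality relation as $H^\star y_0=y_0$, whereas the paper works directly with $\langle y_0,(1-H)u\rangle=0$ in the dissipativity check of the extension.
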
\red{
\begin{proof} Let $y\in X$ satisfy
\[
  \langle y, (1-H)u\rangle =0\quad  \mbox{for all } u \in D(H).
\]
If $y\in D(H)$, then  $y=0$ by choosing $u=y$ and using the dissipativity of $H$. So assume that $y \not\in D(H)$. Then  define 
\[
  H_e(\alpha y + u) = H(u)-\alpha y, \quad u \in D(H), \alpha \in {\mathbb C}.
\]
Now
\begin{align*}
  \Re\langle \alpha y + u,  H_e(\alpha y + u)\rangle =&\Re \langle \alpha y + u, H(u)-\alpha y \rangle\\
  =& \Re(   \langle \alpha y, H(u)- u + u -\alpha y \rangle + \langle u, H(u) -\alpha y \rangle )\\
  =&\ 0 + \Re( \langle \alpha y, u -\alpha y \rangle + \langle u, H(u) \rangle +  \langle u, -\alpha y \rangle )\\
  \leqslant &\ \Re(  \langle \alpha  y, u -\alpha y \rangle - \langle u, \alpha y \rangle )\leqslant 0,
\end{align*}
So $H_e$ is a proper dissipative extension of $H$ contradicting the maximality of $H$.
Hence, $1-H$ has dense range. Moreover, since $H$ is closed by Lemma \ref{lem:mdr1}, it follows from the dissipativity of $H$, that the range of $1-H$ is also closed. Thus, $1-H$ is onto; hence $0\in \rho(1-H)$.
\end{proof}}

\smallskip

We can now conclude the proof of Proposition~\ref{prop:mdr}.

\begin{proof}[Proof of Proposition~\ref{prop:mdr}] By Lemma~\ref{lem:mdr2}, we may define $w\coloneqq (1-H)^{-1}(x-y)\in D(H)$. Then 
\[
   w-Hw = x-y
\]
and by our assumption
\[
    \|y-Hw\|^2 = \Re \langle y-Hw,y-Hw\rangle = \Re \langle x-w,y-Hw \rangle \leqslant 0
\]
follws. We conclude $y=Hw$ and therefore 
\[
w=w-Hw+Hw=x-y+Hw = x
\]
which completes the proof.
\end{proof}

\smallskip

For the sake of completeness let us give the following version of Minty's theorem and a short proof based on our work above.

\smallskip

\begin{thm}\label{thm:Minty} \textbf{(Minty)} Let $X$ be a Hilbert space, $H\colon D(H)\subseteq X\to X$ linear, densely defined and dissipative. Then the following conditions are equivalent.
\begin{enumerate}
\item[(i)] $H$ is maximal dissipative.\vspace{2pt}
\item[(ii)] $1-H$ is onto.
\end{enumerate}
\end{thm}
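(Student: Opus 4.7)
The implication (i)$\Rightarrow$(ii) is exactly Lemma \ref{lem:mdr2}: if $H$ is linear, densely defined and maximal dissipative, then $0\in\rho(1-H)$ and in particular $1-H$ is surjective. So the plan for this direction is simply to invoke Lemma \ref{lem:mdr2}.

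For (ii)$\Rightarrow$(i) the plan is as follows. Let $\tilde H$ be any dissipative extension of $H$; I want to show $\tilde H=H$. Pick $x\in D(\tilde H)$ arbitrary. Using the surjectivity of $1-H$, I choose $w\in D(H)$ with $(1-H)w=(1-\tilde H)x$. Since $\tilde H$ extends $H$, we have $\tilde H w=Hw$, and therefore $(1-\tilde H)(x-w)=0$, i.e., $\tilde H(x-w)=x-w$ with $x-w\in D(\tilde H)$. Dissipativity of $\tilde H$ then gives
\[
0\;\geqslant\;\Re\langle x-w,\tilde H(x-w)\rangle\;=\;\|x-w\|^{2},
\]
forcing $x=w\in D(H)$. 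Hence $D(\tilde H)\subseteq D(H)$ and $\tilde H=H$, proving maximal dissipativity.

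The argument is essentially structural; the only mild subtlety is making sure the two ingredients of the second direction (surjectivity of $1-H$ and dissipativity of the extension) are combined correctly. There is no genuine obstacle: once one has a preimage $w$ of $(1-\tilde H)x$ inside $D(H)$, the identity $\tilde H(x-w)=x-w$ together with $\Re\langle x-w,x-w\rangle\leqslant 0$ is what finishes everything. No appeal to the full theory of monotone relations, and no use of Proposition \ref{prop:mdr}, is needed, so the proof is short and self-contained given Lemma \ref{lem:mdr2}.
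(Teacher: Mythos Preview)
Your proof is correct and follows essentially the same strategy as the paper: for (i)$\Rightarrow$(ii) you both invoke Lemma~\ref{lem:mdr2}, and for (ii)$\Rightarrow$(i) you both exploit that dissipativity makes $1-(\text{extension})$ injective so that the surjectivity of $1-H$ forces the extension to coincide with $H$. Your version is in fact slightly more streamlined than the paper's, which first passes to the closure $\overline{K}$ via Lemma~\ref{lem:mdr1} before arguing $1-H=1-\overline{K}$; you work directly with $\tilde H$ on the element $x-w$ and never need closability.
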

\begin{proof}(i)$\Rightarrow$(ii) This is Lemma \ref{lem:mdr2}.

\smallskip

(ii)$\Rightarrow$(i) Let $K\supseteq H$ be a dissipative operator. By Lemma \ref{lem:mdr1} we obtain that $K$ is closable. Moreover, $1-H\subseteq 1-\overline{K}$. Since the left-hand side operator is onto and due to dissipativity of $\overline{K}$, the right-hand side operator is one-to-one, we deduce $1-H = 1-\overline{K}$. Hence, $\overline{K}=H \subseteq K$ implying $K=H$. This shows (i).
\end{proof}

%%%%%%%%%%%%%%%%%%%%%%%%%%%%%%%%%%%%%%%%%%%%%%%%%%%%%%%%%%%%%%%%%%%%%%%%%%%%%%%%%%%%%%%%%%%%%
%%%%%%%%%%%%%%%%%%%%%%%%%%%%%%%%%%%%%%%%%%%%%%%%%%%%%%%%%%%%%%%%%%%%%%%%%%%%%%%%%%%%%%%%%%%%%
%%                                                                                         %%
%% 3 Maximal dissipative operators on weighted spaces - bounded, strictly positive weights %%                                                                                          %%                                                                                         %%
%%%%%%%%%%%%%%%%%%%%%%%%%%%%%%%%%%%%%%%%%%%%%%%%%%%%%%%%%%%%%%%%%%%%%%%%%%%%%%%%%%%%%%%%%%%%%
%%%%%%%%%%%%%%%%%%%%%%%%%%%%%%%%%%%%%%%%%%%%%%%%%%%%%%%%%%%%%%%%%%%%%%%%%%%%%%%%%%%%%%%%%%%%%

\section{Maximal dissipative operators on weighted spaces}\label{sec:mdo}

Having discussed possible maximal dissipative extensions of skew-symmetric operators in Section \ref{SEC:2}, we will now discuss (essentially) maximal dissipative operators on weighted Hilbert spaces. We start with a \emph{uniformly finite weight} $\mathcal{H}\colon X\rightarrow X$, i.e., $\mathcal{H}$ is bounded and bounded away from zero. Later we will relax these assumptions. \red{In the sequel, $L(X)$ denotes the space of bounded linear operators from $X$ into itself.}

\smallskip

\subsection{Uniformly bounded weights}\label{SEC:4:1}

\smallskip

\red{To start with, we rephrase \cite[7.2.3]{JZ} in our notation. We mention that this result can also be found in \cite[Lemma 5.1]{Trostorff2014}.}

\begin{thm}\label{thm:mdw} Let $X$ be a Hilbert space and $\mathcal{H}\in L(X)$ be self-adjoint with $\mathcal{H}\geqslant c>0$ for some $c>0$ in the sense of positive definiteness. Let $H\colon D(H)\subseteq X\to X$ be linear. Then the following conditions are equivalent.\vspace{3pt}
\begin{compactitem}
  \item[(i)] $H$ is densely defined and maximal dissipative in $X$.\vspace{3pt}
  \item[(ii)] $H\mathcal{H}$ is densely defined and maximal dissipative in $X_\mathcal{H} \coloneqq (X,\langle \cdot, \mathcal{H}\cdot\rangle_X)$.
\end{compactitem}
\end{thm}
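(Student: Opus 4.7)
The proof will rest on the observation that, since $\mathcal{H}\in L(X)$ is self-adjoint with $\mathcal{H}\geqslant c>0$, it is a topological isomorphism of $X$ onto itself with bounded inverse $\mathcal{H}^{-1}$. In particular the norms of $X$ and $X_\mathcal{H}$ are equivalent, and the assignment $H\mapsto H\mathcal{H}$ is a bijection between linear operators in $X$ and linear operators in $X_\mathcal{H}$, with inverse $K\mapsto K\mathcal{H}^{-1}$. Explicitly, $D(H\mathcal{H})=\{x\in X : \mathcal{H}x\in D(H)\}=\mathcal{H}^{-1}(D(H))$ and $D(K\mathcal{H}^{-1})=\mathcal{H}(D(K))$. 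The plan is to check that, under this bijection, the three properties \emph{densely defined}, \emph{dissipative}, and \emph{being (properly) extended by another operator} are all preserved, and then to deduce the theorem formally.

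Density is immediate: since $\mathcal{H}^{\pm 1}$ are homeomorphisms of $X$ and the norms of $X$ and $X_\mathcal{H}$ are equivalent, $D(H)$ is dense in $X$ if and only if $\mathcal{H}^{-1}(D(H)) = D(H\mathcal{H})$ is dense in $X_\mathcal{H}$. For dissipativity, with $y:=\mathcal{H}x$ ranging over $D(H)$ as $x$ ranges over $D(H\mathcal{H})$, I would use the elementary computation
\[
\Re\langle H\mathcal{H}x,x\rangle_{X_\mathcal{H}}=\Re\langle H\mathcal{H}x,\mathcal{H}x\rangle_X=\Re\langle Hy,y\rangle_X,
\]
which shows that $H\mathcal{H}$ is dissipative in $X_\mathcal{H}$ if and only if $H$ is dissipative in $X$.

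The main step is to transfer maximality. Rather than going through Minty's theorem (Theorem \ref{thm:Minty})---the range condition for $1-H$ does not convert cleanly into a range condition for $1-H\mathcal{H}$---I would argue directly via extensions. Assume $H$ is maximal dissipative in $X$ and, aiming for a contradiction, suppose $K$ is a proper dissipative extension of $H\mathcal{H}$ in $X_\mathcal{H}$. Set $\tilde{H}:=K\mathcal{H}^{-1}$ on $D(\tilde{H}):=\mathcal{H}(D(K))$. Since $\mathcal{H}$ is injective, $D(\tilde{H})\supsetneq \mathcal{H}(D(H\mathcal{H}))=D(H)$, and the computation above, read in reverse, shows that $\tilde{H}$ is a dissipative extension of $H$ in $X$, contradicting maximality. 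The converse implication is symmetric upon interchanging the roles of $\mathcal{H}$ and $\mathcal{H}^{-1}$, respectively $X$ and $X_\mathcal{H}$.

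As far as obstacles go, the argument is mostly bookkeeping: the crucial point is that multiplication by $\mathcal{H}$ establishes a genuine order-isomorphism between the collections of linear operators on $X$ and on $X_\mathcal{H}$, respecting proper domain inclusions. Once this is made precise, the theorem is a formal consequence of the three preservation properties above, and no appeal to Minty-type range characterizations is needed.
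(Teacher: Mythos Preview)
Your argument is correct, and it takes a genuinely different route from the paper's proof. The paper establishes only (i)$\Rightarrow$(ii) directly (noting that $\mathcal{H}^{-1}$ satisfies the same hypotheses, so the converse follows by symmetry), and for the maximality step it invokes Proposition~\ref{prop:mdr}: given a dissipative extension $H_0\supseteq H\mathcal{H}$ and $x\in D(H_0)$, the paper verifies the inequality $\Re\langle \mathcal{H}x-\mathcal{H}y, H_0x-H\mathcal{H}y\rangle\leqslant 0$ for all $\mathcal{H}y\in D(H)$ and then applies Proposition~\ref{prop:mdr} to conclude $\mathcal{H}x\in D(H)$. This brings in the machinery of Section~\ref{sec:Minty}, in particular the Minty-type characterization of maximal dissipative operators as maximal monotone relations.

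Your approach sidesteps all of that: you observe that $H\mapsto H\mathcal{H}$ is an order-isomorphism (for domain inclusion) between linear operators on $X$ and on $X_\mathcal{H}$ that preserves both density and dissipativity, and maximality then transfers formally. This is more elementary and entirely self-contained; in fact, since Proposition~\ref{prop:mdr} is used in the paper only here, your argument would allow one to decouple Theorem~\ref{thm:mdw} from Section~\ref{sec:Minty} altogether. The paper's route, on the other hand, foregrounds the maximal-monotone-relation viewpoint, which the authors wish to develop for its own sake and which informs the more delicate locally-finite-weight theory in Subsection~\ref{sec:mdo2}.
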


\smallskip

\begin{proof} The operator $\mathcal{H}^{-1}$ is bounded, self-adjoint and $\mathcal{H}^{-1}\geqslant d>0$ holds for some constant $d$. The latter follows from elementary computations. Therefore, it suffices to prove (i)\,$\Rightarrow$\,(ii). For this let us assume that $H$ defines a densely defined, maximal dissipative operator in $X$. Let $x\in D(H\mathcal{H})^\bot$ where the scalar product is computed in $X_\mathcal{H}$. Then for all $y\in D(H\mathcal{H})$ we have
\[
   0 = \langle y,x\rangle_\mathcal{H}= \langle y,\mathcal{H}x\rangle=\langle \mathcal{H} y,x\rangle.
\]
For $z\in D(H)$ we put $y\coloneqq \mathcal{H}^{-1} z$. Then $y\in D(H\mathcal{H})$ holds and we get $\langle z,x\rangle=0$ from the above. Since $D(H)$ is dense in $X$, we deduce $x=0$ and consequently $D(H\mathcal{H})$ is dense in $X_{\mathcal{H}}$. Next, let $x\in D(H\mathcal{H})$ be given. Then
\[
   \Re \langle x,H\mathcal{H} x\rangle_{\mathcal{H}} =    \Re \langle x,\mathcal{H}H\mathcal{H} x\rangle= \Re \langle \mathcal{H} x,H\mathcal{H} x\rangle\leqslant 0
\]
as $H$ is dissipative. It remains to prove the maximality of $H\mathcal{H}$. For this let $H_0\supseteq H\mathcal{H}$ be a dissipative extension. Let $x\in D(H_0)$. Then for all $y\in D(H\mathcal{H})$ we deduce
\[
    0\geqslant \Re \langle x-y,H_0x - H\mathcal{H}y\rangle_\mathcal{H}= \Re \langle \mathcal{H}x-\mathcal{H}y,H_0x - H\mathcal{H}y\rangle.
\]
Note that $\{\mathcal{H}y\;;\;y\in D(H\mathcal{H})\}=D(H)$ since $\mathcal{H}$ is continuously invertible and that $H$ is maximal dissipative by assumption. Therefore we can apply Proposition~\ref{prop:mdr} with $(\mathcal{H}x,H_0x)\in X\times X$ to obtain that $\mathcal{H}x\in D(H)$ holds. It follows $x\in D(H\mathcal{H})$ and we get $H\mathcal{H}x=H_0x$. This shows that $H_0=H\mathcal{H}$ holds and the statement is proved.
\end{proof}

\medskip

%%%%%%%%%%%%%%%%%%%%%%%%%%%%%%%%%%%%%%%%%%%%%%%%%%%%%%%%%%%%%%%%%%%%%%%%%%%%%%%%%%
%%%%%%%%%%%%%%%%%%%%%%%%%%%%%%%%%%%%%%%%%%%%%%%%%%%%%%%%%%%%%%%%%%%%%%%%%%%%%%%%%%
%%                                                                              %%
%% 4 Maximal dissipative operators on weighted spaces -- locally finite weights %%
%%                                                                              %%
%%%%%%%%%%%%%%%%%%%%%%%%%%%%%%%%%%%%%%%%%%%%%%%%%%%%%%%%%%%%%%%%%%%%%%%%%%%%%%%%%%
%%%%%%%%%%%%%%%%%%%%%%%%%%%%%%%%%%%%%%%%%%%%%%%%%%%%%%%%%%%%%%%%%%%%%%%%%%%%%%%%%%

\subsection{Locally finite weights}\label{sec:mdo2}

\smallskip

In Theorem~\ref{thm:mdw} a crucial ingredient is that $\mathcal{H}$ is an isomorphism. In this subsection we will relax this. We use the following terminology.

\smallskip

\begin{dfn}\label{DEF} Let $X$ be a Hilbert space. A self-adjoint operator $\mathcal{H}$ in $X$ is called a \emph{locally finite weight}, if there exists an increasing sequence $(X_n)_{n\in\mathbb{N}}$ of Hilbert subspaces $X_n\subseteq X$ such that\vspace{3pt}

\begin{compactitem}

\item[(i)] $\mathcal{H}_n\colon X_n\rightarrow X_n$, $x\mapsto \mathcal{H}x$ is everywhere defined, bounded and $\mathcal{H}_n\geqslant c_n>0$ holds with suitable constants $c_n$ for every $n\in \mathbb{N}$,\vspace{3pt}

\item[(ii)] $\mathcal{H}|_{\cup_{n\in \mathbb{N}}X_n}$ is essentially selfadjoint in $X$.\vspace{3pt}

\end{compactitem}
\end{dfn}

\smallskip

\begin{rem}\label{DEF-REM} From the assumptions in Definition \ref{DEF} the following facts follow almost immediately. \vspace{3pt}
\begin{compactitem}
\item[(i)] $X_n\subseteq D(\mathcal{H})$ holds for every $n\in\mathbb{N}$ and $X_n$ is invariant under $\mathcal{H}$.\vspace{3pt}
\item[(ii)] $\mathcal{H}_n\colon X_n\rightarrow X_n$ is an isomorphism for every $n\in\mathbb{N}$.\vspace{3pt}
\item[(iii)] $P_n\mathcal{H}\subseteq\mathcal{H}P_n$ holds for every $n\in\mathbb{N}$ where $P_n\in L(X)$ denotes the projection on $X_n$.\vspace{3pt}
\item[(iv)] $\cup_{n\in\mathbb{N}}X_n\subseteq D(\mathcal{H})$ is dense with respect to the graph norm $\|\cdot\|_{D(\mathcal{H})}=(\|\cdot\|_X^2+\|\mathcal{H}\cdot\|_X^2)^{1/2}$.\vspace{3pt}
\item[(v)] $\cup_{n\in\mathbb{N}}X_n\subseteq X$ is dense with respect to $\|\cdot\|_X$.
\end{compactitem}
\end{rem}

\smallskip

\begin{lem}\label{LEM-INJ} Let $X$ be a Hilbert space and $\mathcal{H}$ be a locally finite weight. Then $\mathcal{H}$ and $\mathcal{H}^{1/2}$ are injective.
\end{lem}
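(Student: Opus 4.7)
The plan is to use the intertwining relation $P_n\mathcal{H}\subseteq\mathcal{H}P_n$ from Remark \ref{DEF-REM}(iii) together with the strict positivity of each $\mathcal{H}_n$ to kill any candidate kernel element. Specifically, suppose $x\in D(\mathcal{H})$ satisfies $\mathcal{H}x=0$. Applying $P_n$ and using the intertwining relation yields $0=P_n\mathcal{H}x=\mathcal{H}P_n x=\mathcal{H}_n(P_nx)$, where we used $P_nx\in X_n$ and the fact that $\mathcal{H}$ leaves $X_n$ invariant so that $\mathcal{H}P_nx$ equals $\mathcal{H}_n(P_nx)$. Since $\mathcal{H}_n\geqslant c_n>0$ on $X_n$, the operator $\mathcal{H}_n$ is injective on $X_n$, so $P_nx=0$ for every $n\in\mathbb{N}$.

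Next, I want to conclude $x=0$ by letting $n\to\infty$. For this I invoke the standard fact that if $(X_n)_{n\in\mathbb{N}}$ is an increasing sequence of closed subspaces of a Hilbert space with $\overline{\bigcup_{n\in\mathbb{N}}X_n}=X$, then the orthogonal projections $P_n$ converge strongly to the identity on $X$. The density hypothesis is supplied by Remark \ref{DEF-REM}(v). Hence $x=\lim_{n\to\infty}P_nx=0$, which establishes injectivity of $\mathcal{H}$.

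For the second assertion, I first note that $\mathcal{H}$ is non-negative. Indeed, for every $y\in\bigcup_{n\in\mathbb{N}}X_n$ one has $\langle\mathcal{H}y,y\rangle=\langle\mathcal{H}_ny,y\rangle\geqslant c_n\|y\|^2\geqslant0$ for a suitable $n$, and by the essential self-adjointness in Definition \ref{DEF}(ii) this subspace is a core for $\mathcal{H}$, so the inequality $\langle\mathcal{H}y,y\rangle\geqslant0$ extends to all of $D(\mathcal{H})$ by closedness. Consequently the square root $\mathcal{H}^{1/2}$ is well-defined via the Borel functional calculus as a non-negative self-adjoint operator with $\mathcal{H}=(\mathcal{H}^{1/2})^2$.

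Finally, the kernels of $\mathcal{H}$ and $\mathcal{H}^{1/2}$ coincide: if $\mathcal{H}^{1/2}x=0$, then $x\in D(\mathcal{H}^{1/2})$, $\mathcal{H}^{1/2}x=0\in D(\mathcal{H}^{1/2})$, so $x\in D(\mathcal{H})$ and $\mathcal{H}x=\mathcal{H}^{1/2}(\mathcal{H}^{1/2}x)=0$; conversely, if $\mathcal{H}x=0$, then $\|\mathcal{H}^{1/2}x\|^2=\langle\mathcal{H}^{1/2}x,\mathcal{H}^{1/2}x\rangle=\langle\mathcal{H}x,x\rangle=0$. Since $\mathcal{H}$ was shown to be injective, so is $\mathcal{H}^{1/2}$. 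The main step deserving attention is justifying the strong convergence $P_n\to I$, but this is a well-known consequence of the monotonicity of the $X_n$ and the density in Remark \ref{DEF-REM}(v); no deeper obstacle arises.
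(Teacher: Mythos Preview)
Your proof is correct and follows essentially the same approach as the paper: both use the intertwining relation $P_n\mathcal{H}\subseteq\mathcal{H}P_n$ together with the strict positivity $\mathcal{H}_n\geqslant c_n>0$ to conclude $P_nx=0$ for every $n$, then invoke $P_n\to\id_X$ strongly. The paper writes the key step as $c_n\|P_nx\|^2\leqslant\langle P_nx,\mathcal{H}P_nx\rangle=\langle P_nx,P_n\mathcal{H}x\rangle=0$, which is the same argument in inner-product form, and then simply states that the injectivity of $\mathcal{H}^{1/2}$ follows; your version spells out the $\mathcal{H}^{1/2}$ part more carefully via the kernel identity $\ker(\mathcal{H})=\ker(\mathcal{H}^{1/2})$, which is a welcome clarification but not a different route.
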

\begin{proof}
 Let $x\in \ker(\mathcal{H})$. We need to show that $x=0$. For $n\in\mathbb{N}$ let $P_n \in L(X)$ be the orthogonal projection onto $X_n$. We find for each $n\in\mathbb{N}$ a constant $c_n>0$ such that 
 \[
      c_n \|y\|^2 \leqslant \langle y, \mathcal{H}y\rangle
 \]
holds for all $y\in X_n$. Thus we deduce
 \[
    c_n\| P_n x\|^2 \leqslant \langle P_nx,\mathcal{H} P_n x\rangle = \langle P_nx, P_n\mathcal{H} x\rangle =0
 \]
for every $n\in\mathbb{N}$ by using the invariance of $X_n$ under $\mathcal{H}$. From this we infer $0=P_n x\to x$ as $n\to\infty$, which shows that $\mathcal{H}$ is injective. The injectivity of $\mathcal{H}^{1/2}$ follows.
\end{proof}

\smallskip

Our goal is now to associate with a locally finite weight $\mathcal{H}$ on $X$ again a weighted space $X_{\mathcal{H}}$. Since $\mathcal{H}$ is not defined on the whole space we can do this as in Section \ref{sec:mdo2} only on each of the corresponding subspaces. Indeed, for every $n\in\mathbb{N}$ we get a Hilbert space $(X_n,\langle{}\cdot,\mathcal{H}\cdot\rangle{})$ to which the results of Section \ref{sec:mdo2} apply. We thus define
\begin{equation}\label{DFN-XH}
     X_{\mathcal{H}} \coloneqq \big(\mathop{{\textstyle\bigcup}}_{\scriptstyle n\in \mathbb{N}} X_n, \langle \cdot, \mathcal{H}\cdot\rangle\big)^{\sim}
\end{equation}
\red{where $(\cdot)^{\sim}$ denotes the completion,} and show in Proposition \ref{upshot-lemma} that we can embed $D(\mathcal{H})$ into $X_{\mathcal{H}}$ and that the space $X_{\mathcal{H}}$ is in fact independent of the choice of the sequence $(X_n)_{n\in\mathbb{N}}$ as long as the latter fulfills the conditions in Definition \ref{DEF}.

\red{\begin{rem}Note that one cannot drop the completion in the definition of $X_{\mathcal{H}}$. Indeed, it is easy to see that $X=L^2(0,1)$,  $\mathcal{H}\colon L^2(0,1)\to L^2(0,1),f\mapsto (x\mapsto xf(x))$, with $P_n$ being the operator of multiplication by the characteristic function of $[1/n,1]$, satisfies the above assumptions. The space $\big(\mathop{{\textstyle\bigcup}}_{\scriptstyle n\in \mathbb{N}} X_n, \langle \cdot, \mathcal{H}\cdot\rangle\big)$ is however not complete.\end{rem}}

\smallskip

\begin{prop}\label{upshot-lemma} Let $X$ be a Hilbert space, let $\mathcal{H}$ be a locally finite weight with $(X_n)_{n\in\mathbb{N}}$ being a corresponding sequence of Hilbert subspaces. Let $X_{\mathcal{H}}$ be defined as in \eqref{DFN-XH}.\vspace{3pt}
\begin{compactitem}

\item[(i)] The map $P\colon D(\mathcal{H})\rightarrow X_{\mathcal{H}}$, $x\mapsto (P_nx)_{n\in\mathbb{N}}$ is well-defined and injective.

\vspace{3pt}

\item[(ii)] If $(K_n)_{n\in\mathbb{N}}$ is another sequence of subspaces corresponding to $\mathcal{H}$, $Q_n\in L(X)$ is the orthogonal projection on $K_n$, $K_{\mathcal{H}}$ is defined analogously to \eqref{DFN-XH}, then there is an isometric isomorphism $J\colon K_{\mathcal{H}}\rightarrow X_{\mathcal{H}}$ that makes the diagram
\begin{equation*}
\begin{tikzcd}
K_{\mathcal{H}}\arrow[rightarrow, "J"]{r}&X_{\mathcal{H}}\\
D(\mathcal{H})\arrow[hookrightarrow, "Q"]{u}\arrow[hookrightarrow, "P", swap]{ur} & 
\end{tikzcd}
\end{equation*}
commutative.
\end{compactitem}
\end{prop}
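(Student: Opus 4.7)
The plan is to interpret, in part~(i), the element $P(x)\in X_{\mathcal{H}}$ as the limit of the Cauchy sequence $(P_n x)_{n\in\mathbb{N}}$ in the completion, and, in part~(ii), to construct $J$ by isometric extension from a dense subset. Both parts rely essentially on the commutation relation $\mathcal{H}P_n = P_n\mathcal{H}$ on $D(\mathcal{H})$ established in Remark~\ref{DEF-REM}(iii).

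For part~(i), the central computation I would carry out is, for $x\in D(\mathcal{H})$ and $n\geqslant m$,
\[
\langle P_n x-P_m x,\mathcal{H}(P_n x-P_m x)\rangle_X \;=\; \langle (P_n-P_m)x,(P_n-P_m)\mathcal{H}x\rangle_X,
\]
obtained via the commutation. Cauchy--Schwarz together with the uniform bound $\|(P_n-P_m)x\|\leqslant 2\|x\|$ and the convergence $\|(P_n-P_m)\mathcal{H}x\|\to 0$ (the latter thanks to the density of $\bigcup_n X_n$ in $X$, Remark~\ref{DEF-REM}(v)) shows that $(P_n x)_{n\in\mathbb{N}}$ is Cauchy in the weighted norm and therefore represents a well-defined element of $X_{\mathcal{H}}$. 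For injectivity, a vanishing $P(x)$ means that $\langle P_n x,\mathcal{H}P_n x\rangle_X\to 0$; applying the commutation once more together with $P_n x\to x$ in $X$ yields $\langle x,\mathcal{H}x\rangle_X = 0$, that is, $\|\mathcal{H}^{1/2}x\|_X^2 = 0$, and Lemma~\ref{LEM-INJ} forces $x = 0$.

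For part~(ii), the same manipulation delivers the intrinsic identity $\|P(x)\|_{X_{\mathcal{H}}}^2 = \langle x,\mathcal{H}x\rangle_X = \|Q(x)\|_{K_{\mathcal{H}}}^2$ for every $x\in D(\mathcal{H})$. I would therefore define $J$ on $Q[D(\mathcal{H})]\subseteq K_{\mathcal{H}}$ by $J(Q(x))\coloneqq P(x)$; linearity together with this identity make $J$ a well-defined linear isometry on its source. To extend $J$ to all of $K_{\mathcal{H}}$, I would next observe that $\bigcup_n K_n\subseteq D(\mathcal{H})$ by Remark~\ref{DEF-REM}(i) and that for $y\in K_n$ the sequence $(Q_m y)_{m\in\mathbb{N}}$ is eventually constant equal to $y$; hence $Q[D(\mathcal{H})]\supseteq\bigcup_n K_n$, which is dense in $K_{\mathcal{H}}$. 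A continuous linear extension then produces the desired $J\colon K_{\mathcal{H}}\to X_{\mathcal{H}}$, with commutativity $J\circ Q = P$ on $D(\mathcal{H})$ holding by construction. Surjectivity of $J$ follows from the closedness of its image (isometric image of a complete space) together with the fact that this image contains $\bigcup_n X_n$, which is dense in $X_{\mathcal{H}}$.

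The step that requires the most care is the bookkeeping around identifying elements of $\bigcup_n X_n$ and $\bigcup_n K_n$ with their canonical images in the respective completions, so that relations such as $Q(y) = y$ for $y\in K_n$ can be invoked without ambiguity. Once this is straight, the repeated collapse of weighted norms to the intrinsic quantity $\langle x,\mathcal{H}x\rangle_X$ via the commutation relation delivers both parts almost simultaneously, and only a standard extension-by-continuity argument remains.
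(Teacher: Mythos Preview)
Your proposal is correct and follows the same overall strategy as the paper: both rely on the commutation $P_n\mathcal{H}\subseteq\mathcal{H}P_n$ to convert weighted norms into $X$-inner products, and both build $J$ by isometric extension from a dense subset.

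There is one organizational difference worth noting. You first extract the intrinsic identity $\|P(x)\|_{X_{\mathcal{H}}}^2=\langle x,\mathcal{H}x\rangle_X=\|Q(x)\|_{K_{\mathcal{H}}}^2$ for every $x\in D(\mathcal{H})$, which lets you define $J$ directly on the larger set $Q[D(\mathcal{H})]$ via $J(Q(x))=P(x)$; commutativity of the diagram is then built in, and surjectivity follows because the image already contains $P[D(\mathcal{H})]\supseteq\bigcup_n X_n$. The paper instead defines $J$ only on $\bigcup_n K_n$ and then, both for surjectivity and for extending commutativity from $\bigcup_n K_n$ to all of $D(\mathcal{H})$, invokes the core property (graph-norm density of $\bigcup_\ell K_\ell$ in $D(\mathcal{H})$, Remark~\ref{DEF-REM}(iv)) to approximate a given $x\in X_n$ by elements $y_\ell\in K_\ell$. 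Your route is a mild streamlining that needs only the $X$-density of $\bigcup_n X_n$ (Remark~\ref{DEF-REM}(v)); the paper's route makes the role of Definition~\ref{DEF}(ii) more visible.
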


\begin{proof} (i) Let $x\in D(\mathcal{H})$. Then $P_nx\in X_n\subseteq D(\mathcal{H})$. By Remark \ref{DEF-REM}(i) and since $P_n\rightarrow \id_{X}$ holds \red{in the strong operator topology}, we have $\mathcal{H}P_n x=P_n \mathcal{H} x \to \mathcal{H}x$ and $P_nx\rightarrow x$ for $n\rightarrow\infty$. This means that 
$$
\|P_n x - x\|_{\mathcal{H}}^2=\langle P_n x - x, P_nx-x\rangle_{\mathcal{H}}=\langle P_n x - x, \mathcal{H}P_nx-\mathcal{H}x\rangle \to 0
$$
for $n\rightarrow\infty$ as both arguments of the last scalar product tend to zero in $X$. Consequently, $(P_nx)_{n\in\mathbb{N}}$ is a Cauchy sequence in $(\cup_{n\in \mathbb{N}} X_n, \langle \cdot, \mathcal{H}\cdot\rangle)$. This shows that $P$ is well-defined. For the injectivity, let $Px=0$. Then
$$
0=\|Px\|_{X_{\mathcal{H}}}=\|(P_nx)_{n\in\mathbb{N}}\|_{X_{\mathcal{H}}} = \lim_{n\rightarrow\infty}\|P_nx\|_{\mathcal{H}}=\lim_{n\rightarrow\infty}\|\mathcal{H}^{1/2}P_nx\|_{X}
$$
follows. Therefore $\mathcal{H}^{1/2}P_nx\rightarrow0$ in $X$. Since $P_nx\rightarrow x$ for every $x\in X$ and $\mathcal{H}^{1/2}$ is closed we conclude $x=0$, by Lemma \ref{LEM-INJ}.

\medskip

(ii) We put $K\coloneqq\bigcup_{n\in\mathbb{N}}K_n\subseteq D(\mathcal{H})$. Using Lemma \ref{LEM-INJ} we get immediately, that $(K,\langle\cdot,\mathcal{H}\cdot\rangle)$ is a pre-Hilbert space. Indeed, let $x\in K$ be given with $\langle x,\mathcal{H}x\rangle=0$. Then we have $\langle \mathcal{H}^{1/2}x,\mathcal{H}^{1/2}x\rangle=\langle x,\mathcal{H}x\rangle=0$. Since $\mathcal{H}^{1/2}$ is injective we can conclude $x=0$. The other properties are clear.

\smallskip

We see that $(K,\langle\cdot,\mathcal{H}\cdot\rangle)$ embeds isometrically into $X_{\mathcal{H}}$. More precisely, the map
$$
K\rightarrow X_{\mathcal{H}},\;\;x\mapsto (P_n x)_{n\in\mathbb{N}}
$$
is well-defined by (i) and isometric on a dense subset and hence extends isometrically to $J\colon K_{\mathcal{H}}\rightarrow X_{\mathcal{H}}$. In order to prove that $J$ is an isomorphism, it suffices to show that the image of $J$ is dense in $X_\mathcal{H}$. For this let $x\in \bigcup_{n\in\mathbb{N}} X_n$ be given. That is, $x\in X_n$ holds for some $n\in\mathbb{N}$. Since $\bigcup_{\ell\in\mathbb{N}} K_\ell$ is a core for $\mathcal{H}$, we find a sequence $(y_\ell)_{\ell\in\mathbb{N}}$ with $y_\ell\in K_\ell$ and $y_\ell \to x$ in $D(\mathcal{H})$. This means $y_{\ell}\rightarrow x$ and $\mathcal{H}y_{\ell}\rightarrow\mathcal{H}x$ in $X$ for $\ell\rightarrow\infty$. Since $P_n\in L(X)$ we obtain $P_ny_{\ell}\rightarrow P_nx$ and $P_n\mathcal{H}y_{\ell}\rightarrow P_n\mathcal{H}x$ in $X$ for $\ell\rightarrow\infty$. By Remark \ref{DEF-REM}(iii) we can interchange $P_n$ and $\mathcal{H}$ in the last statement and get $\mathcal{H}P_ny_{\ell}\rightarrow \mathcal{H}P_nx$ in $X$ for $\ell\rightarrow\infty$. Thus, we deduce
\[
     \|P_ny_\ell - x\|_{X_{\mathcal{H}}}^2=\langle P_ny_\ell - x, P_ny_\ell - x\rangle _{X_{\mathcal{H}}}=\langle P_ny_\ell - x, \mathcal{H}P_ny_\ell - \mathcal{H}x\rangle \to 0
\]
for $\ell\to\infty$. Thus, $J$ is an isometric isomorphism.

\smallskip

For the commutativity consider $x\in K=\bigcup_{n\in\mathbb{N}}K_n\subseteq D(\mathcal{H})$. The map $Q$ then sends $x$ to a sequence that is eventually constant $x$. In $K_{\mathcal{H}}$ this sequence is equivalent to the sequence with every entry being $x$. But this shows that $JQx$ and $Px$ coincide. As $K\subseteq D(\mathcal{H})$ is dense, this finishes the proof.
\end{proof}

\smallskip

\begin{rem} In the situation of Proposition \ref{upshot-lemma} we have the following. If $\mathcal{H}|_{K}^K$ is bounded and $\mathcal{H}|_{K}^K\geqslant c$ holds for some $c>0$, then $K\subseteq X_{\mathcal{H}}$ is closed if and only if $K\subseteq X$ is closed. Indeed, $\langle\cdot,\mathcal{H}\cdot\rangle$ is equivalent to $\langle\cdot,\cdot\rangle$ on $K$ due to the additional assumption.
\end{rem}

\smallskip

In the remainder, we consider in addition to $\mathcal{H}$ another operator $H\colon D(H)\subseteq X\rightarrow X$. For the moment we assume that we are given a sequence $(X_n)_{n\in\mathbb{N}}$ of subspaces that are invariant under $H$, i.e., $H(D(H))\cap X_n)\subseteq X_n$ holds for every $n\in\mathbb{N}$. Then we consider $H_n\colon D(H_n)\subseteq X_n\rightarrow X_n$ with $D(H_n)=D(H)\cap X_n$ and $H_nx=Hx$ for every $n\in\mathbb{N}$.

\smallskip

\begin{thm}\label{thm:mada} Let $X$ be a Hilbert space and let $(X_n)_{n\in\mathbb{N}}$ be an increasing family of closed subspaces such that $\bigcup_{n\in\mathbb{N}} X_n \subseteq X$ is dense. Denote by $P_n \in L(X)$ the orthogonal projection onto $X_n$ and assume that $H\colon D(H)\subseteq X\to X$ is densely defined such that $H$ leaves $X_n$ invariant and $P_nH \subseteq HP_n$ holds for every $n\in\mathbb{N}$. Then the following are equivalent.\vspace{3pt}
\begin{compactitem}
 \item[(i)] $\overline{H}$ is maximal dissipative.\vspace{3pt}
 \item[(ii)] $H_n$ is dissipative and $\ran(1-H_n)\subseteq X_n$ is dense for every $n\in\mathbb{N}$.
\end{compactitem}
\end{thm}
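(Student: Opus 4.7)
The plan is to prove the two implications separately, with the direction (ii)$\Rightarrow$(i) carrying the bulk of the technical work.

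For (i)$\Rightarrow$(ii), first observe that $H\subseteq \overline{H}$ is dissipative, and each $H_n$ is a restriction of $H$, hence dissipative. For density of $\ran(1-H_n)$ in $X_n$, given $y\in X_n$, apply Minty's theorem (Theorem \ref{thm:Minty}) to the maximal dissipative $\overline{H}$ to obtain $x\in D(\overline{H})$ with $(1-\overline{H})x=y$. The assumed commutation $P_n H\subseteq HP_n$ passes to the closure as $P_n \overline{H}\subseteq \overline{H}P_n$ by a standard sequential argument using continuity of $P_n$, whence $P_n x\in D(\overline{H})\cap X_n$ with $(1-\overline{H})(P_n x)=P_n y=y$. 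Approximating $P_n x$ by a sequence $x_k\in D(H)$ with $Hx_k\to \overline{H}(P_n x)$ and then projecting, the elements $P_n x_k\in D(H_n)$ satisfy $(1-H_n)(P_n x_k)\to y$, giving density.

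For (ii)$\Rightarrow$(i), by Minty's theorem it suffices to verify that $\overline{H}$ is densely defined, dissipative, and that $1-\overline{H}$ is surjective. Density of $D(H)$ is given. For dissipativity of $H$ (hence of $\overline{H}$), take $x\in D(H)$; the commutation property ensures $P_n x\in D(H_n)$ and $HP_nx=P_nHx\to Hx$, so the inequality $\Re\langle P_nx,H_nP_nx\rangle\leqslant 0$ passes to the limit to yield $\Re\langle x,Hx\rangle\leqslant 0$.

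An intermediate step is to show that each $\overline{H_n}$ is maximal dissipative in $X_n$. Density of $D(H_n)\subseteq X_n$ follows by approximating $x\in X_n$ first by $z_k\in D(H)$ and then by $P_n z_k\in D(H_n)$. Dissipativity of $\overline{H_n}$ yields the estimate $\|(1-\overline{H_n})u\|^2\geqslant \|u\|^2+\|\overline{H_n}u\|^2$; since $\overline{H_n}$ is closed, this forces $\ran(1-\overline{H_n})$ to be closed. Being closed and containing the dense set $\ran(1-H_n)$, it equals $X_n$, and Theorem \ref{thm:Minty} yields maximal dissipativity of $\overline{H_n}$.

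The main obstacle is to patch these local inverses into a global inverse of $1-\overline{H}$. For $y\in X$ and each $n\in\mathbb{N}$, let $x_n\in D(\overline{H_n})$ be the unique element with $(1-\overline{H_n})x_n=P_n y$. A sequential approximation argument (using $P_m H\subseteq H P_m$ on $D(H)\cap X_n$) shows $P_m\overline{H_n}\subseteq \overline{H_n}P_m$ for $m\leqslant n$, so applying $P_m$ to the defining equation gives $(1-\overline{H_m})(P_m x_n)=P_m y=(1-\overline{H_m})x_m$; injectivity of $1-\overline{H_m}$ then forces the consistency $P_m x_n=x_m$ for $m\leqslant n$. Combining the orthogonality identity $\|x_n-x_m\|^2=\|x_n\|^2-\|x_m\|^2$ with the uniform bound $\|x_n\|^2+\|\overline{H_n}x_n\|^2\leqslant \|P_n y\|^2\leqslant \|y\|^2$ (from the dissipativity estimate above) shows that $(\|x_n\|)$ is increasing and bounded, so $(x_n)$ is Cauchy; the analogous identity $P_m \overline{H_n}x_n=\overline{H_m}x_m$ shows $(\overline{H_n}x_n)$ is Cauchy as well. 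Since $H_n\subseteq H$ implies $\overline{H_n}\subseteq \overline{H}$, the sequence $(x_n)$ lies in $D(\overline{H})$ with $\overline{H}x_n=\overline{H_n}x_n$ convergent, and closedness of $\overline{H}$ places the limit $x$ in $D(\overline{H})$ with $(1-\overline{H})x=\lim(x_n-\overline{H_n}x_n)=\lim P_n y=y$. A final appeal to Theorem \ref{thm:Minty} completes the proof.
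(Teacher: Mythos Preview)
Your proof is correct, but the route you take for (ii)$\Rightarrow$(i) differs markedly from the paper's, and the trade-off is worth noting.

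For (i)$\Rightarrow$(ii) the paper is more direct: rather than extending the commutation $P_nH\subseteq HP_n$ to the closure and solving $(1-\overline{H})x=y$ exactly, it simply observes that $\ran(1-H)$ is dense in $X$ (since $\ran(1-\overline{H})=X$), picks $x_k\in D(H)$ with $(1-H)x_k\to y$, and applies $P_n$ to obtain $(1-H_n)(P_nx_k)=P_n(1-H)x_k\to P_ny=y$. This avoids the detour through $\overline{H}$ entirely.

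For (ii)$\Rightarrow$(i) the contrast is sharper. The paper does not first establish that each $\overline{H_n}$ is maximal dissipative, nor does it set up the projective system $x_n=(1-\overline{H_n})^{-1}P_ny$ with the consistency $P_mx_n=x_m$. Instead it picks an \emph{arbitrary} sequence $y_n\in\ran(1-H_n)$ with $y_n\to y$, takes preimages $x_n\in D(H_n)\subseteq D(\overline{H})$, and uses only the dissipativity estimate $\|x_n-x_m\|\leqslant\|(1-\overline{H})(x_n-x_m)\|=\|y_n-y_m\|$ to conclude that $(x_n)$ is Cauchy. Closedness of $\overline{H}$ then yields the limit in $D(\overline{H})$. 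Your approach exploits more structure---the orthogonality $\|x_n-x_m\|^2=\|x_n\|^2-\|x_m\|^2$ coming from $P_mx_n=x_m$, and the separate Cauchy argument for $(\overline{H_n}x_n)$---which is elegant and makes the projective-limit nature of the construction transparent, but it requires the intermediate maximal-dissipativity of $\overline{H_n}$ and the commutation lemma $P_m\overline{H_n}\subseteq\overline{H_n}P_m$. The paper's argument is shorter precisely because it sidesteps all of this: any Cauchy sequence of approximants will do, and the single contractivity bound replaces your two monotone-bounded arguments.
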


\begin{proof} (i)$\,\Rightarrow\,$(ii) Since $\overline{H}$ is an extension of $H_n$, it follows that $H_n$ is dissipative. As $\overline{H}$ is maximal dissipative, Lemma \ref{lem:mdr2} implies that $\ran(1-\overline{H})=X$ holds and it follows that $\ran(1-H)\subseteq X$ is dense. Let $n\in\mathbb{N}$ and $y_n \in X_n$. We find $(x_k)_{k\in\mathbb{N}}$ in $D(H)$ such that 
 \[
    x_k - Hx_k \to y_n
 \]
holds for $k\to\infty$ in $X$. Since $P_n x_k \in D(H_n)$ and $HP_n x_k = P_n H x_k$ for all $k\in \mathbb{N}$, we infer
 \[
   P_n x_k - H_nP_n x_k = P_n x_k - H P_n x_k = P_n x_k - P_n H x_k = P_n \left(x_k - H x_k\right) \to P_n y_n= y_n
 \]
for $k\to\infty$ in $X_n$. Hence, $\ran(1-H_n)\subseteq X_n$ is dense.

\smallskip

(ii)$\,\Rightarrow\,$(i) We first show that $H$ is dissipative. Let $x\in D(H)$ be given. For $n\in \mathbb{N}$ we have $P_nx\in D(H)\cap D(H_n)$ and we compute
 \[
   0\geqslant \Re \langle P_n x,H_nP_n x\rangle = \Re \langle P_n x, P_n Hx\rangle = \Re \langle P_n x, Hx\rangle \to \Re \langle x,Hx\rangle
 \]
for $n\to\infty$, where we used that $P_n \to \id_X$ holds in the strong operator topology as $\bigcup_{n\in\mathbb{N}} X_n$ is dense in $X$. Since $H$ is densely defined, we obtain by Lemma \ref{lem:mdr1} that $H$ is closable. Moreover, we have $\bigcup_{n\in\mathbb{N}}\ran(1-H_n)\subseteq\ran(1-H)$. Here, the left-hand side is dense in $\bigcup_{n\in\mathbb{N}} X_n$ and this set is in turn dense in $X$. Therefore, we deduce that $1-H$ has dense range. As $H$ is closable, so is $1-H$. Thus, we  conclude that $\overline{1-H}=1-\overline{H}$ has closed range and is onto. Hence, for $y\in X$ and $n\in\mathbb{N}$, we find $y_n \in \ran(1-H_n)$ such that $y_n\to y$. Thus, there exists $x_n \in D(H_n)\subseteq D(\overline{H})$ such that $y_n=(1-\overline{H})x_n$ holds for every $n\in\mathbb{N}$. For $n$, $m\in\mathbb{N}$ we compute
$$
\|x_n-x_m\|\leqslant\|(1-\overline{H})(x_n-x_m)\|=\|y_n-y_m\|
$$
since $\overline{H}$ is dissipative. The latter shows that $(x_n)_{n\in\mathbb{N}}$ is a Cauchy sequence and hence convergent to some $x\in X$. As $1-\overline{H}$ is closed it follows $(1-\overline{H})x=y$ which establishes that $1-\overline{H}$ is a bijection. This implies that $\overline{H}$ is maximal dissipative.
\end{proof}

\medskip

Next, we want to study the operator $H\mathcal{H}$ on the space $X_{\mathcal{H}}$. We keep in mind that $\mathcal{H}$ and $H$ live on the space $X$, that $X$ and $X_{\mathcal{H}}$ are a priori not comparable but that the map
$$
P\colon D(\mathcal{H})\rightarrow X_{\mathcal{H}}, \;x\mapsto(P_nx)_{n\in\mathbb{N}}
$$
is well-defined and injective. We define $H\mathcal{H}\colon X_{\mathcal{H}}\supseteq D(H\mathcal{H})\rightarrow X_{\mathcal{H}}$ as follows.
\begin{equation}\label{HH}
\begin{array}{rcl}
D(H\mathcal{H}) &:= &\bigl\{\tilde{x}\in X_{\mathcal{H}} \:;\:\exists\:x\in D(\mathcal{H})\colon Px=\tilde{x},\, \mathcal{H}x \in D(H) \text{ and } (P_nH\mathcal{H}x)_{n\in\mathbb{N}}\in X_{\mathcal{H}}\bigl\}\vspace{5pt}\\
H\mathcal{H}\tilde{x} &:=& (P_nH\mathcal{H}x)_{n\in\mathbb{N}}
\end{array}
\end{equation}
We denote by $X_{n,\mathcal{H}}$ the space $X_n$ considered as a subspace of $X_{\mathcal{H}}$. $\tilde{P}_n\in L(X_{\mathcal{H}})$ denotes the corresponding orthogonal projection. Observe that $P$ induces a bijection $X_n\rightarrow X_{n,\mathcal{H}}$. Indeed, $(P_kx)_{k\in\mathbb{N}}$ is eventually constant and thus Cauchy in $(\bigcup_{n\in\mathbb{N}}X_n,\langle\cdot,\mathcal{H}\cdot\rangle{})$. Observe further that we have
$$
\langle Px,Py\rangle_{X_{n,\mathcal{H}}}=\langle x,\mathcal{H}y\rangle_{X_n}
$$
for all $x$, $y\in X_n$.

\medskip

\begin{prop}\label{lem:mada} Let $X$ be a Hilbert space, let $\mathcal{H}$ be a locally finite weight and let $(X_n)_{n\in\mathbb{N}}$ be a corresponding family of subspaces. Let $H\colon D(H)\subseteq X\to X$ be densely defined and assume that $P_n H \subseteq HP_n$ holds for every $n\in\mathbb{N}$. Assume in addition that every $X_n$ is invariant under $H$.  We consider the operator $H\mathcal{H}\colon X_{\mathcal{H}}\supseteq D(H\mathcal{H})\rightarrow X_{\mathcal{H}}$ as defined in \eqref{HH}. Then the following holds. \vspace{5pt}

\begin{compactitem}

\item[(i)] The operator $H\mathcal{H}$ is densely defined.\vspace{3pt}

\item[(ii)] We have $\tilde{P}_n H\mathcal{H}\subseteq H\mathcal{H}\tilde{P}_n$ for every $n\in\mathbb{N}$.
\end{compactitem}
\end{prop}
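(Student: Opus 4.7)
The plan is to exploit the commutation of $P_n$ with both $\mathcal{H}$ and $H$ together with an explicit description of $\tilde{P}_n$ in terms of $P_n$, which will reduce both claims to bookkeeping with the two relations $P_n \mathcal{H} \subseteq \mathcal{H}P_n$ (Remark \ref{DEF-REM}(iii)) and $P_n H \subseteq H P_n$.

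For (i), I would show that $D(H\mathcal{H}) \cap X_{n,\mathcal{H}}$ is dense in $X_{n,\mathcal{H}}$ for each $n$, which suffices since $\bigcup_{n\in\mathbb{N}} X_{n,\mathcal{H}}$ is dense in $X_{\mathcal{H}}$ by construction. First, $D(H) \cap X_n$ is dense in $X_n$ in the $X$-norm: the hypothesis $P_n H \subseteq H P_n$ forces $P_n[D(H)] \subseteq D(H) \cap X_n$, and the former is dense in $X_n$ since $D(H)$ is dense in $X$ and $P_n$ is continuous and surjective onto $X_n$. Second, since $\mathcal{H}_n \colon X_n \to X_n$ is an isomorphism by Remark \ref{DEF-REM}(ii), the preimage $D_n := \mathcal{H}_n^{-1}[D(H) \cap X_n]$ is dense in $X_n$. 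For $y \in D_n$ one has $y \in X_n \subseteq D(\mathcal{H})$, $\mathcal{H}y \in D(H)$, and $H\mathcal{H}y \in X_n$ by invariance of $X_n$ under $H$, so $(P_k H\mathcal{H}y)_{k\in\mathbb{N}}$ is eventually constant and hence represents an element of $X_{\mathcal{H}}$; thus $Py \in D(H\mathcal{H})$. Boundedness of $\mathcal{H}_n$ means the $X$-norm dominates the $X_{n,\mathcal{H}}$-norm on $X_n$, so density of $P(D_n)$ in $X_{n,\mathcal{H}}$ follows.

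For (ii), the key preliminary step is to establish $\tilde{P}_n P x = P P_n x$ for every $x \in D(\mathcal{H})$. I would first prove the identity
\[
\langle Pu, Pv \rangle_{X_{\mathcal{H}}} = \langle u, \mathcal{H}v\rangle_X \quad \text{for } u \in D(\mathcal{H}),\ v \in \textstyle\bigcup_{k} X_k,
\]
by writing $Pu$ as the limit in $X_{\mathcal{H}}$ of $PP_k u$ and evaluating. Combined with the invariance $\mathcal{H}[X_n] \subseteq X_n$, this yields $\langle Px - PP_n x, Pv\rangle_{X_{\mathcal{H}}} = \langle (I-P_n)x, \mathcal{H}v\rangle_X = 0$ for all $v \in X_n$, giving $\tilde{P}_n Px = P P_n x$. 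The same argument applied to $u \in X_k$ shows $\tilde{P}_n|_{X_k} = P_n|_{X_k}$.

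With this in hand, take $\tilde{x} = Px \in D(H\mathcal{H})$ and set $z := P_n x \in X_n \subseteq D(\mathcal{H})$. The commutation $P_n \mathcal{H} \subseteq \mathcal{H} P_n$ gives $\mathcal{H}z = P_n \mathcal{H}x$, which lies in $D(H)$ by $P_n H \subseteq H P_n$, and then $H\mathcal{H}z = P_n H\mathcal{H}x \in X_n$. Consequently $(P_k H\mathcal{H}z)_k$ is eventually constant, so $Pz = PP_n x \in D(H\mathcal{H})$ with $H\mathcal{H}(PP_n x) = P(P_n H\mathcal{H}x)$. Finally, continuity of $\tilde{P}_n$ applied to $H\mathcal{H}\tilde{x} = (P_k H\mathcal{H}x)_k$, together with $\tilde{P}_n|_{X_k} = P_n|_{X_k}$, produces $\tilde{P}_n H\mathcal{H}\tilde{x} = P(P_n H\mathcal{H}x)$ as well, so both sides agree.

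The main obstacle is the identification $\tilde{P}_n|_{X_k} = P_n|_{X_k}$: although intuitive, it relies essentially on $\mathcal{H}$-invariance of $X_n$ to translate orthogonality in the weighted inner product $\langle \cdot, \mathcal{H}\cdot\rangle$ into orthogonality in $\langle\cdot,\cdot\rangle$. Once this bridge is in place, both assertions follow by routine manipulation of the two commutation relations.
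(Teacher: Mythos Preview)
Your proposal is correct and follows essentially the same approach as the paper: both parts hinge on the density of $D(H)\cap X_n$ in $X_n$ and on the identity $\tilde{P}_n P = PP_n$, proved via $\mathcal{H}$-invariance of $X_n$. Your treatment of (ii) is slightly more streamlined in that you avoid the paper's auxiliary space $Y$ by applying the identity to the approximating terms $P_k H\mathcal{H}x \in X_k \subseteq D(\mathcal{H})$ and invoking continuity of $\tilde{P}_n$, rather than first extending $P$ injectively to $Y$ so as to apply the identity to $H\mathcal{H}x$ itself.
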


\begin{proof} (i) Since $\bigcup_{n\in\mathbb{N}}X_{n,\mathcal{H}}\subseteq X_{\mathcal{H}}$ is dense and $X_{n,\mathcal{H}}$ carries the topology induced by $X_{\mathcal{H}}$ it is enough to show that $D(H\mathcal{H})\cap X_{n,\mathcal{H}}$ is dense in $X_{n,\mathcal{H}}$ for every $n\in\mathbb{N}$. Let $\tilde{x}\in X_{n,\mathcal{H}}$ be such that
\begin{equation}\label{EQ-20}
\forall\:\tilde{y}\in D(H\mathcal{H})\cap X_{n,\mathcal{H}} \colon \langle \tilde{x},\tilde{y}\rangle_{X_{n,\mathcal{H}}} =0
\end{equation}
holds. We need to show $\tilde{x}=0$. We find $x\in D(\mathcal{H})$ with $Px=\tilde{x}$. Since $P$ is linear, it is enough to show $x=0$. Firstly, we claim that
\begin{equation}\label{EQ-22}
\forall\:z\in D(H)\cap X_{n} \colon \langle x,z\rangle=0
\end{equation}
holds. For this let $z\in D(H)\cap X_{n}$ be given. By Remark \ref{DEF-REM}(ii) we can select $y\in X_n\subseteq D(\mathcal{H})$ with $\mathcal{H}y=z\in D(H)$. Since $X_n$ is invariant under $H$ by assumption we get $H\mathcal{H}y=Hz\in X_n$ and in particular, $(P_nH\mathcal{H}y)_{n\in\mathbb{N}}\in X_{\mathcal{H}}$. This means $\tilde{y}:=Py\in D(H\mathcal{H})$ and since $P(X_n)\subseteq X_{n,\mathcal{H}}$, we obtain $\tilde{y}\in D(H\mathcal{H})\cap X_{n,\mathcal{H}}$. Employing \eqref{EQ-20} we get
$$
0=  \langle \tilde{x},\tilde{y}\rangle_{X_{n,\mathcal{H}}} = \langle Px,Py\rangle_{X_{n,\mathcal{H}}}=\langle x,\mathcal{H}y\rangle_{X_n} = \langle x,z\rangle =0
$$
which establishes \eqref{EQ-22}. Thus, we obtain $x=0$ if we show that $D(H)\cap X_n$ is dense in $X_n$. For this let $x_n\in X_n$ be given. We find $(y_k)_{k\in\mathbb{N}}$ in $D(H)$ such that $y_k \to x_n$ for $k\to\infty$. Since $P_n y_k\in D(H)\cap X_n$ and $P_ny_k\to P_n x_n=x_n$ for $k\to\infty$, we obtain that $D(H)\cap X_n\subseteq X_n$ is dense.

\medskip

(ii) In order to show $\tilde{P}_n H\mathcal{H}x=H\mathcal{H}\tilde{P}_nx$ for every $x\in D(H\mathcal{H})$ we need first to understand how $\tilde{P}_n$ acts on $D(\mathcal{H}H)$ and on $\ran(\mathcal{H}H)$. In order to do this, we define the auxiliary space
$$
Y:=\bigl\{x\in X\:;\:(P_nx)_{n\in\mathbb{N}} \text{ Cauchy sequence in }({\textstyle\bigcup_{n\in\mathbb{N}}}X_n,\langle\cdot,\mathcal{H}\cdot\rangle{})\bigr\}
$$
which is by construction a subspace of $X$. We extend the map $P\colon Y\rightarrow X_{\mathcal{H}}$, $Px=(P_nx)_{n\in\mathbb{N}}$ to $Y$ and show that it is also injective with this larger domain. Indeed, if $Px=0$ then
$$
0=\|Px\|_{X_{\mathcal{H}}}=\|(P_nx)_{n\in\mathbb{N}}\|_{X_{\mathcal{H}}} = \lim_{n\rightarrow\infty}\|P_nx\|_{\mathcal{H}}=\lim_{n\rightarrow\infty}\|\mathcal{H}^{1/2}P_nx\|_{X}
$$
follows. Therefore $\mathcal{H}^{1/2}P_nx\rightarrow0$ in $X$. In addition, we know $P_nx\rightarrow x$ for every $x\in X$. Since $\mathcal{H}^{1/2}$ is closed and injective by Lemma \ref{LEM-INJ}, we conclude $x=0$. We can now think of $Y$ to be a replacement for the \textquotedblleft{}intersection $X\cap X_{\mathcal{H}}$\textquotedblright{}. We claim that $PP_n=\tilde{P}_nP$ holds on $Y$ or, in other words, that the upper part of the diagram
\begin{equation}\label{DIAG-1}
\begin{tikzcd}
X_n\arrow[rightarrow, "P"]{rr}[swap]{\stackrel{\phantom{.}}{\sim}}& & X_{n,\mathcal{H}} \\
X\arrow[hookleftarrow, "\id"]{rd}\arrow[rightarrow, "P_n"]{u}& & X_{\mathcal{H}}\arrow[rightarrow, "\tilde{P}_n", swap]{u}\\
& Y \arrow[hookleftarrow, "P^{-1}"]{rd}\arrow[hookrightarrow, "P"]{ru}& \\
D(\mathcal{H})\arrow[hookrightarrow, "\id"]{uu}\arrow[hookrightarrow, "\id"]{ru}\arrow[hookleftarrow, "P^{-1}", swap]{rr} & & D(H\mathcal{H})\arrow[hookrightarrow, "\id", swap]{uu}
\end{tikzcd}
\end{equation}
is commutative. Let $x\in Y$ and $\tilde{x}:=Px \in X_{\mathcal{H}}$. We put $\tilde{y}:=\tilde{P}_n\tilde{x}=\tilde{P}_nPx$. Then $\tilde{y}\in X_{n,\mathcal{H}}$ is characterized by the condition
$$
\forall\;\tilde{z}\in X_{n,\mathcal{H}}\colon \langle \tilde{x}-\tilde{y},\tilde{z}\rangle_{X_{n,\mathcal{H}}} = 0.
$$
Let $\tilde{z}\in X_{n,\mathcal{H}}$. We put $z:=P^{-1}\tilde{z}\in X_n$ and compute
\begin{align*}
\langle \tilde{x}-PP_nx,\tilde{z}\rangle_{X_{n,\mathcal{H}}} &= \langle P(x-P_nx),Pz\rangle_{X_{n,\mathcal{H}}}
     \\  &=\langle x-P_nx,\mathcal{H}z\rangle_{X_n}
     \\ & =       \langle x-P_nx,\mathcal{H}P_nz\rangle
     \\ & =       \langle  x-P_nx,P_n\mathcal{H}z\rangle
     \\ & =       \langle  P_nx-P_n^2x,\mathcal{H}z\rangle
     \\ & =       \langle  P_nx-P_nx,\mathcal{H}z\rangle = 0
  \end{align*}
and conclude $\tilde{P}_nPx=\tilde{P}_n\tilde{x}=\tilde{y}=PP_nx$.

\medskip

Before we can finish the proof, we observe that whenever we have $\tilde{z}\in D(H\mathcal{H})$, i.e., $\tilde{z}=Pz$ with $z\in D(\mathcal{H})$, $\mathcal{H}z\in D(H)$, we can consider $H\mathcal{H}z\in X$ and $H\mathcal{H}\tilde{z}\in X_{\mathcal{H}}$. This implies however that $H\mathcal{H}z$ belongs to $Y$. We thus can consider $PH\mathcal{H}z\in X_{\mathcal{H}}$ and we see immediately that
\begin{equation}\label{EQ-N}
H\mathcal{H}Pz=H\mathcal{H}\tilde{z}=(P_nH\mathcal{H}z)_{n\in\mathbb{N}}=PH\mathcal{H}z
\end{equation}
holds.

\medskip

Now we show $\tilde{P}_n H\mathcal{H}\subseteq H\mathcal{H}\tilde{P}_n$. For this let $\tilde{x}\in D(H\mathcal{H})$. We apply \eqref{EQ-N} to $z=x$ and $z=P_nx$, use the commutativity of \eqref{DIAG-1}, employ the fact that $P_nH\subseteq HP_n$ holds by assumption, and use Remark \ref{DEF-REM}(iv) to obtain
$$
\tilde{P}_nH\mathcal{H}\tilde{x}=\tilde{P}_nH\mathcal{H}Px=\tilde{P}_nPH\mathcal{H}x=PP_nH\mathcal{H}x=PHP_n\mathcal{H}x=PH\mathcal{H}P_nx=H\mathcal{H}PP_nx=H\mathcal{H}\tilde{P}_n\tilde{x}
$$
as desired.
\end{proof}

\medskip

\begin{rem}\label{rmk-49}\begin{compactitem}\item[(i)] Notice that a priori there might exist $x\in D(\mathcal{H})$ with $\mathcal{H}x\in D(H)$ such that $(P_nH\mathcal{H}x)_{n\in\mathbb{N}}$ is---though convergent to $H\mathcal{H}x$ in $X$---not a Cauchy sequence in the space $(\bigcup_{n\in\mathbb{N}}X_n,\langle{}\cdot,\mathcal{H}\cdot\rangle{})$.

\vspace{4pt}

\item[(ii)] The proof of Proposition \ref{lem:mada}  might seem to be a bit tedious since we kept on using the map $P$, relating the elements of $X$ with the elements of $X_{\mathcal{H}}$, until its very end. Indeed, at some point---and definitely now that the result is established---we can identify $x$ and $\tilde{x}=Px$. The diagram \eqref{DIAG-1} then collapses to
\begin{equation*}
\begin{tikzcd}
X_n\ar[equal]{rr}& & X_{n,\mathcal{H}} \\
X\arrow[hookleftarrow]{rd}\arrow[rightarrow, "P_n"]{u}& & X_{\mathcal{H}}\arrow[rightarrow, "\tilde{P}_n", swap]{u}\\
& Y \arrow[hookrightarrow]{ru}& \\
&D(\mathcal{H})\arrow[hookrightarrow]{u} & \\
&D(H\mathcal{H})\arrow[hookrightarrow]{u}&
\end{tikzcd}
\end{equation*}
and the definition of $H\mathcal{H}\colon X_{\mathcal{H}}\supseteq D(H\mathcal{H})\rightarrow X_{\mathcal{H}}$ simplifies to
\[
D(H\mathcal{H})=\bigl\{ x\in D(\mathcal{H})\:;\:\mathcal{H}x \in D(H) \text{ and } H\mathcal{H}x\in X_{\mathcal{H}}\bigl\},\;\;x\mapsto H\mathcal{H}x.
\]
We emphasize however, that for the proof it was essential that the identification of $Y\subseteq X$ with a subspace of $X_{\mathcal{H}}$ is compatible with the way we identified $D(\mathcal{H})$ with a subspace of $X_{\mathcal{H}}$. If necessary, in order to make it easier to keep track if we work in the space $X_n$ or in $X_{n,\mathcal{H}}$ later, we will reintroduce the map $P$.

\end{compactitem}

\end{rem}

\medskip

\begin{lem}\label{lem:ele} Let $X$ be a Hilbert space and let $H\colon D(H)\subseteq X\to X$ be linear and densely defined. Then the following are equivalent.\vspace{3pt}
\begin{compactitem}
\item[(i)] $H$ is dissipative and $\ran(1-H)\subseteq X$ is dense. \vspace{2pt}
\item[(ii)] $\overline{H}$ is maximal dissipative.
\end{compactitem}
\end{lem}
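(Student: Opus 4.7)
The plan is to establish both directions via the key estimate that dissipativity yields $\|(1-H)x\| \geqslant \|x\|$ for $x\in D(H)$, and then to invoke Minty's theorem (Theorem~\ref{thm:Minty}) once the needed surjectivity is in place.

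For the direction (ii)\,$\Rightarrow$\,(i), I would begin by noting that $H\subseteq \overline{H}$ immediately gives dissipativity of $H$. To obtain density of $\ran(1-H)$, I would apply Lemma~\ref{lem:mdr2} to the maximal dissipative operator $\overline{H}$, concluding $\ran(1-\overline{H})=X$. Then, given $y\in X$, I would pick $x\in D(\overline{H})$ with $(1-\overline{H})x = y$, choose a sequence $(x_n)$ in $D(H)$ with $x_n\to x$ and $Hx_n\to \overline{H}x$ in $X$, and observe that $(1-H)x_n \to y$. This shows $\ran(1-H)$ is dense in $X$.

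For the harder direction (i)\,$\Rightarrow$\,(ii), I would first apply Lemma~\ref{lem:mdr1} to conclude that $H$ is closable, so $\overline{H}$ exists, and as noted in the proof of Lemma~\ref{lem:mdr1} the closure of a dissipative operator is again dissipative. The crucial observation is the dissipativity estimate: for $x\in D(H)$ we have $\|(1-H)x\|^2 = \|x\|^2 - 2\Re\langle x,Hx\rangle + \|Hx\|^2 \geqslant \|x\|^2$, and passing to the closure yields $\|(1-\overline{H})x\|\geqslant \|x\|$ for all $x\in D(\overline{H})$. Since $\overline{H}$ is closed, so is $1-\overline{H}$; combined with the above estimate this implies that $\ran(1-\overline{H})$ is closed in $X$. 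Because $\ran(1-H)\subseteq \ran(1-\overline{H})$ is dense in $X$ by assumption, we obtain $\ran(1-\overline{H})=X$. By Theorem~\ref{thm:Minty} applied to $\overline{H}$ (which is densely defined and dissipative) this yields that $\overline{H}$ is maximal dissipative.

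The only place requiring a little care is verifying that $1-\overline{H}$ has closed range from the lower bound $\|(1-\overline{H})x\|\geqslant \|x\|$: if $(1-\overline{H})x_n\to z$, then $(x_n)$ is Cauchy, hence $x_n\to x$ for some $x\in X$, and closedness of $1-\overline{H}$ gives $x\in D(\overline{H})$ with $(1-\overline{H})x=z$. Apart from this routine step the argument is a direct combination of Lemmas~\ref{lem:mdr1} and~\ref{lem:mdr2} and Theorem~\ref{thm:Minty}.
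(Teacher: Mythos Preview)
Your proof is correct and follows essentially the same route as the paper's. The only cosmetic difference is that in (i)\,$\Rightarrow$\,(ii) you invoke Theorem~\ref{thm:Minty} once $1-\overline{H}$ is shown to be onto, whereas the paper argues maximality directly (if $K\supseteq\overline{H}$ is dissipative then $1-K$ is injective and extends the surjective $1-\overline{H}$, forcing $K=\overline{H}$); since this is precisely the (ii)\,$\Rightarrow$\,(i) direction of Theorem~\ref{thm:Minty}, the two arguments are equivalent.
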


\begin{proof} (i)\,$\Rightarrow$\,(ii)\; By Lemma \ref{lem:mdr1} we obtain that ${H}$ is closable. It is thus easy to see that $\overline{H}$ is dissipative. Moreover, $\ran(\overline{1-H})=\ran(1-\overline{H})$ is closed and contains by assumption a dense subset of $X$. It follows that $1-\overline{H}$ is onto. Any dissipative extension $K$ of $\overline{H}$ leads to $1-\overline{H}\subseteq 1-K$. Since $1-K$ is injective, and $1-\overline{H}$ is onto, we obtain $1-\overline{H}=1-K$, which implies (ii).

\medskip
 
(ii)\,$\Rightarrow$\,(i)\; Since $H$ is a restriction of $\overline{H}$ it is clearly dissipative. Furthermore, $\ran(1-H)$ is dense in $\ran(1-\overline{H})$. By Lemma \ref{lem:mdr2} we get that $\ran(1-\overline{H})=X$. Thus, we infer that (i) holds.
\end{proof}

\smallskip

The desired theorem now reads as follows.

\smallskip

\begin{thm}\label{thm:lfw} Let $X$ be a Hilbert space and let $\mathcal{H}$ be a locally finite weight with corresponding subspaces $(X_n)_{n\in\mathbb{N}}$. Let $H\colon D(H)\subseteq X\to X$ be linear, densely defined and such that $H$ leaves each $X_n$ invariant. Assume moreover that $P_nH \subseteq HP_n$ holds for every $n\in\mathbb{N}$, where $P_n\in L(X)$ is the orthogonal projection onto $X_n$. Then the following are conditions equivalent.

\vspace{3pt}

\begin{compactitem}
\item[(i)] $\overline{H}$ is maximal dissipative in $X$.\vspace{3pt}

\item[(ii)] $H_n$ is dissipative in $X_{n}$ and $\ran(1-H_n)\subseteq X_n$ is dense for every $n\in \mathbb{N}$.\vspace{3pt}

\item[(iii)] $\overline{H}_n$ is maximal dissipative in $X_n$ for every $n\in \mathbb{N}$.\vspace{3pt}
 
\item[(iv)] $H_n\mathcal{H}$ is dissipative in $X_{n,\mathcal{H}}$ and $\ran(1-H_n\mathcal{H})\subseteq X_{n,\mathcal{H}}$ is dense  for every $n\in \mathbb{N}$.\vspace{2.5pt}

\item[(v)]  $\overline{H_n\mathcal{H}}$ is maximal dissipative in $X_{n,\mathcal{H}}$ for every $n\in\mathbb{N}$.\vspace{3pt}

\item[(vi)] $\overline{H\mathcal{H}}$ is maximal dissipative in $X_\mathcal{H}$.
\end{compactitem}
\end{thm}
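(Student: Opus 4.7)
The plan is to chain the six conditions into a cycle of equivalences by reducing each step to one of the preparatory results of this section. The scheme is
\[
\textup{(i)} \Leftrightarrow \textup{(ii)} \Leftrightarrow \textup{(iii)} \Leftrightarrow \textup{(v)} \Leftrightarrow \textup{(iv)} \Leftrightarrow \textup{(vi)},
\]
where the first four equivalences operate on the level of the subspaces $X_n$ and $X_{n,\mathcal{H}}$ and the last one lifts the information back to the global weighted space $X_\mathcal{H}$.

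The equivalence (i)$\Leftrightarrow$(ii) is exactly Theorem~\ref{thm:mada} applied to $H$ with the family $(X_n)_{n\in\mathbb{N}}$; its hypotheses are met by assumption together with the density of $\bigcup_n X_n$ in $X$ from Remark~\ref{DEF-REM}(v). For (ii)$\Leftrightarrow$(iii) I would first observe that $H_n$ is densely defined in $X_n$: if $x_n \in X_n$ and $y_k \in D(H)$ tends to $x_n$ in $X$, then $P_n y_k \in X_n \cap D(H) = D(H_n)$ by $P_n H \subseteq H P_n$, and $P_n y_k \to P_n x_n = x_n$ by continuity of $P_n$. Then Lemma~\ref{lem:ele} on $X_n$ gives (ii)$\Leftrightarrow$(iii) directly. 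Next, (iii)$\Leftrightarrow$(v) follows from Theorem~\ref{thm:mdw} applied on $X_n$ to the (closed) operator $\overline{H_n}$ with the uniformly finite weight $\mathcal{H}_n$, after one has verified $\overline{H_n\mathcal{H}} = \overline{H_n}\,\mathcal{H}_n$ as closed operators on $X_{n,\mathcal{H}}$; this is a routine consequence of the fact that $\mathcal{H}_n$ is a bounded isomorphism on $X_n$. Finally, (iv)$\Leftrightarrow$(v) is another direct application of Lemma~\ref{lem:ele}, now on the weighted space $X_{n,\mathcal{H}}$ and for the densely defined operator $H_n\mathcal{H}$, where density follows from density of $D(H_n\mathcal{H})=\mathcal{H}_n^{-1}(D(H_n))$ in $X_n$ and the equivalence of $\|\cdot\|_X$ and $\|\cdot\|_\mathcal{H}$ on $X_n$.

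The main step, and the expected principal obstacle, is (iv)$\Leftrightarrow$(vi). For this I would apply Theorem~\ref{thm:mada} to the operator $H\mathcal{H}$ on $X_\mathcal{H}$ with the family $(X_{n,\mathcal{H}})_{n\in\mathbb{N}}$. The hypotheses to check are: the $X_{n,\mathcal{H}}$ form an increasing family of closed subspaces of $X_\mathcal{H}$ with dense union, where closedness uses that $\|\cdot\|_X$ and $\|\cdot\|_\mathcal{H}$ are equivalent on $X_n$ by the bounds on $\mathcal{H}_n$, and density is built into the construction~\eqref{DFN-XH}; $H\mathcal{H}$ is densely defined in $X_\mathcal{H}$, which is Proposition~\ref{lem:mada}(i); the commutation $\tilde{P}_n H\mathcal{H} \subseteq H\mathcal{H}\tilde{P}_n$ holds, which is Proposition~\ref{lem:mada}(ii); and each $X_{n,\mathcal{H}}$ is invariant under $H\mathcal{H}$, which follows from the invariance of $X_n$ under both $\mathcal{H}$ and $H$. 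The delicate point is to identify the restriction $(H\mathcal{H})_n$ of $H\mathcal{H}$ to $X_{n,\mathcal{H}}$ with the operator $H_n\mathcal{H}$ appearing in condition (iv). This is done by unwinding the definition~\eqref{HH}: an element $\tilde{x} = Px \in D(H\mathcal{H}) \cap X_{n,\mathcal{H}}$ forces $x \in X_n$ with $\mathcal{H}x \in D(H) \cap X_n = D(H_n)$, after which $(H\mathcal{H})_n\tilde{x} = P(H_n\mathcal{H}x)$. The diagram~\eqref{DIAG-1} from Remark~\ref{rmk-49}(ii) encapsulates precisely this bookkeeping. Once this identification is in place, Theorem~\ref{thm:mada} yields (iv)$\Leftrightarrow$(vi) and closes the cycle.
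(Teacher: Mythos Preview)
Your proposal is correct and follows essentially the same scheme as the paper: (i)$\Leftrightarrow$(ii) via Theorem~\ref{thm:mada}, (ii)$\Leftrightarrow$(iii) and (iv)$\Leftrightarrow$(v) via Lemma~\ref{lem:ele}, (iii)$\Leftrightarrow$(v) via Theorem~\ref{thm:mdw}, and (iv)$\Leftrightarrow$(vi) via Theorem~\ref{thm:mada} applied to $H\mathcal{H}$ on $X_\mathcal{H}$ using Proposition~\ref{lem:mada}. Your write-up is in fact more careful than the paper's in spelling out the density of $D(H_n)$ in $X_n$, the identity $\overline{H_n\mathcal{H}}=\overline{H_n}\,\mathcal{H}_n$, and the identification of $(H\mathcal{H})_n$ with $H_n\mathcal{H}$, all of which the paper leaves implicit.
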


\begin{proof} (i)\,$\Leftrightarrow$\,(ii) Theorem \ref{thm:mada}.
\smallskip
\\(iv)\,$\Leftrightarrow$\,(vi) By Proposition \ref{lem:mada} the assumptions of Theorem \ref{thm:mada}, but for $H\mathcal{H}$ instead of $H$ and $\tilde{P}_n\in L(X_{\mathcal{H}})$ projecting on $X_{n,\mathcal{H}}$, are satisfied. Therefore the equivalence follows again from Theorem \ref{thm:mada}.
\smallskip
\\(ii)\,$\Leftrightarrow$\,(iii) Lemma \ref{lem:ele}.
\smallskip
\\(iv)\,$\Leftrightarrow$\,(v) By Proposition \ref{lem:mada}, the operators $H_{n}$ are densely defined in $X_{n,\mathcal{H}}$. Thus, this equivalence also follows from Lemma \ref{lem:ele}.
\smallskip
\\(iii)\,$\Leftrightarrow$\,(v) Theorem \ref{thm:mdw}. 
\end{proof}

\begin{rem} The difference between Theorem \ref{thm:mdw} and Theorem \ref{thm:lfw} is the assumption on $\mathcal{H}$. In Theorem \ref{thm:lfw} we relaxed the condition that $\mathcal{H}\geqslant c>0$ needs to hold. The trade-off is that we have to confine ourselves to operators $H$ that interact in a certain sense well with $\mathcal{H}$. Observe that the moral of both theorems is however the same: When it comes to maximal dissipativity of $\overline{H\mathcal{H}}$, then we can \textquotedblleft{}assume $\mathcal{H}={1}$ without loss of generality\textquotedblright{}.
\end{rem}

\smallskip

As a corollary we get that also for $\overline{H\mathcal{H}}$ being skew-self-adjoint we can assume w.l.o.g.~that $\mathcal{H}=1$ holds.

\smallskip

\begin{cor}\label{cor:essX} Let $X$ be a Hilbert space and $\mathcal{H}$ be a locally finite weight corresponding to $(X_n)_{n\in\mathbb{N}}$. Let $H\colon D(H)\subseteq X\to X$ be linear and densely defined with $P_nH\subseteq HP_n$.
Then the following conditions are equivalent.\vspace{3pt}
\begin{compactitem}
 \item[(i)] $H$ is essentially skew-self-adjoint in $X$.\vspace{3pt}
 \item[(ii)] $H\mathcal{H}$ is essentially skew-self-adjoint in $X_{\mathcal{H}}$.
\end{compactitem}
\end{cor}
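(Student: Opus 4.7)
The plan is to combine Theorem \ref{thm:lfw} with the elementary characterization (already cited in the introduction as Waurick \cite[Proposition 4.5]{Waurick2017}) that an operator $A$ in a Hilbert space is skew-self-adjoint if and only if both $A$ and $-A$ are maximal dissipative.

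First, I would verify that the hypotheses of Theorem \ref{thm:lfw} are satisfied both for $H$ and for $-H$. The assumption $P_n H \subseteq H P_n$ passes to $-H$ without change. It also implies that $H$ (and hence $-H$) leaves each $X_n$ invariant: for $x \in D(H) \cap X_n$ we have $P_n x = x \in D(H)$ and
\[
  H x = H P_n x = P_n H x \in X_n,
\]
so the invariance hypothesis of Theorem \ref{thm:lfw} is redundant here. Moreover, density of $D(H)$ in $X$ yields density of $D(-H) = D(H)$ as well.

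Second, I would unfold the two conditions. In $X$, $H$ is essentially skew-self-adjoint iff $\overline{H}$ is skew-self-adjoint iff both $\overline{H}$ and $-\overline{H} = \overline{-H}$ are maximal dissipative in $X$. Analogously, in $X_{\mathcal{H}}$, $H\mathcal{H}$ is essentially skew-self-adjoint iff $\overline{H\mathcal{H}}$ and $\overline{-H\mathcal{H}} = -\overline{H\mathcal{H}}$ are both maximal dissipative in $X_{\mathcal{H}}$ (using that closure commutes with multiplication by $-1$).

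Third, the equivalence (i)$\Leftrightarrow$(vi) in Theorem \ref{thm:lfw} applied to $H$ shows that $\overline{H}$ is maximal dissipative in $X$ if and only if $\overline{H\mathcal{H}}$ is maximal dissipative in $X_{\mathcal{H}}$; applied to $-H$ (whose hypotheses were just verified), it shows that $\overline{-H}$ is maximal dissipative in $X$ if and only if $\overline{-H\mathcal{H}}$ is maximal dissipative in $X_{\mathcal{H}}$. Chaining the two bi-implications with the characterization of skew-self-adjointness yields the equivalence of (i) and (ii).

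I do not anticipate a genuine obstacle; the corollary really is a direct package of Theorem \ref{thm:lfw} with the skew-self-adjoint/maximal dissipative dichotomy. The only bookkeeping point is to observe that the commutation hypothesis transfers to $-H$ and that the closure operation commutes with the sign change, both of which are immediate.
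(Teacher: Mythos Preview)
Your proposal is correct and follows exactly the paper's own argument: apply Theorem \ref{thm:lfw} to both $H$ and $-H$ and combine with the characterization that an operator is (essentially) skew-self-adjoint iff both it and its negative are maximal dissipative (Waurick \cite[Proposition 4.5]{Waurick2017}). Your additional remark that $P_nH\subseteq HP_n$ already forces the $X_n$-invariance needed in Theorem \ref{thm:lfw} is a useful observation that the paper leaves implicit.
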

\begin{proof} The claim follows from Theorem \ref{thm:lfw} in view of the fact that $H$ is essentially skew-self-adjoint if and only if $\overline{H}$ and $-\overline{H}$ are maximal dissipative, see Waurick \cite[Proposition 4.5]{Waurick2017}.
\end{proof}

Next, we establish a formula that allows to compute the adjoint of $H\mathcal{H}$ in the weighted space $X_{\mathcal{H}}$. Notice that in the proof we will use again the map $P$ from Proposition \ref{upshot-lemma}. The particular point that $\mathcal{H}$ is neither assumed to be bounded nor bounded below is the most important part of the next statement. It can be considered as the key abstract result of this contribution.

\smallskip

\begin{thm}\label{thm:adj} Let $X$ be a Hilbert space, let $\mathcal{H}$ be a locally finite weight corresponding to $(X_n)_{n\in\mathbb{N}}$. Let $H\colon D(H)\subseteq X\to X$ be densely defined and satisfy $P_nH\subseteq HP_n$ for all $n\in \mathbb{N}$. Assume that $H$ leaves every $X_n$ invariant. Then $(H\mathcal{H})^{\star}=\overline{H^{\star}\mathcal{H}}$ where the adjoint on the left is taken with respect to $X_{\mathcal{H}}$ and the adjoint on the right with respect to $X$.
\end{thm}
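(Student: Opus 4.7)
The plan is to establish the two inclusions of the claimed equality separately. As a preliminary step, I would observe that adjoining the hypothesis $P_n H \subseteq HP_n$ in $X$---using that $P_n$ is bounded and self-adjoint---yields $P_n H^\star \subseteq H^\star P_n$, so $H^\star$ shares the commutation and invariance structure assumed on $H$. A short splitting computation, using that $P_n$ maps $D(H)$ into $D(H)$ to write $x = P_nx + (1-P_n)x$, further gives the identification of Hilbert-space adjoints $(H|_{X_n})^\star_{X_n} = H^\star|_{X_n}$. In particular Proposition \ref{lem:mada} applies to $H^\star$, so $H^\star\mathcal{H}$, defined on $X_\mathcal{H}$ in analogy with \eqref{HH}, is densely defined.

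For the easy inclusion $\overline{H^\star\mathcal{H}} \subseteq (H\mathcal{H})^\star$, I would take $y \in D(H^\star\mathcal{H})$ and $x \in D(H\mathcal{H})$ and use the identity $\langle u,v\rangle_{X_\mathcal{H}} = \langle u, \mathcal{H}v\rangle_X$, which holds via the embedding $P$ whenever $\mathcal{H}v$ makes sense, to compute
\[
  \langle H\mathcal{H}x, y\rangle_{X_\mathcal{H}} = \langle H\mathcal{H}x, \mathcal{H}y\rangle_X = \langle \mathcal{H}x, H^\star \mathcal{H}y\rangle_X = \langle x, H^\star \mathcal{H}y\rangle_{X_\mathcal{H}}.
\]
This shows $H^\star\mathcal{H} \subseteq (H\mathcal{H})^\star$; since $(H\mathcal{H})^\star$ is automatically closed, the closure inclusion follows.

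For the reverse inclusion $(H\mathcal{H})^\star \subseteq \overline{H^\star\mathcal{H}}$, let $y \in D((H\mathcal{H})^\star)$ and set $y_n := \tilde{P}_n y$. Since $\tilde{P}_n$ is self-adjoint in $X_\mathcal{H}$, dualising the commutation $\tilde{P}_n H\mathcal{H} \subseteq H\mathcal{H}\tilde{P}_n$ from Proposition \ref{lem:mada}(ii) yields $y_n \in D((H\mathcal{H})^\star) \cap X_{n,\mathcal{H}}$ with $(H\mathcal{H})^\star y_n = \tilde{P}_n (H\mathcal{H})^\star y$. I would then identify $(H\mathcal{H})^\star|_{X_{n,\mathcal{H}}}$ with the adjoint of $H_n \mathcal{H}_n$ taken inside the weighted subspace $X_{n,\mathcal{H}}$. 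Since $\mathcal{H}_n$ is bounded and bounded below on $X_n$, the bounded invertible case (the direct computation underlying Theorem \ref{thm:mdw}) gives that this adjoint equals $(H_n)^\star_{X_n}\mathcal{H}_n = H^\star|_{X_n}\mathcal{H}_n$ by the preliminary step. Hence $y_n \in D(H^\star\mathcal{H})$ and $H^\star\mathcal{H}\,y_n = \tilde{P}_n (H\mathcal{H})^\star y$. Letting $n \to \infty$, $y_n \to y$ and $H^\star\mathcal{H}\,y_n \to (H\mathcal{H})^\star y$ in $X_\mathcal{H}$, which places $y$ in $D(\overline{H^\star\mathcal{H}})$ with image $(H\mathcal{H})^\star y$.

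The main obstacle will be the careful bookkeeping around the embedding $P\colon D(\mathcal{H}) \hookrightarrow X_\mathcal{H}$ from Proposition \ref{upshot-lemma}: each restriction, adjoint and closure must be unambiguously interpreted either in the unweighted space $X$ (resp.\ its subspace $X_n$) or in the weighted space $X_\mathcal{H}$ (resp.\ its subspace $X_{n,\mathcal{H}}$), and the various identifications must be shown compatible under the isometric isomorphism $X_n \cong X_{n,\mathcal{H}}$ induced by $P$. Mirroring the diagrammatic bookkeeping already carried out in Proposition \ref{lem:mada}, the reduction to the bounded invertible case on each $X_n$ and the subsequent projection-limit argument should then be essentially automatic.
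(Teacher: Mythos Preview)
Your plan is correct and follows essentially the same route as the paper: the easy inclusion by direct computation, and the reverse inclusion by dualising $\tilde{P}_n H\mathcal{H}\subseteq H\mathcal{H}\tilde{P}_n$, reducing to the level-$n$ adjoint, and passing to the limit. The only organisational difference is that you first isolate the lemma $(H|_{X_n})^{\star}_{X_n}=H^{\star}|_{X_n}$ and then invoke the bounded-invertible-weight formula $(H_n\mathcal{H}_n)^{\star}_{X_{n,\mathcal{H}}}=(H_n)^{\star}_{X_n}\mathcal{H}_n$ as a black box, whereas the paper carries out both of these facts by hand via the explicit identities \eqref{EQ-P-1}, \eqref{EQ-P}, \eqref{EQ-P-2} with careful bookkeeping through the embedding $P$.
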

\begin{proof} Recall that by \eqref{HH} the operator $H\mathcal{H}$ is given by
$$
\begin{array}{rcl}
D(H\mathcal{H}) &= &\bigl\{\tilde{x}\in X_{\mathcal{H}} \:;\:\exists\:x\in D(\mathcal{H})\colon Px=\tilde{x},\, \mathcal{H}x \in D(H) \text{ and } (P_nH\mathcal{H}x)_{n\in\mathbb{N}}\in X_{\mathcal{H}}\bigl\}\vspace{5pt}\\
H\mathcal{H}\tilde{x} &=& (P_nH\mathcal{H}x)_{n\in\mathbb{N}}
\end{array}
$$
and that $H^{\star}\mathcal{H}$ is defined analogously via
$$
\begin{array}{rcl}
D(H^{\star}\mathcal{H})&=&\bigl\{\tilde{x}\in X_{\mathcal{H}} \:;\:\exists\:x\in D(\mathcal{H})\colon Px=\tilde{x},\, \mathcal{H}x \in D(H^{\star}) \text{ and } (P_nH^{\star}\mathcal{H}x)_{n\in\mathbb{N}}\in X_{\mathcal{H}}\bigl\}\vspace{5pt}\\
H^{\star}\mathcal{H}\tilde{x}&=&(P_nH^{\star}\mathcal{H}x)_{n\in\mathbb{N}}.
\end{array}
$$
Let $\tilde{x}\in D(H^{\star}\mathcal{H})$ and $\tilde{y}\in D(H\mathcal{H})$. We select $x$, $y\in D(\mathcal{H})$ according to the above. Employing the same arguments as in the proof of Proposition \ref{lem:mada} we compute 
\begin{align*}
\langle \tilde{x}, H\mathcal{H} \tilde{y}\rangle_{X_{\mathcal{H}}} & = \langle (P_nx)_{n\in\mathbb{N}}, (P_nH\mathcal{H}y)_{n\in\mathbb{N}}\rangle_{X_{\mathcal{H}}}=\lim_{n\rightarrow\infty}\langle{}P_nx,P_nH\mathcal{H}y\rangle_{\mathcal{H}}\\
& =\lim_{n\rightarrow\infty}\langle{}P_nx,\mathcal{H}P_nH\mathcal{H}y\rangle=\lim_{n\rightarrow\infty}\langle{}\mathcal{H}P_nx,HP_n\mathcal{H}y\rangle\\
& =\lim_{n\rightarrow\infty}\langle{}H^{\star}P_n\mathcal{H}x,\mathcal{H}P_ny\rangle=\lim_{n\rightarrow\infty}\langle{}P_nH^{\star}\mathcal{H}x,P_ny\rangle_{\mathcal{H}}\\
& =\langle{}H^{\star}\mathcal{H}\tilde{x},\tilde{y}\rangle_{X_{\mathcal{H}}}
\end{align*}
where we used in addition that $H^{\star}P_n=(P_nH)^{\star}\supseteq(HP_n)^{\star}=P_nH^{\star}$ holds by our assumptions on $H$. This shows $H^{\star}\mathcal{H}\subseteq (H\mathcal{H})^{\star}$. Since the right-hand side operator is closed, we deduce that $\overline{H^{\star}\mathcal{H}}\subseteq (H\mathcal{H})^{\star}$. Thus, it remains to show
\begin{equation}\label{EQ-P-0}
D((H\mathcal{H})^{\star})\subseteq D(\overline{H^{\star}\mathcal{H}}).
\end{equation}

\smallskip

From Proposition \ref{lem:mada} we know that $\tilde{P}_nH\mathcal{H}\subseteq H\mathcal{H}\tilde{P}_n$ holds for every $n$. This implies $(H\mathcal{H})^{\star}\tilde{P}_n=(\tilde{P}_nH\mathcal{H})^{\star}\supseteq (H\mathcal{H}\tilde{P}_n)^{\star}=\tilde{P}_n(H\mathcal{H})^{\star}$. This implies in particular that $(H\mathcal{H})^{\star}$ leaves $X_{n,\mathcal{H}}$ invariant. Now we consider the operator
$$
H_{n,\mathcal{H}}\colon D(H_{n,\mathcal{H}})\subseteq X_{n,\mathcal{H}}\rightarrow X_{n,\mathcal{H}} \, \text{ with } \, D(H_{n,\mathcal{H}})=D(H\mathcal{H})\cap X_{n,\mathcal{H}}  \, \text{ and } \, H_{n,\mathcal{H}}\tilde{x}=H\mathcal{H}\tilde{x}.
$$
We denote by $H_{n,\mathcal{H}}^{\star}$ the adjoint of $H_{n,\mathcal{H}}$ with respect to the scalar product of $X_{n,\mathcal{H}}$. Still for $\tilde{x}\in D((H\mathcal{H})^{\star})$, $\tilde{y}\in D(H\mathcal{H})$ and $n\in\mathbb{N}$ we compute
$$
\langle \tilde{P}_n \tilde{x}, H_{n,\mathcal{H}} \tilde{P}_n \tilde{y} \rangle_{X_{n,\mathcal{H}}} = \langle \tilde{P}_n \tilde{x}, H\mathcal{H}\tilde{P}_n \tilde{y} \rangle_{\mathcal{H}} = \langle (H\mathcal{H})^{\star}\tilde{P}_n \tilde{x},\tilde{P}_n \tilde{y} \rangle_{\mathcal{H}} = \langle (H\mathcal{H})^{\star}\tilde{P}_n \tilde{x},\tilde{P}_n \tilde{y} \rangle_{X_{n,\mathcal{H}}}
$$
where we used that $X_{n,\mathcal{H}}$ is invariant under $(H\mathcal{H})^{\star}$. From this it follows $\tilde{P}_n \tilde{x} \in D(H_{n,\mathcal{H}}^{\star})$ and  
\begin{equation}\label{EQ-P-1}
H_{n,\mathcal{H}}^{\star}\tilde{P}_n \tilde{x} = (H\mathcal{H})^{\star}\tilde{P}_n \tilde{x}.
\end{equation}
Next we establish the following equality
\begin{equation}\label{EQ-P}
D(H_{n,\mathcal{H}}) = \{ P\mathcal{H}^{-1}P_nz\:;\: z \in D(H) \}.
\end{equation}

\textquotedblleft{}$\supseteq$\textquotedblright{} Let $z\in D(H)$ and consider $\mathcal{H}^{-1}P_nz\in X$. Since $\mathcal{H}$ is an isomorphism from $X_n$ onto itself, it follows that $\mathcal{H}^{-1}P_nz\in X_n$. Thus, $P\mathcal{H}^{-1}P_nz\in X_{n,\mathcal{H}}$. On the other hand $\mathcal{H}^{-1}P_nz\in D(\mathcal{H})$. Now $\mathcal{H}(\mathcal{H}^{-1}P_nz)=P_nz\in D(H)$ holds since $z\in D(H)$ by using $P_nH\subseteq HP_n$. Finally we see that
$$
(P_kH\mathcal{H}(\mathcal{H}^{-1}P_nz))_{k\in\mathbb{N}}=(P_kHP_nz)_{k\in\mathbb{N}}\in X_{\mathcal{H}}
$$
since $P_kHP_nz=HP_nz\in X_n$ for $k\geqslant n$. Therefore it follows that $P\mathcal{H}^{-1}P_nz\in D(H\mathcal{H})$.

\smallskip

\textquotedblleft{}$\subseteq$\textquotedblright{} Let $\tilde{y}\in D(H_{n,\mathcal{H}})=D(H\mathcal{H})\cap X_{n,\mathcal{H}}$. According to the definition of $D(H\mathcal{H})$ we select $y\in D(\mathcal{H})$ such that $Py=\tilde{y}$, $\mathcal{H}y\in D(H)$ and $H\mathcal{H}\tilde{y}=(P_kH\mathcal{H}y)_{k\in\mathbb{N}}\in X_{\mathcal{H}}$. Since $P\colon X_n\rightarrow X_{n,\mathcal{H}}$ is an isomorphism,  we conclude $y\in X_n$. We put $z:=\mathcal{H}y$. Then $z\in D(H)$ and
$$
P\mathcal{H}^{-1}(P_nz)=P\mathcal{H}^{-1}(P_n\mathcal{H}y)=P\mathcal{H}^{-1}(\mathcal{H}P_ny)=Py=\tilde{y}
$$
establishes \eqref{EQ-P}.

\smallskip

Next we claim
\begin{equation}\label{EQ-P-2}
H_{n,\mathcal{H}}^{\star}\tilde{x} = H^{\star}\mathcal{H}\tilde{x} \;\text{ for } \; \tilde{x}\in D(H_{n,\mathcal{H}}^{\star})
\end{equation}

For this, let $z\in D(H)$. Employing \eqref{EQ-P} we get $\tilde{y}:=P\mathcal{H}^{-1}P_nz\in D(H_{n,\mathcal{H}})$. Since $\tilde{y}\in X_{n,\mathcal{H}}$ we get that $y:=P^{-1}\tilde{y}$ belongs to $X_n$ from whence it follows that $\tilde{y}\in D(H\mathcal{H})$ with $Py=\tilde{y}$, $y\in D(\mathcal{H})$, $\mathcal{H}y\in D(H)$ and $(P_kH\mathcal{H}y)_{k\in\mathbb{N}}\in X_{\mathcal{H}}$. Since $\mathcal{H}$ and $H$ leave $X_n$ invariant, we have $H\mathcal{H}\tilde{y}=(P_kH\mathcal{H}y)_{k\in\mathbb{N}}=PH\mathcal{H}y\in X_{n,\mathcal{H}}\subseteq X_{\mathcal{H}}$.

\smallskip

For $\tilde{x}\in D(H_{n,\mathcal{H}}^{\star})$ we get analogously that $x:=P^{-1}\tilde{x}$ belongs to $X_n$. 
%Therefore $\mathcal{H}x\in X_n$ and since we showed already that $P_nH^{\star}\subseteq H^{\star}P_n$ holds, we get $\mathcal{H}x\in D(H^{\star})$ and $H^{\star}\mathcal{H}x\in X_n$. The sequence $(P_kH^{\star}\mathcal{H}x)_{k\in\mathbb{N}}$ is thus eventually constant and defines an element of $X_{\mathcal{H}}$. We thus have $H^{\star}\mathcal{H}\tilde{x}=(P_kH^{\star}\mathcal{H}x)_{k\in\mathbb{N}}=PH^{\star}\mathcal{H}x\in X_{n,\mathcal{H}}\subseteq X_{\mathcal{H}}$.
Using $P_n\mathcal{H}\subseteq\mathcal{H}P_n$ and $P_nH\subseteq HP_n$ we compute
\begin{align*}
\langle \mathcal{H}x, H z \rangle & = \langle x, HP_n z\rangle_{\mathcal{H}} = \langle x, H\mathcal{H}\mathcal{H}^{-1}P_n z\rangle_{\mathcal{H}}=\langle x, H\mathcal{H}P^{-1}P\mathcal{H}^{-1}P_n z\rangle_{\mathcal{H}}= \langle x, H\mathcal{H}P^{-1}\tilde{y}\rangle_{\mathcal{H}}\\
& = \langle x, H\mathcal{H}y\rangle_{\mathcal{H}} = \langle P^{-1}Px, P^{-1}PH\mathcal{H}y\rangle_{\mathcal{H}} = \langle P^{-1}\tilde{x}, P^{-1}H\mathcal{H}\tilde{y}\rangle_{\mathcal{H}}=\langle \tilde{x}, H_{n,\mathcal{H}}\tilde{y}\rangle{}_{X_{n,\mathcal{H}}}\\
& = \langle H_{n,\mathcal{H}}^{\star}\tilde{x},\tilde{y}\rangle{}_{X_{n,\mathcal{H}}} = \langle H_{n,\mathcal{H}}^{\star}\tilde{x},\tilde{y}\rangle{}_{X_{\mathcal{H}}} = \langle H_{n,\mathcal{H}}^{\star}\tilde{x},P\mathcal{H}^{-1}P_nz\rangle{}_{X_{\mathcal{H}}} \\
& = \langle PP^{-1}H_{n,\mathcal{H}}^{\star}\tilde{x},P\mathcal{H}^{-1}P_nz\rangle{}_{X_{\mathcal{H}}}=  \langle P^{-1}H_{n,\mathcal{H}}^{\star}\tilde{x},\mathcal{H}^{-1}P_nz\rangle{}_{\mathcal{H}}\\
&= \langle P^{-1}H_{n,\mathcal{H}}^{\star}\tilde{x},P_nz\rangle{}
\end{align*}

which shows that $\mathcal{H}x\in D(H^{\star})$ holds since $P_n\colon X\rightarrow X_n$ is continuous. We observe that in view of \eqref{EQ-P} we can find $z\in X_n$ with $\tilde{y}=P\mathcal{H}^{-1}P_nz$. With such a $z$ we compute
\begin{align*}
\langle H_{n,\mathcal{H}}^{\star}\tilde{x},\tilde{y}\rangle{}_{X_{n,\mathcal{H}}} & = \langle \mathcal{H}x, H z \rangle = \langle H^{\star}\mathcal{H}x, z \rangle =  \langle H^{\star}\mathcal{H}x, P_nz \rangle = \langle P^{-1}PH^{\star}\mathcal{H}x, P^{-1}P\mathcal{H}^{-1}P_nz \rangle_{\mathcal{H}}\\
& =  \langle P^{-1}H^{\star}\mathcal{H}\tilde{x}, P^{-1}\tilde{y} \rangle_{\mathcal{H}} =  \langle H^{\star}\mathcal{H}\tilde{x}, \tilde{y} \rangle_{X_{n,\mathcal{H}}} 
\end{align*}
which implies that \eqref{EQ-P-2} holds.

\smallskip

Combining \eqref{EQ-P-2} with \eqref{EQ-P-1} shows that $(H\mathcal{H})^{\star}\tilde{P}_n=H^{\star}\mathcal{H}\tilde{P}_n$ holds for every $n\in\mathbb{N}$. Now we prove \eqref{EQ-P-0}. Let $\tilde{x}\in D((H\mathcal{H})^{\star})$. We consider $\tilde{P}_n\tilde{x}\rightarrow \tilde{x}$ for $n\rightarrow\infty$ in $X_{\mathcal{H}}$. On the other hand we compute
$$
H^{\star}\mathcal{H}\tilde{P}_n\tilde{x}=(H\mathcal{H})^{\star}\tilde{P}_n\tilde{x}=\tilde{P}_n(H\mathcal{H})^{\star}\tilde{x}\rightarrow (H\mathcal{H})^{\star}\tilde{x} 
$$
for $n\rightarrow\infty$ in $X_{\mathcal{H}}$ since $\tilde{x}\in D((H\mathcal{H})^{\star})$ holds by assumption. This shows $(\tilde{x},(H\mathcal{H})^{\star}\tilde{x})\in\overline{\gr(H^{\star}\mathcal{H})}$ and thus $\tilde{x}\in D(\overline{H^{\star}\mathcal{H}})$.
\end{proof}

For later use we mention the following very easy case of Theorem \ref{thm:adj}.

\smallskip

\begin{cor}\label{thm:adj-easy-cor} Let $X$ be a Hilbert space, let $\mathcal{H}$ be a uniformly finite weight, see Section \ref{SEC:4:1}. Let $H\colon D(H)\subseteq X\to X$ be densely defined. Then $(H\mathcal{H})^{\star}=H^{\star}\mathcal{H}$ where the adjoint on the left is taken with respect to $X_{\mathcal{H}}$ and the adjoint on the right with respect to $X$.
\end{cor}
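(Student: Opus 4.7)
My plan is to prove both inclusions directly, exploiting that uniform finiteness forces $\mathcal{H}\colon X\to X$ to be a bounded, self-adjoint isomorphism with bounded self-adjoint inverse $\mathcal{H}^{-1}$. In particular the norms on $X$ and $X_\mathcal{H}$ are equivalent, $D(H\mathcal{H})=\mathcal{H}^{-1}D(H)$, and $D(H^\star\mathcal{H})=\mathcal{H}^{-1}D(H^\star)$. The only algebraic input I will need is the identity $\langle u,v\rangle_{X_\mathcal{H}}=\langle u,\mathcal{H}v\rangle_X=\langle\mathcal{H}u,v\rangle_X$. I would deliberately avoid the projection machinery $P_n$ developed for Theorem \ref{thm:adj}; the global invertibility of $\mathcal{H}$ makes it superfluous, and in particular no closure will be necessary.

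For the inclusion $H^\star\mathcal{H}\subseteq(H\mathcal{H})^\star$, I would take $y\in D(H^\star\mathcal{H})$ and $x\in D(H\mathcal{H})$ and compute
\[
\langle y,H\mathcal{H}x\rangle_{X_\mathcal{H}}=\langle\mathcal{H}y,H\mathcal{H}x\rangle_X=\langle H^\star\mathcal{H}y,\mathcal{H}x\rangle_X=\langle H^\star\mathcal{H}y,x\rangle_{X_\mathcal{H}},
\]
where the middle equality is just the definition of the Hilbert space adjoint applied to $\mathcal{H}y\in D(H^\star)$ and $\mathcal{H}x\in D(H)$. This yields immediately $y\in D((H\mathcal{H})^\star)$ with $(H\mathcal{H})^\star y=H^\star\mathcal{H}y$.

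For the reverse inclusion I would take $y\in D((H\mathcal{H})^\star)$ with $(H\mathcal{H})^\star y=z$, so that $\langle\mathcal{H}y,H\mathcal{H}x\rangle_X=\langle\mathcal{H}z,x\rangle_X$ for all $x\in D(H\mathcal{H})$. The key manoeuvre is the substitution $u\coloneqq\mathcal{H}x$: as $x$ ranges over $D(H\mathcal{H})=\mathcal{H}^{-1}D(H)$, $u$ sweeps out the whole of $D(H)$, and by self-adjointness of $\mathcal{H}^{-1}$ the right-hand side becomes $\langle z,\mathcal{H}^{-1}\mathcal{H}\cdot\rangle=\langle z,u\rangle_X$. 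Thus $\langle\mathcal{H}y,Hu\rangle_X=\langle z,u\rangle_X$ for every $u\in D(H)$, which says exactly that $\mathcal{H}y\in D(H^\star)$ with $H^\star\mathcal{H}y=z$; hence $y\in D(H^\star\mathcal{H})$ with the claimed image.

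The main (rather mild) obstacle is just the bookkeeping between the two inner products; there is no analytic difficulty because the substitution step is a genuine bijection onto $D(H)$. This explains why, in contrast to Theorem \ref{thm:adj}, no closure appears on the right-hand side: $H^\star\mathcal{H}$ is automatically closed, being the composition of the closed operator $H^\star$ with the bounded isomorphism $\mathcal{H}$.
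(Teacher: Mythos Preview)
Your proof is correct. Both inclusions are verified cleanly, and the substitution $u=\mathcal{H}x$ is exactly the right move: since $\mathcal{H}$ is a bounded self-adjoint isomorphism, it bijects $D(H\mathcal{H})$ onto $D(H)$, and the weighted and unweighted scalar products translate into one another by a single application of $\mathcal{H}$.

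Your route differs from the paper's. The paper derives the corollary as the trivial specialization of Theorem~\ref{thm:adj}: one takes $X_n=X$ and $P_n=\id_X$ for all $n$, obtains $(H\mathcal{H})^\star=\overline{H^\star\mathcal{H}}$, and then observes that $H^\star\mathcal{H}$ is already closed because it is the composition of a closed operator with a bounded isomorphism. Your argument is instead a direct, self-contained computation that never touches the $(P_n)$ machinery or the closure. What your approach buys is transparency and independence from the heavier Theorem~\ref{thm:adj}; what the paper's approach buys is economy, since after the general theorem is in place the corollary is literally one line. Both make the same final observation about why no closure is needed.
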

\begin{proof} It is enough to apply Theorem \ref{thm:adj} with $X_n=X$ and $P_n=\id_X$ for all $n\in\mathbb{N}$. As $\mathcal{H}\colon X\rightarrow X$ is an isomorphism, $H^{\star}\mathcal{H}$ is then already closed.
\end{proof}

\medskip

%%%%%%%%%%%%%%%%%%%%%%%%%%%%%%%%%%%%%%%%%%%%%%%%%%%
%%%%%%%%%%%%%%%%%%%%%%%%%%%%%%%%%%%%%%%%%%%%%%%%%%%
%%                                               %%
%% 6 The port-hamiltonian operator on intervals  %%
%%                                               %%
%%%%%%%%%%%%%%%%%%%%%%%%%%%%%%%%%%%%%%%%%%%%%%%%%%%
%%%%%%%%%%%%%%%%%%%%%%%%%%%%%%%%%%%%%%%%%%%%%%%%%%%

\section{The port-Hamiltonian operator on intervals}\label{SEC:3}\smallskip

In this section we define the port-Hamiltonian operator on finite intervals $[0,b]$ and on the semi-axis $[0,\infty)$. Then we compute its adjoints as this is necessary in order to apply Theorem \ref{CLASS-THM} later on in Section \ref{SEC:5}.

\medskip

Let $d\geqslant1$ be a fixed integer and let $I\subseteq\mathbb{R}$ be a possibly unbounded interval. Let $\mathcal{H}\colon I\rightarrow\mathbb{C}^{d\times d}$ be measurable such that for almost every $\xi\in I$ the matrix $\mathcal{H}(\xi)$ is Hermitian. Assume that
\begin{equation}\label{STANDARD-ASS}
\begin{array}{c}\vspace{3pt}
\forall\;K\subset{}I\;\text{bounded}\;\exists\:m,\,M>0\;\forall\:\zeta\in\mathbb{C}^d \colon\\m|\zeta|^2\leqslant \zeta^{\star}\mathcal{H}(\cdot)\zeta\leqslant M|\zeta|^2 \text{ holds almost everywhere on } K\cap I
\end{array}
\end{equation}
holds. This implies that the standard assumptions of, e.g., \cite{AJ2014, GZM2005, JMZ2015, JZ} are satisfied on every bounded interval. Next we define weighted and unweighted $\Ls^2$-spaces. Unless  otherwise stated, the functions in these spaces will always be $\mathbb{C}^{d}$-valued. We consider
$$
\Ls^2_{\mathcal{H}}(I) := \bigl\{x \in \Ls_{\textnormal{loc}}^2(I)\:;\:x \text{ measurable and } \|x\|_{\Ls^2_{\mathcal{H}}(I)}^2=\int_{I}x(\xi)^{\star}\mathcal{H}(\xi)x(\xi)\dd\xi<\infty\bigr\}
$$
which is a Hilbert space with respect to the scalar product
$$
\langle{}x,y\rangle{}_{\Ls^2_{\mathcal{H}}(I)}:=\int_{I}x(\xi)^{\star}\mathcal{H}(\xi)y(\xi)\dd\xi,
$$
where $x(\xi)^\star$ denotes the transpose of the complex conjugate vector of $x(\xi)$.
We note that $\mathcal{H}$ is a locally finite weight and that $\Ls^2_{\mathcal{H}}(I) = \Ls^2(I)_{\mathcal{H}}$ holds if we employ our previous notation of weighted Hilbert spaces $X_\mathcal{H}$. We mention the following fact for later use; we will have occasion to look into a more refined variant of the continuity statement in Lemma \ref{lem:SE} below.

\smallskip

\begin{lem}\label{lem:Sob} Let $x\in \Ls_{\textnormal{loc}}^2(I)$ and $x'\in \Ls_{\textnormal{loc}}^2(I)$ in the distributional sense. Then $x\in \operatorname{C}(I)$. Moreover for all $a,b\in I$ with $a<b$ and $x,y, x',y'\in \Ls_{\textnormal{loc}}^2(I)$ we have
\[
   \int_a^b x'y = x(b)y(b)-x(a)y(a)-\int_a^b xy'.
\]
\end{lem}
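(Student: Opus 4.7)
The plan is to split the lemma into two steps: first establish that $x$ admits a continuous representative on $I$ (the one-dimensional Sobolev embedding $H^1_{\textnormal{loc}}(I)\hookrightarrow C(I)$), and then, applying the first step to both $x$ and $y$, deduce the integration by parts identity by a direct bilinear expansion closed by a single application of Fubini.

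For the continuity part, I will fix $t_0\in I$ and introduce $\tilde{x}(t):=\int_{t_0}^{t} x'(s)\,\dd s$, which is locally absolutely continuous by Cauchy--Schwarz on each compact subinterval of $I$ combined with $x'\in\Ls^2_{\textnormal{loc}}(I)$. The next step is to show that $x-\tilde{x}$ has vanishing distributional derivative: for any $\phi\in C_c^\infty(I)$, Fubini yields $\int_I \tilde{x}\phi'=-\int_I x'\phi$, while the definition of $x'$ gives $\int_I x\phi'=-\int_I x'\phi$. Invoking the standard argument that a distribution on an interval with vanishing derivative is constant (testing against $\psi-(\int\psi)\chi$ for a fixed $\chi\in C_c^\infty(I)$ with $\int\chi=1$), I conclude $x=\tilde{x}+c$ almost everywhere, and $\tilde{x}+c$ then serves as the continuous representative of $x$, with which I identify it.

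For the integration by parts formula, the representatives supplied by the first step yield $x(t)=x(a)+\int_a^t x'$ and $y(t)=y(a)+\int_a^t y'$ for $t\in[a,b]$. Substituting these into $\int_a^b x'y+\int_a^b xy'$ and expanding will produce
\begin{align*}
\int_a^b x'y+\int_a^b xy' &= y(a)(x(b)-x(a))+x(a)(y(b)-y(a))\\
&\quad+\int_a^b\! x'(t)\!\int_a^t\! y'(s)\,\dd s\,\dd t+\int_a^b\! y'(t)\!\int_a^t\! x'(s)\,\dd s\,\dd t.
\end{align*}
Since $x',y'\in\Ls^1([a,b])$, Fubini applies to both iterated integrals; swapping the order of integration in the second and combining with the first shows that their sum equals $\bigl(\int_a^b x'\bigr)\bigl(\int_a^b y'\bigr)=(x(b)-x(a))(y(b)-y(a))$. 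Collecting terms then leaves exactly $x(b)y(b)-x(a)y(a)$, as required.

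No genuine obstacle arises; this is a classical fact. The only two routine points requiring care are the identification of the distributional derivative of $\tilde{x}$ with $x'$ via Fubini in the first step, and the symmetric rearrangement of the two iterated integrals over complementary triangles of $[a,b]^2$ in the second. Both are standard.
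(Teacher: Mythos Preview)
Your proof is correct. Both steps are carried out carefully, and the Fubini rearrangement over the two complementary triangles in $[a,b]^2$ indeed collapses the cross terms to $(x(b)-x(a))(y(b)-y(a))$, after which the boundary identity falls out.

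The paper's proof takes a different, more citational route: it simply invokes the continuous Sobolev embedding $\operatorname{H}^1(c,d)\hookrightarrow\operatorname{C}[c,d]$ for the first part, and for the integration by parts formula appeals to the density of $\operatorname{C}^1[c,d]$ in $\operatorname{H}^1(c,d)$ together with continuity of point evaluation on $\operatorname{H}^1$ --- so the identity holds for smooth functions and passes to the limit. Your argument is more elementary and entirely self-contained: you reprove the one-dimensional Sobolev embedding via the absolutely continuous antiderivative and the ``zero distributional derivative implies constant'' lemma, and then obtain integration by parts by a direct bilinear expansion plus Fubini, never needing a density argument. The trade-off is length versus dependence on background: the paper's two-line proof is appropriate for a lemma labelled as well-known, while your approach would be the right choice if one wanted to avoid any black-box Sobolev theory.
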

\begin{proof}
 The first statement follows from the fact that $\operatorname{H}^1(c,d)\subseteq \operatorname{C}[c,d]$ continuously whenever $-\infty<c<d<\infty$. The integration by parts formula follows from the density of $\operatorname{C}^1[c,d]$ in $\operatorname{H}^1(c,d)$ and from the fact that the point evaluation of $\operatorname{H}^1$-functions is continuous.
\end{proof}

Let now $I$ be either $[0,b]$ for $0<b<\infty$ or $[0,\infty)$. Let $P_1$, $P_0\in\mathbb{C}^{d\times d}$ with $P_1^{\star}=P_1$ invertible and $P_0^{\star}=-P_0$. We define the port-Hamilonian operator $A\colon D(A)\subseteq\Ls^2_{\mathcal{H}}(I)\rightarrow\Ls^2_{\mathcal{H}}(I)$ via
$$
Ax=\displaystyle P_1(\mathcal{H}x)'+P_0\mathcal{H}x \;\text{ with }\; D(A)=\bigl\{x\in \Ls^2_{\mathcal{H}}(I)\:;\: \mathcal{H}x,\,(\mathcal{H}x)'\in\Ls^2_{\mathcal{H}}(I)\text{ and } \mathcal{H}x|_{\partial I}=0\bigr\}
$$
where we understand $(\mathcal{H}x)'\in\Ls^2_{\mathcal{H}}(I)$ in the sense of distributions and notice that the evaluation at zero, or, respectively, at zero and $b$, is well-defined in view of Lemma \ref{lem:Sob}. In addition we notice that the condition $\mathcal{H}x\in \Ls^2_{\mathcal{H}}(I)$ in the definition of $D(A)$ can be dropped if $I=[0,b)$ or if $\mathcal{H}$ is bounded; the latter we will assume for a part of our results below.

\smallskip

In view of the boundary condition $\mathcal{H}x|_{\partial I}=0$ the operator above corresponds to the minimal operator $H_0$ in the context of Section \ref{SEC:2}. We will turn back to this notation in  Section \ref{SEC:5} when we actually consider extensions. For this, however, we firstly need to compute the adjoint $A^{\star}$ of the port-Hamiltonian operator.

\smallskip

We start with the finite interval.

\smallskip

\begin{lem}\label{B-LEM} Let $0<b<\infty$. Let $A\colon D(A)\subseteq\Ls_{\mathcal{H}}^2(0,b)\rightarrow\Ls_{\mathcal{H}}^2(0,b)$ be given by
\begin{equation*}
\begin{array}{rcl}
Ax&=&\displaystyle P_1({\mathcal{H}}x)'+P_0{\mathcal{H}}x\\[9pt]
D(A)&=&\bigl\{x\in \Ls_{\mathcal{H}}^2(0,b)\:;\: (\mathcal{H}x)'\in \Ls^2_{\mathcal{H}}(0,b) \text{ and } (\mathcal{H}x)(0)=(\mathcal{H}x)(b)=0\bigr\}.
\end{array}
\end{equation*}
Then $A$ is densely defined, closed and its adjoint $A^{\star}\colon D(A^{\star})\subseteq\Ls_{\mathcal{H}}^2(0,b)\rightarrow\Ls_{\mathcal{H}}^2(0,b)$ is given by
\begin{equation*}
\begin{array}{rcl}
A^{\star}x&=&-P_1(\mathcal{H}x)'-P_0 \mathcal{H}x\\[9pt]
D(A^{\star})&=&\bigl\{x\in \Ls_{\mathcal{H}}^2(0,b)\:;\: (\mathcal{H}x)'\in \Ls_{\mathcal{H}}^2(0,b) \bigr\}.
\end{array}
\end{equation*}
\end{lem}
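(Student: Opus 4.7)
The idea is to exploit that on the bounded interval $[0,b]$ the assumption \eqref{STANDARD-ASS} forces $\mathcal{H}$ to be a uniformly finite weight in the sense of Subsection \ref{SEC:4:1}: there exist constants $0<m\leqslant M<\infty$ such that $m|\zeta|^2\leqslant \zeta^\star\mathcal{H}(\xi)\zeta\leqslant M|\zeta|^2$ for a.e.\ $\xi\in(0,b)$ and all $\zeta\in\mathbb{C}^d$. In particular $\Ls^2(0,b)$ and $\Ls^2_{\mathcal{H}}(0,b)$ agree as sets and have equivalent norms, and $\mathcal{H}$ acts as an isomorphism on $\Ls^2(0,b)$. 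Hence writing $A=\tilde A\mathcal{H}$, where
\[
\tilde A y := P_1 y' + P_0 y, \quad D(\tilde A) = \bigl\{y\in\Ls^2(0,b)\:;\: y'\in\Ls^2(0,b),\, y(0)=y(b)=0\bigr\},
\]
reduces the statement to an unweighted calculation, after which Corollary \ref{thm:adj-easy-cor} gives $A^\star = \tilde A^\star\mathcal{H}$ on $\Ls^2_{\mathcal{H}}(0,b)$.

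First I would check that $\tilde A$ is densely defined and closed in $\Ls^2(0,b)$. Density follows since $\operatorname{C}_c^\infty(0,b)^d\subseteq D(\tilde A)$. Closedness is standard: if $y_n\to y$ and $\tilde A y_n\to g$ in $\Ls^2$, then $y_n'=P_1^{-1}(\tilde A y_n - P_0 y_n)\to P_1^{-1}(g-P_0 y)$ in $\Ls^2$, so $y'\in\Ls^2$ and by the continuity of point evaluation on $\operatorname{H}^1$ (Lemma \ref{lem:Sob}) the boundary conditions $y(0)=y(b)=0$ pass to the limit. Transferring back via the isomorphism $\mathcal{H}\colon\Ls^2(0,b)\to\Ls^2(0,b)$ one gets that $A$ is densely defined and closed in $\Ls^2_{\mathcal{H}}(0,b)$.

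Next I would compute $\tilde A^\star$ in $\Ls^2(0,b)$. For $y\in D(\tilde A)$ and $z\in\Ls^2$ with $z'\in\Ls^2$ the integration-by-parts formula from Lemma \ref{lem:Sob} combined with $P_1^\star=P_1$, $P_0^\star=-P_0$, and $y(0)=y(b)=0$ yields
\[
\langle \tilde A y, z\rangle_{\Ls^2} = \int_0^b (y')^\star P_1 z - y^\star P_0 z\,\dd\xi = \int_0^b y^\star(-P_1 z' - P_0 z)\,\dd\xi,
\]
so $\{z\in\Ls^2(0,b)\,;\, z'\in\Ls^2(0,b)\}\subseteq D(\tilde A^\star)$ with $\tilde A^\star z = -P_1 z'-P_0 z$. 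For the converse inclusion, given $z\in D(\tilde A^\star)$ one tests against $y\in\operatorname{C}_c^\infty(0,b)^d\subseteq D(\tilde A)$; this shows that $P_1 z'$ equals an $\Ls^2$ function in the distributional sense, and since $P_1$ is invertible we obtain $z'\in\Ls^2(0,b)$.

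Finally I would apply Corollary \ref{thm:adj-easy-cor} to $\tilde A$ with the uniformly finite weight $\mathcal{H}$ to conclude $A^\star=(\tilde A\mathcal{H})^\star=\tilde A^\star\mathcal{H}$ in $\Ls^2_{\mathcal{H}}(0,b)$, which gives the stated formula and domain. The only delicate point I anticipate is the careful bookkeeping of distributional derivatives together with the absence of an explicit boundary condition in $D(A^\star)$: the vanishing of $\mathcal{H}x$ at the endpoints in $D(A)$ is exactly what kills the boundary terms in the integration by parts, so it is precisely the reason why $D(A^\star)$ carries no boundary constraint. If one prefers to avoid invoking Corollary \ref{thm:adj-easy-cor}, the same conclusion can be reached by a direct computation in $\Ls^2_{\mathcal{H}}(0,b)$ using $\langle x,\mathcal{H}y\rangle_{\Ls^2}=\langle x,y\rangle_{\Ls^2_{\mathcal{H}}}$, but the use of Corollary \ref{thm:adj-easy-cor} is conceptually cleaner and foreshadows the more involved treatment needed in the semi-axis case.
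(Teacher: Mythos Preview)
Your proposal is correct and follows essentially the same route as the paper: reduce to the unweighted case $\mathcal{H}=1$ on $\Ls^2(0,b)$, compute $\tilde A^\star$ there by integration by parts for one inclusion and testing against $\operatorname{C}_c^\infty(0,b)$ for the reverse, and then transfer back via Corollary~\ref{thm:adj-easy-cor}. The only cosmetic differences are that the paper additionally strips off $P_0$ as a bounded perturbation before computing the adjoint, and cites Proposition~\ref{lem:mada} rather than arguing density and closedness directly via the isomorphism $\mathcal{H}$.
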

\begin{proof} We consider $\mathcal{H}={1}$, i.e., $A\colon D(A)\subseteq\Ls^2(0,b)\rightarrow\Ls^2(0,b)$ with $Ax=\displaystyle P_1x'+P_0x$ for $x\in D(A)=\{x\in \operatorname{H}^1(0,b)\:;\: x(0)=x(b)=0\}$. Then it is well-known that $A$ is densely defined and closed. Due to Proposition \ref{lem:mada} this carries over to the case where $\mathcal{H}\not={1}$. In view of Corollary \ref{thm:adj-easy-cor} it suffices also to consider $\mathcal{H}={1}$ in order to compute the adjoint. Indeed, it is enough to show that $A^{\star}\colon D(A)\subseteq\Ls^2_{}(0,b)\rightarrow\Ls^2_{}(0,b)$
$$
A^{\star}x=-P_1x'-P_0 x \,\text{ for }\, x\in D(A^{\star})=\operatorname{H}^1(0,b)
$$
holds. Since $P_0 \colon\Ls^2_{}(0,b)\rightarrow\Ls^2_{}(0,b)$, the operator of multiplication by the matrix $P_0$, is bounded, it follows that $A^{\star}=(A-P_0+P_0)^{\star}=(A-P_0)^{\star}+(P_0)^{\star}=(A-P_0)^{\star}-P_0$ and in particular $D((A-P_0)^{\star})=D(A^{\star})$ holds. Thus, without loss of generality $P_0=0$. By Lemma \ref{lem:Sob}, it is easy to see that $-P_1\partial|_{\operatorname{H}^1(0,b)}\subseteq A^{\star}$, where $\partial$ denotes the distributional derivative in $\operatorname{L}^1_{\textnormal{loc}}(0,b)$. Thus, it remains to show the other inclusion. For this, we let $y\in D(A^{\star})$. Then for all $x\in \operatorname{C}_c^\infty(0,b)\subseteq D(A)$, we obtain
\[
\langle A^{\star}y,x\rangle = \langle y,Ax\rangle = \langle y,P_1\partial x\rangle = \langle P_1y,\partial x\rangle.
\]
Hence, $P_1y \in \operatorname{H}^1(0,b)$ and $-\partial P_1 y = A^{\star}y$. Since $P_1$ is an invertible matrix, we deduce $\partial P_1 = P_1\partial$ on $\operatorname{H}^1(0,b)$ and the statement is proved.
\end{proof}

\smallskip

Now we treat the case of a semi-axis and start with the following inclusion.

\smallskip

\begin{lem}\label{UB-LEM} Let $A\colon D(A)\subseteq\Ls^2_{\mathcal{H}}(0,\infty)\rightarrow\Ls^2_{\mathcal{H}}(0,\infty)$ be given by
\begin{equation}\label{OP-INTRO-INFINITE}
\begin{array}{rcl}
Ax&=&\displaystyle P_1(\mathcal{H}x)'+P_0\mathcal{H}x\\[9pt]
D(A)&=&\bigl\{x\in \Ls^2_{\mathcal{H}}(0,\infty)\:;\:(\mathcal{H}x)'\in\Ls^2_{\mathcal{H}}(0,\infty) \text{ and }(\mathcal{H}x)(0)=0\bigr\}.
\end{array}
\end{equation}
Then $A$ is densely defined, closed and its adjoint $A^{\star}\colon D(A^{\star})\subseteq\Ls^2_{\mathcal{H}}(0,\infty)\rightarrow\Ls^2_{\mathcal{H}}(0,\infty)$ satisfies
\begin{equation}\label{DUAL-INFINITE}
\begin{array}{rcl}
A^{\star}x&=&-P_1(\mathcal{H}x)'-P_0\mathcal{H}x\\[9pt]
D(A^{\star})&\subseteq&\bigl\{x\in \Ls^2_{\mathcal{H}}(0,\infty)\:;\: (\mathcal{H}x)' \in \Ls_{\textnormal{loc}}^1(0,\infty)\bigr\}.
\end{array}
\end{equation}
\end{lem}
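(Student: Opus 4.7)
The plan mirrors the finite-interval argument of Lemma \ref{B-LEM}, but because $\mathcal{H}$ is only locally bounded and locally bounded below on $(0,\infty)$, the direct reduction to $\mathcal{H}=1$ via Corollary \ref{thm:adj-easy-cor} is no longer available. Instead I would work on $\Ls^2_{\mathcal{H}}(0,\infty)$ directly, exploiting that assumption \eqref{STANDARD-ASS} yields pointwise invertibility of $\mathcal{H}$ on every compact subinterval of $(0,\infty)$.

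For density and closedness I would introduce the auxiliary set
\[
D_0 := \{\mathcal{H}^{-1}\phi \:;\: \phi \in \operatorname{C}_c^\infty((0,\infty),\mathbb{C}^d)\},
\]
with $\mathcal{H}^{-1}\phi$ extended by zero off $\supp\phi$. One verifies that $D_0 \subseteq D(A)$: for $x = \mathcal{H}^{-1}\phi$ the function $\mathcal{H}x = \phi$ is smooth, vanishes at $0$, is compactly supported in $(0,\infty)$, and the local bounds in \eqref{STANDARD-ASS} give $x,(\mathcal{H}x)' \in \Ls^2_{\mathcal{H}}(0,\infty)$. Density of $D_0$ in $\Ls^2_{\mathcal{H}}(0,\infty)$ then follows by computing the orthogonal complement in the weighted scalar product: for $y$ perpendicular to $D_0$,
\[
0 = \int_0^\infty (\mathcal{H}^{-1}\phi)^{\star} \mathcal{H} y \,\dd\xi = \int_0^\infty \phi^{\star} y \,\dd\xi
\]
for every test $\phi$, so $y = 0$ almost everywhere. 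Closedness is then the standard computation: for $(x_n) \subseteq D(A)$ with $x_n \to x$ and $Ax_n \to z$ in $\Ls^2_{\mathcal{H}}(0,\infty)$, the local lower bound on $\mathcal{H}$ gives convergence in $\Ls^2_{\textnormal{loc}}(0,\infty)$, local boundedness of $\mathcal{H}$ gives $\mathcal{H}x_n \to \mathcal{H}x$ in $\Ls^1_{\textnormal{loc}}(0,\infty)$, whence $(\mathcal{H}x_n)' = P_1^{-1}(Ax_n - P_0\mathcal{H}x_n)$ converges in $\Ls^1_{\textnormal{loc}}(0,\infty)$. Closedness of the distributional derivative identifies the limit with $(\mathcal{H}x)'$ and shows $Ax = z$; the boundary condition $(\mathcal{H}x)(0)=0$ is preserved by the continuity of point-evaluation on $\operatorname{H}^1_{\textnormal{loc}}$ recorded in Lemma \ref{lem:Sob}.

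For the adjoint I would test only against elements of $D_0$. Given $y \in D(A^{\star})$ and $\phi \in \operatorname{C}_c^\infty((0,\infty),\mathbb{C}^d)$, set $x = \mathcal{H}^{-1}\phi$. Using Hermiticity of $\mathcal{H}$ together with $P_0^{\star} = -P_0$, the defining identity $\langle Ax,y\rangle_{\Ls^2_{\mathcal{H}}(0,\infty)} = \langle x, A^{\star}y\rangle_{\Ls^2_{\mathcal{H}}(0,\infty)}$ collapses to
\[
\int_0^\infty (\phi')^{\star} P_1 \mathcal{H} y \,\dd\xi - \int_0^\infty \phi^{\star} P_0 \mathcal{H}y \,\dd\xi = \int_0^\infty \phi^{\star} A^{\star}y\,\dd\xi
\]
for every such $\phi$. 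This says that $P_1\mathcal{H}y$ has a distributional derivative equal to $-A^{\star}y - P_0\mathcal{H}y$; invertibility of $P_1$ then yields both the formula $A^{\star}y = -P_1(\mathcal{H}y)' - P_0\mathcal{H}y$ and, because $A^{\star}y$ and $P_0\mathcal{H}y$ lie in $\Ls^1_{\textnormal{loc}}(0,\infty)$ by the local bounds, also $(\mathcal{H}y)' \in \Ls^1_{\textnormal{loc}}(0,\infty)$.

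The main difficulty is not conceptual but notational: one has to juggle the three topologies $\Ls^2_{\mathcal{H}}$, $\Ls^2_{\textnormal{loc}}$, and $\Ls^1_{\textnormal{loc}}$, ensuring every passage is justified by \eqref{STANDARD-ASS}. It is also worth noting that only an inclusion is asserted for $D(A^{\star})$: any stronger regularity of $(\mathcal{H}y)'$ and the absence of a boundary condition at infinity are deliberately deferred, as these will be handled via the boundary system machinery in the subsequent sections.
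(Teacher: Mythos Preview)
Your proposal is correct and follows essentially the same approach as the paper: both arguments test the adjoint identity against elements of the form $\mathcal{H}^{-1}\phi$ with $\phi\in\operatorname{C}_c^\infty(0,\infty)$ to extract the distributional derivative of $P_1\mathcal{H}y$, and the paper likewise dismisses density and closedness as straightforward consequences of the local embeddings you spell out in detail.
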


\begin{proof} It is straightforward to check that $A$ is densely defined and closed; for this we use that $\Ls_{\mathcal{H}}^2(0,\infty)$ embeds continuously into $\Ls_{\textnormal{loc}}^1(0,\infty)$ and that $\mathcal{H}$ continuously maps $\Ls_{\textnormal{loc}}^1(0,\infty)$ into itself. For $\phi\in \operatorname{C}_c^\infty(0,\infty)$ and $x\in D(A^\star)$ we compute
\begin{align*}
  \langle \phi, A^\star x \rangle & =  \langle \mathcal{H}^{-1}\phi, A^\star x \rangle_{\mathcal{H}} 
   \\ & =
   \langle A \mathcal{H}^{-1}\phi, x\rangle_{\mathcal{H}} 
   \\ & =    \langle P_1 (\mathcal{H} \mathcal{H}^{-1}\phi)'+P_0\phi, \mathcal{H}x\rangle 
   \\ & =    \langle P_1 \phi'+P_0\phi,\mathcal{H}x\rangle
   \\ & =    \langle \phi',P_1 \mathcal{H}x\rangle- \langle P_0\phi,\mathcal{H}x\rangle .
\end{align*}
Hence
$$
\langle \phi, A^\star x + P_0 \mathcal{H}x \rangle = \langle \phi',P_1 \mathcal{H}x\rangle
$$
and we see that $P_1\mathcal{H}x$ is weakly differentiable with $-\partial P_1\mathcal{H}x = A^\star x + P_0\mathcal{H} x$. This establishes the formula  $A^\star x =-\partial P_1\mathcal{H}x- P_0\mathcal{H}x$.
\end{proof}

If $\mathcal{H}={1}$, then a boundary condition at infinity comes automatically via the classical Barb{\u a}lat lemma, see, e.g., Farkas, Wegner \cite[Theorem 5]{FW}, that states that $x(\xi)\rightarrow0$ for $\xi\rightarrow\infty$ holds if $x$, $x'\in\Ls^2(0,\infty)$. An adapted version of this result can be seen as follows.

\smallskip

\begin{lem}\label{LEM-0} Let $x\in\Ls^2_{\mathcal{H}}(0,\infty)$ such that $(\mathcal{H}x)'\in\Ls^2_{\mathcal{H}}(0,\infty)$. (a) Then $\mathcal{H}x$ is bounded. (b) If $\mathcal{H}$ is bounded, then $\mathcal{H}x$ vanishes at infinity. 
\end{lem}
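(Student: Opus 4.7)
The plan is to set $y \coloneqq \mathcal{H}x$ and show that $|y(\xi)|^2$ has a finite limit as $\xi \to \infty$; both (a) and (b) will then follow from this single fact. Since $\mathcal{H}$ is locally bounded above and locally uniformly bounded below by \eqref{STANDARD-ASS}, the hypotheses $x \in \Ls^2_\mathcal{H}(0,\infty)$ and $(\mathcal{H}x)' \in \Ls^2_\mathcal{H}(0,\infty)$ imply $y, y' \in \Ls^2_{\textnormal{loc}}(0,\infty)$. Lemma \ref{lem:Sob}, applied componentwise, then gives continuity of $y$ on $[0,\infty)$ together with the integration-by-parts identity
\[
|y(\xi)|^2 - |y(0)|^2 = 2\Re \int_0^\xi y^\star y' = 2\Re \int_0^\xi x^\star \mathcal{H}(\mathcal{H}x)',
\]
where I used $\mathcal{H}^\star = \mathcal{H}$ to rewrite $y^\star = x^\star \mathcal{H}$.

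The decisive step will be a pointwise Cauchy--Schwarz estimate with respect to the weight $\mathcal{H}(\xi)$. Since $\mathcal{H}(\xi)$ is positive definite for almost every $\xi$ by \eqref{STANDARD-ASS}, the sesquilinear form $(u,v) \mapsto u^\star \mathcal{H}(\xi) v$ is an inner product on $\mathbb{C}^d$, whence
\[
|x(\xi)^\star \mathcal{H}(\xi)(\mathcal{H}x)'(\xi)| \leqslant \bigl(x^\star \mathcal{H} x\bigr)^{1/2}\bigl(((\mathcal{H}x)')^\star \mathcal{H}(\mathcal{H}x)'\bigr)^{1/2}
\]
almost everywhere on $(0,\infty)$. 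A subsequent application of Cauchy--Schwarz in $\Ls^2(0,\infty)$ to the two factors on the right gives
\[
\int_0^\infty |x^\star \mathcal{H}(\mathcal{H}x)'| \leqslant \|x\|_{\Ls^2_\mathcal{H}(0,\infty)} \|(\mathcal{H}x)'\|_{\Ls^2_\mathcal{H}(0,\infty)} < \infty,
\]
so that $\int_0^\xi x^\star \mathcal{H}(\mathcal{H}x)'$ converges as $\xi \to \infty$. Hence $|y(\xi)|^2$ tends to a finite limit, and combined with continuity this yields (a).

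For (b), the assumption $\|\mathcal{H}(\xi)\| \leqslant M$ almost everywhere gives pointwise $|y|^2 = x^\star \mathcal{H}^2 x \leqslant M \, x^\star \mathcal{H} x$, so $y \in \Ls^2(0,\infty)$. Together with the existence of $L \coloneqq \lim_{\xi \to \infty} |y(\xi)|^2$ from (a), the limit must vanish: otherwise $|y(\xi)|^2 \geqslant L/2$ for all sufficiently large $\xi$ would contradict square-integrability of $y$ on $(0,\infty)$.

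The main conceptual obstacle is that $y = \mathcal{H}x$ is a priori neither in $\Ls^2_\mathcal{H}(0,\infty)$ nor in $\Ls^2(0,\infty)$ under the hypotheses of (a) alone (as $\mathcal{H}$ may be unbounded), so one cannot directly apply the classical Barb{\u a}lat lemma, nor its weighted analogue from Farkas--Wegner \cite[Theorem 5]{FW}, to $y$ itself. The remedy is precisely to rewrite the pointwise derivative $(|y|^2)' = 2\Re(y^\star y')$ as $2\Re(x^\star \mathcal{H} (\mathcal{H}x)')$ and to pair $x$ against $(\mathcal{H}x)'$ in the weighted inner product in which both factors are controlled by hypothesis.
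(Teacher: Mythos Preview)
Your proof is correct and follows essentially the same route as the paper's: both compute $(|\mathcal{H}x|^2)'=2\Re\bigl(x^{\star}\mathcal{H}(\mathcal{H}x)'\bigr)$, bound its integral by $2\|x\|_{\Ls^2_{\mathcal{H}}}\|(\mathcal{H}x)'\|_{\Ls^2_{\mathcal{H}}}$ via the weighted Cauchy--Schwarz inequality, and then for (b) use boundedness of $\mathcal{H}$ to force $\mathcal{H}x\in\Ls^2(0,\infty)$ so that the existing limit must be zero. The only cosmetic difference is that the paper deduces boundedness in (a) directly from the uniform bound on $\int_0^\xi\bigl|(|\mathcal{H}x|^2)'\bigr|$, whereas you first pass to the limit and then invoke continuity; both are fine.
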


\begin{rem}
Note that the conditions in Lemma \ref{LEM-0} are sharp. Indeed, let $\mathcal{H}(\xi)=1+\xi^2$ and $x(\xi)=1/(1+\xi^2)$. Then $x\in L_\mathcal{H}^2(0,\infty)$ and $(\mathcal{H}x)'=0\in L_\mathcal{H}^2(0,\infty)$. $\mathcal{H}x=1$ is bounded, but does not vanish at $\infty$. 
\end{rem}

\begin{proof}[Proof of Lemma \ref{LEM-0}] (a)~The assumptions imply that $(\mathcal{H}x)'\in \Ls^2_{\textnormal{loc}}[0,\infty)$. Consequently, from
$$
|\mathcal{H}x|^2=\langle{}\mathcal{H}x,\mathcal{H}x\rangle{}_{\mathbb{C}^d}=\sum_{k=1}^n(\mathcal{H}x)_k(\overline{\mathcal{H}x})_k,
$$ we read off $(|\mathcal{H}x|^2)'\in \Ls_{\textnormal{loc}}^1[0,\infty)$. For $\xi,\,\eta\in[0,\infty)$ we thus have
$$
 |(\mathcal{H}x)(\xi)|^2-|(\mathcal{H}x)(\eta)|^2 = \int_{\eta}^{\xi}\frac{\dd}{\dd\zeta}|(\mathcal{H}x)(\zeta)|^2\dd\zeta.
$$
Now we compute
\begin{eqnarray*}
\frac{\dd}{\dd\xi}|(\mathcal{H}x)(\xi)|^2 & = & \langle{}(\mathcal{H}x)'(\xi),(\mathcal{H}x)(\xi)\rangle{}_{\mathbb{C}^d}+\langle{}(\mathcal{H}x)(\xi),(\mathcal{H}x)'(\xi)\rangle{}_{\mathbb{C}^d}\\
& = & 2\Re \langle{}(\mathcal{H}x)'(\xi),(\mathcal{H}x)(\xi)\rangle{}_{\mathbb{C}^d}\\
& = & 2\Re \langle{}(\mathcal{H}^{1/2}(\xi)(\mathcal{H}x)'(\xi),\mathcal{H}^{1/2}(\xi)x(\xi)\rangle{}_{\mathbb{C}^d}.
\end{eqnarray*}
for almost every $\xi\in(0,\infty)$. \red{Now we obtain
$$
\int_{0}^{\xi}\Bigl|\frac{\dd}{\dd\zeta}|(\mathcal{H}x)(\zeta)|^2\Bigr|\dd\zeta \leqslant  2\|(\mathcal{H}x)'\|_{\Ls^2_{\mathcal{H}}(0,\infty)}\|x\|_{\Ls^2_{\mathcal{H}}(0,\infty)}
$$
for $\xi\in[0,\infty)$ by applying the Cauchy--Schwarz inequality.} This shows that $\mathcal{H}x$ is bounded.

\smallskip

(b)~The limit
$$
\lim_{\xi\rightarrow\infty}|(\mathcal{H}x)(\xi)|^2 = |(\mathcal{H}x)(0)|^2+\lim_{\xi\rightarrow\infty}\int_{0}^{\xi}\frac{\dd}{\dd\zeta}|(\mathcal{H}x)(\zeta)|^2\dd\zeta
$$
exists, since $\int_0^{\infty}\bigl|\frac{\dd}{\dd\zeta}|(\mathcal{H}x)(\zeta)|^2\bigr|\dd\zeta$ is finite by the above estimates. In view of
\begin{eqnarray*}
\int_0^{\infty}|(\mathcal{H}x)(\xi)|^2\dd\xi & = & \int_0^{\infty}|\mathcal{H}^{1/2}(\xi)(\mathcal{H}^{1/2}x)(\xi)|^2\dd\xi\\
& \leqslant & \int_0^{\infty}|\mathcal{H}^{1/2}(\xi)|_{L(\mathbb{C}^d,\mathbb{C}^d)}^2|(\mathcal{H}^{1/2}x)(\xi)|^2\dd\xi\\
& \leqslant & \sup_{\xi\in[0,\infty)} |\mathcal{H}(\xi)|_{L(\mathbb{C}^d,\mathbb{C}^d)}\int_0^{\infty}\bigl\langle{}(\mathcal{H}^{1/2}x)(\xi),(\mathcal{H}^{1/2}x)(\xi)\bigr\rangle{}_{\mathbb{C}^d}\dd\xi\\
& = & \sup_{\xi\in[0,\infty)} |\mathcal{H}(\xi)|_{L(\mathbb{C}^d,\mathbb{C}^d)}\int_0^{\infty}\bigl\langle{}x(\xi),(\mathcal{H}x)(\xi)\bigr\rangle{}_{\mathbb{C}^d}\dd\xi\\
& = & \sup_{\xi\in[0,\infty)} |\mathcal{H}(\xi)|_{L(\mathbb{C}^d,\mathbb{C}^d)}\cdot\|x\|_{\Ls^2_{\mathcal{H}}[0,\infty)}
\end{eqnarray*}
we see that $\mathcal{H}x\in \Ls^2(0,\infty)$. Since $\lim_{\xi\rightarrow\infty}|(\mathcal{H}x)(\xi)|$ exists by the above, it needs then to be zero.
\end{proof}

Using the above we can now prove the remaining inclusion and determine the adjoint of the port-Hamiltonian operator on the semi-axis.

\smallskip 

\begin{lem}\label{thm:P00} Let $\mathcal{H}$ be as in \eqref{STANDARD-ASS} and be bounded. Let $A\colon D(A)\subseteq \Ls_\mathcal{H}^2(0,\infty)\to  \Ls_\mathcal{H}^2(0,\infty)$ be given as in \eqref{OP-INTRO-INFINITE}. Then $A^\star$ is given by
\begin{equation}\label{EQ-NEU}
\begin{array}{rcl}
A^{\star}x&=&-P_1(\mathcal{H}x)'-P_0\mathcal{H}x\\[9pt]
D(A^{\star})&=&\bigl\{x\in \Ls^2_{\mathcal{H}}(0,\infty)\:;\: (\mathcal{H}x)' \in \Ls^2_\mathcal{H}(0,\infty)\bigr\}.
\end{array}
\end{equation}
\end{lem}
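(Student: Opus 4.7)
The plan is to leverage Lemma \ref{UB-LEM}, which already supplies the correct formula $A^{\star}x=-P_1(\mathcal{H}x)'-P_0\mathcal{H}x$ together with the one-sided inclusion $D(A^{\star})\subseteq\{x\in\Ls^2_\mathcal{H}(0,\infty):(\mathcal{H}x)'\in\Ls^1_{\textnormal{loc}}(0,\infty)\}$. What remains is to sharpen the regularity of members of $D(A^{\star})$ from $\Ls^1_{\textnormal{loc}}$ to $\Ls^2_\mathcal{H}$, and to prove the converse inclusion. The decisive new input, compared to Lemma \ref{UB-LEM}, is the global boundedness of $\mathcal{H}$, which activates the sharper part of the Barb{\u a}lat-type Lemma \ref{LEM-0}.

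For the inclusion $D(A^{\star})\subseteq\{x:(\mathcal{H}x)'\in\Ls^2_\mathcal{H}\}$, I take $x\in D(A^{\star})$ and rearrange the formula from Lemma \ref{UB-LEM} as $P_1(\mathcal{H}x)'=-A^{\star}x-P_0\mathcal{H}x$. The right-hand side lies in $\Ls^2_\mathcal{H}(0,\infty)$: $A^{\star}x$ belongs there by definition of the adjoint, and boundedness of $\mathcal{H}$ yields $\mathcal{H}x\in\Ls^2(0,\infty)\hookrightarrow\Ls^2_\mathcal{H}(0,\infty)$, so $P_0\mathcal{H}x\in\Ls^2_\mathcal{H}(0,\infty)$ as well. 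Invertibility of $P_1$ then gives $(\mathcal{H}x)'\in\Ls^2_\mathcal{H}(0,\infty)$.

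For the reverse inclusion, fix $x\in\Ls^2_\mathcal{H}(0,\infty)$ with $(\mathcal{H}x)'\in\Ls^2_\mathcal{H}(0,\infty)$ and take any $y\in D(A)$. Using $P_1^{\star}=P_1$ and $P_0^{\star}=-P_0$, an integration by parts on $(0,T)$ justified by Lemma \ref{lem:Sob} gives
\[
\int_0^T (Ay)^{\star}\mathcal{H}x\,\dd\xi=\bigl[(\mathcal{H}y)^{\star}P_1\mathcal{H}x\bigr]_0^T-\int_0^T(\mathcal{H}y)^{\star}\bigl(P_1(\mathcal{H}x)'+P_0\mathcal{H}x\bigr)\dd\xi.
\]
The boundary term at $0$ vanishes because $(\mathcal{H}y)(0)=0$. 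For the boundary term at $T$ I invoke Lemma \ref{LEM-0}: part (b) applied to $y$—here the hypothesis that $\mathcal{H}$ is bounded is essential—gives $(\mathcal{H}y)(T)\to 0$ as $T\to\infty$, while part (a) applied to $x$ shows that $\mathcal{H}x$ is bounded on $[0,\infty)$, so the product vanishes in the limit. Passing $T\to\infty$ identifies the right-hand side with $\langle y,-P_1(\mathcal{H}x)'-P_0\mathcal{H}x\rangle_\mathcal{H}$, and the boundedness of this functional in $y\in D(A)$ established exactly as in the first part forces $x\in D(A^{\star})$ with the expected formula.

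The main obstacle is controlling the boundary contribution at infinity: this is the single new ingredient beyond Lemma \ref{UB-LEM}, and it depends crucially on the global boundedness of $\mathcal{H}$, exploited via Lemma \ref{LEM-0}(b) for $y$ in conjunction with the finer statement Lemma \ref{LEM-0}(a) for $x$. Without that assumption, one cannot force $\mathcal{H}y$ to decay at $\infty$, and the integration by parts would retain an uncontrollable boundary term, destroying the identification of $A^{\star}x$.
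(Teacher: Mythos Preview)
Your proof is correct and follows essentially the same approach as the paper: both arguments use Lemma~\ref{UB-LEM} together with boundedness of $\mathcal{H}$ to upgrade the regularity of $(\mathcal{H}x)'$ for the first inclusion, and then handle the reverse inclusion via integration by parts, killing the boundary term at infinity with Lemma~\ref{LEM-0}. The only cosmetic difference is that the paper applies part~(b) of Lemma~\ref{LEM-0} to the candidate $x$ (so $\mathcal{H}x(R)\to 0$), whereas you apply part~(b) to the test element $y\in D(A)$ and part~(a) to $x$; since $\mathcal{H}$ is bounded, part~(b) applies to both functions, so the two variants are interchangeable.
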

\begin{proof} If $\mathcal{H}$ is bounded and $x\in \Ls^2_\mathcal{H}(0,\infty)$, then $P_0\mathcal{H}x\in \Ls^2_{\mathcal{H}}(0,\infty)$. Thus, by the formula for the adjoint from Lemma \ref{UB-LEM}, we infer that $D(A^\star)\subseteq \bigl\{x\in \Ls^2_{\mathcal{H}}(0,\infty)\:;\: (\mathcal{H}x)' \in \Ls^2_\mathcal{H}(0,\infty)\bigr\}$. For the remaining inclusion we take $x\in \Ls_\mathcal{H}^2(0,\infty)$ with $\partial\mathcal{H}x\in \Ls^2_\mathcal{H}(0,\infty)$. By Lemma \ref{LEM-0} we get that $\mathcal{H}x(R)\to 0$ as $R\to\infty$. By using integration by parts it follows $x\in D(A^{\star})$.
\end{proof}

\smallskip

As a preparation for the Section \ref{SEC:5} we reformulate and summarize the above results as follows. The only remaining cases of $I=(-\infty,0]$ and $I=\mathbb{R}$ can be dealt with analogously.

\smallskip

\begin{prop}\label{SKEW-INT} Let $I\subseteq\mathbb{R}$ be an interval and let $\mathcal{H}$ satisfy \eqref{STANDARD-ASS} and be bounded. The operator $A\colon D(A)\subseteq\Ls^2_{\mathcal{H}}(I)\rightarrow\Ls^2_{\mathcal{H}}(I)$ given by
$$
Ax=\displaystyle P_1(\mathcal{H}x)'+P_0\mathcal{H}x \;\text{ with }\; D(A)=\bigl\{x\in \Ls^2_{\mathcal{H}}(I)\:;\:  (\mathcal{H}x)' \in L_\mathcal{H}^2(I)\text{ and } \mathcal{H}x|_{\partial I}=0\bigr\}
$$
is skew-symmetric and for its skew-adjoint $-A^{\star}\colon D(-A^{\star})\subseteq\Ls^2_{\mathcal{H}}(I)\rightarrow\Ls^2_{\mathcal{H}}(I)$
$$
-A^{\star}x=\displaystyle P_1(\mathcal{H}x)'+P_0\mathcal{H}x \;\text{ with }\; D(-A^{\star})=\bigl\{x\in \Ls^2_{\mathcal{H}}(I)\:;\: (\mathcal{H}x)'\in\Ls^2_{\mathcal{H}}(I)\bigr\}.
$$
holds.\hfill\qed
\end{prop}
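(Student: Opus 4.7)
The proposition is essentially a clean reformulation and packaging of the lemmas already established for the finite interval $I=[0,b]$ (Lemma \ref{B-LEM}), for the semi-axis $I=[0,\infty)$ with bounded $\mathcal{H}$ (Lemma \ref{thm:P00}), together with the two reflected cases $I=(-\infty,0]$ and $I=\mathbb{R}$. My plan is therefore to reduce each of the four cases to the results already proved, and then read off skew-symmetry from the explicit formula for $A^\star$.

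First, I would handle the two cases already done. For $I=[0,b]$ Lemma \ref{B-LEM} directly gives
\[
A^\star x = -P_1(\mathcal{H}x)' - P_0 \mathcal{H}x,\qquad D(A^\star)=\{x\in L^2_\mathcal{H}(0,b)\,:\,(\mathcal{H}x)'\in L^2_\mathcal{H}(0,b)\},
\]
so $-A^\star$ is precisely the operator whose domain is $D(A)$ augmented by removing the boundary trace condition. For $I=[0,\infty)$ the same conclusion is given by Lemma \ref{thm:P00}, where boundedness of $\mathcal{H}$ is used to ensure $P_0\mathcal{H}x \in L^2_\mathcal{H}$ and that the Barbălat-type Lemma \ref{LEM-0}(b) forces the vanishing of $(\mathcal{H}x)$ at infinity so that integration by parts is legal without an extra boundary term.

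Next, for $I=(-\infty,0]$ I would apply the reflection $\xi\mapsto -\xi$, which sends $L^2_{\mathcal{H}(\cdot)}(-\infty,0]$ isometrically to $L^2_{\mathcal{H}(-\cdot)}[0,\infty)$ and transforms the formal operator $P_1\partial +P_0\mathcal{H}$ into $-P_1\partial +P_0\tilde{\mathcal{H}}$; since the claim requires no sign condition on $P_1$, Lemma \ref{thm:P00} applies to the transformed operator (with $P_1$ replaced by $-P_1$, still Hermitian and invertible) and the statement pulls back verbatim. For $I=\mathbb{R}$ one can either argue directly (the $L^2$-boundedness given by Lemma \ref{LEM-0}, applied on both half-lines, yields vanishing at $\pm\infty$ so that the integration-by-parts argument from Lemma \ref{UB-LEM} produces no boundary terms and $\partial I=\emptyset$ makes the boundary clause in $D(A)$ vacuous), or one can decompose $\mathbb{R}$ into the two half-lines and transplant the calculation.

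Finally, skew-symmetry follows from the formula for $A^\star$: for $x\in D(A)\subseteq D(A^\star)$ we have
\[
-A^\star x = P_1(\mathcal{H}x)'+P_0\mathcal{H}x = Ax,
\]
hence $A\subseteq -A^\star$. The only step that is not entirely cosmetic is ensuring, in the unbounded cases, that the containment $D(A)\subseteq D(A^\star)$ really uses the bound on $\mathcal{H}$ and the vanishing-at-infinity result from Lemma \ref{LEM-0}(b); this is the point at which the hypothesis \textquotedblleft$\mathcal{H}$ bounded\textquotedblright{} enters, and it is the main (but already settled) technical obstacle. The remaining bookkeeping is purely a matter of transcribing Lemmas \ref{B-LEM} and \ref{thm:P00} into the unified statement.
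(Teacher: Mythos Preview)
Your proposal is correct and matches the paper's own treatment: the proposition is stated as a summary of Lemma \ref{B-LEM} and Lemma \ref{thm:P00}, with the paper explicitly noting that the cases $I=(-\infty,0]$ and $I=\mathbb{R}$ ``can be dealt with analogously.'' Your reflection argument for the left half-line and the two-sided Barb{\u a}lat argument for $\mathbb{R}$ are precisely what ``analogously'' means here, and your reading of skew-symmetry from $A\subseteq -A^\star$ via the explicit description of $D(A^\star)$ is the intended one.
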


\smallskip

Finally we want to treat the case of unbounded $\mathcal{H}$. This is possible if we assume that $P_0=0$. 

\smallskip

\begin{lem}\label{NUN-DOCH-UB-LEM} Let $\mathcal{H}$ be as in \eqref{STANDARD-ASS}. Let $A\colon D(A)\subseteq \Ls_\mathcal{H}^2(0,\infty)\to  \Ls_\mathcal{H}^2(0,\infty)$ be given as in \eqref{OP-INTRO-INFINITE} with $P_0=0$. Then $A^\star$ is given by
\begin{equation}\label{EQ-NEU2}
\begin{array}{rcl}
A^{\star}x&=&-P_1(\mathcal{H}x)'\\[9pt]
D(A^{\star})&\subseteq&\bigl\{x\in \Ls^2_{\mathcal{H}}(0,\infty)\:;\: (\mathcal{H}x)' \in \Ls^2_\mathcal{H}(0,\infty)\bigr\}.
\end{array}
\end{equation}
\end{lem}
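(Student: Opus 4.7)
My plan is to obtain the result as a direct specialization of Lemma~\ref{UB-LEM} to the case $P_0=0$, bypassing the need for the Barb\u{a}lat-type decay used in Lemma~\ref{thm:P00}. First I would invoke Lemma~\ref{UB-LEM}---which requires no boundedness of $\mathcal{H}$---to conclude that for every $x\in D(A^\star)$ the identity $A^\star x=-P_1(\mathcal{H}x)'-P_0\mathcal{H}x$ holds in the distributional sense with $(\mathcal{H}x)'\in \Ls_{\textnormal{loc}}^1(0,\infty)$. Setting $P_0=0$ immediately gives the formula $A^\star x=-P_1(\mathcal{H}x)'$ asserted in \eqref{EQ-NEU2}.

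For the domain inclusion I would argue as follows. By the defining property of the adjoint, $A^\star x\in \Ls^2_\mathcal{H}(0,\infty)$ for every $x\in D(A^\star)$. The identity $P_1(\mathcal{H}x)'=-A^\star x$ therefore places $P_1(\mathcal{H}x)'$ in $\Ls^2_\mathcal{H}(0,\infty)$, and since $P_1$ is a constant invertible Hermitian matrix we recover $(\mathcal{H}x)'=-P_1^{-1}A^\star x\in \Ls^2_\mathcal{H}(0,\infty)$, which is precisely the required inclusion.

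The conceptual point---and the reason this lemma insists on $P_0=0$ once $\mathcal{H}$ is allowed to be unbounded---is that separating the zeroth-order term $P_0\mathcal{H}x$ from the identity for $A^\star x$ requires $P_0\mathcal{H}x$ itself to lie in $\Ls^2_\mathcal{H}(0,\infty)$, which is exactly where the boundedness of $\mathcal{H}$ was used in Lemma~\ref{thm:P00}; with $P_0=0$ this obstacle is absent. The main obstacle one might expect---promoting the inclusion to an equality as in Lemma~\ref{thm:P00}---is genuinely not available here, because the reverse inclusion would need $\mathcal{H}x$ to vanish at infinity via Lemma~\ref{LEM-0}(b), and that conclusion is again contingent on bounded $\mathcal{H}$. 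This is why only ``$\subseteq$'' is stated in \eqref{EQ-NEU2}.
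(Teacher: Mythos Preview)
Your proof is correct and follows exactly the approach of the paper: invoke Lemma~\ref{UB-LEM} (which needs no boundedness of $\mathcal{H}$), set $P_0=0$, and use invertibility of $P_1$ to pass from $A^\star x=-P_1(\mathcal{H}x)'\in \Ls^2_\mathcal{H}(0,\infty)$ to $(\mathcal{H}x)'\in \Ls^2_\mathcal{H}(0,\infty)$. Your additional remarks on why only ``$\subseteq$'' holds and why $P_0=0$ is essential here are accurate and match the paper's reasoning.
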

\begin{proof} That $\mathcal{H}$ is bounded is only needed to conclude from $A^{\star}x=-\partial P_1\mathcal{H}x-P_0\mathcal{H}x\in \Ls^2_{\mathcal{H}}(0,\infty)$ that $(\mathcal{H}x)'\in \Ls^2_{\mathcal{H}}(0,\infty)$. But this does not require even a proof if $P_0=0$.
\end{proof}

\smallskip

Using the above we can establish the adjoint for the case that $\mathcal{H}$ is arbitrary and $P_0=0$. Recall that a function $x\colon I\rightarrow\mathbb{C}^d$ \textit{vanishes at infinity} of $I$, if for each $\epsilon>0$ there exists $K\subseteq I$ compact such that $|x(\xi)|<\epsilon$ holds for all $\xi\in I\backslash K$.

\smallskip

\begin{prop}\label{thm:P00-1} Let $I\in\{[a,b], [a,\infty),\,(-\infty,b], \mathbb{R}\}$ for $a$, $b\in\mathbb{R}$ and let $\mathcal{H}$ satisfy \eqref{STANDARD-ASS}. The operator $A\colon D(A)\subseteq\Ls^2_{\mathcal{H}}(I)\rightarrow\Ls^2_{\mathcal{H}}(I)$ given by
$$
Ax=\displaystyle P_1(\mathcal{H}x)' \;\text{ with }\; D(A)=\bigl\{x\in \Ls^2_{\mathcal{H}}(I)\:;\:  (\mathcal{H}x)'\in L_\mathcal{H}^2(I)\text{ and } \mathcal{H}x|_{\partial I}=0\bigr\}
$$
is skew-symmetric and for its skew-adjoint $-A^{\star}\colon D(-A^{\star})\subseteq\Ls^2_{\mathcal{H}}(I)\rightarrow\Ls^2_{\mathcal{H}}(I)$
$$
-A^{\star}x=\displaystyle P_1(\mathcal{H}x)' \;\text{ with }\; D(-A^{\star})=\bigl\{x\in \Ls^2_{\mathcal{H}}(I)\:;\: (\mathcal{H}x)'\in\Ls^2_{\mathcal{H}}(I) \text{ and } \mathcal{H}x \text{ vanishes at } \infty \text{ of } I\bigr\}.
$$
holds.
\end{prop}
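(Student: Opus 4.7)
The plan is to combine integration by parts in $\Ls^2_\mathcal{H}(I)$ with the adjoint inclusion already given by Lemma \ref{NUN-DOCH-UB-LEM} (and its analogues for the other intervals), and then to sharpen the latter via a testing/truncation argument at infinity.

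For the skew-symmetry, given $x,y \in D(A)$, the weighted Cauchy--Schwarz estimate
\[
\int_I \bigl|((\mathcal{H}x)')^{\star}P_1 \mathcal{H}y\bigr|\,\dd\xi \;\leqslant\; \|P_1\|\cdot \|(\mathcal{H}x)'\|_{\Ls^2_\mathcal{H}(I)}\cdot \|y\|_{\Ls^2_\mathcal{H}(I)},
\]
which uses the isometry $\|\mathcal{H}y\|_{\Ls^2_{\mathcal{H}^{-1}}(I)} = \|y\|_{\Ls^2_\mathcal{H}(I)}$, shows that $\tfrac{\dd}{\dd\xi}[(\mathcal{H}x)^{\star}P_1\mathcal{H}y]$ lies in $\Ls^1(I)$. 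Hence the pointwise limits of $(\mathcal{H}x)^{\star}P_1\mathcal{H}y$ at the endpoints of $I$ exist, and Lemma \ref{lem:Sob} reduces the symmetrised pairing to the boundary term
\[
\langle Ax,y\rangle_{\Ls^2_\mathcal{H}(I)} + \langle x, Ay\rangle_{\Ls^2_\mathcal{H}(I)} \;=\; \bigl[(\mathcal{H}x)^{\star}P_1\mathcal{H}y\bigr]_{\partial I},
\]
which vanishes by the boundary conditions on $D(A)$ (understood as usual to include vanishing at infinite endpoints, needed for skew-symmetry).

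For the adjoint, Lemma \ref{NUN-DOCH-UB-LEM} supplies the formula $A^{\star}x = -P_1(\mathcal{H}x)'$ together with the inclusion $D(A^{\star}) \subseteq \{x \in \Ls^2_\mathcal{H}(I) : (\mathcal{H}x)' \in \Ls^2_\mathcal{H}(I)\}$ for $I = [0,\infty)$; a reflection $\xi \leftrightarrow -\xi$ covers the semi-axis $(-\infty,b]$, splitting at an interior point covers $I = \mathbb{R}$, and the bounded case $I = [a,b]$ follows from Lemma \ref{B-LEM}. To obtain the precise vanishing at infinity in $D(-A^{\star})$, I would test the identity $\langle A^{\star}x,y\rangle_{\Ls^2_\mathcal{H}(I)} = \langle x, Ay\rangle_{\Ls^2_\mathcal{H}(I)}$ against $y \in D(A)$ whose image $\mathcal{H}y$ is prescribed arbitrarily on a sliding bounded window and smoothly cut off outside; such $y = \mathcal{H}^{-1}(\mathcal{H}y)$ is admissible by the local invertibility of $\mathcal{H}$ from \eqref{STANDARD-ASS}. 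Integrating by parts on $[0,R]$ and sending $R \to \infty$ then forces $(\mathcal{H}x(R))^{\star} P_1 \eta \to 0$ along a suitable sequence for arbitrary $\eta \in \mathbb{C}^d$, and invertibility of $P_1$ yields $\mathcal{H}x \to 0$ at infinity. The converse inclusion is a direct integration-by-parts verification, where all boundary contributions are discarded by the vanishing of $\mathcal{H}y$ at finite endpoints and of $\mathcal{H}x$ at infinity.

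The main obstacle is precisely this sharpening of the inclusion from Lemma \ref{NUN-DOCH-UB-LEM} to the pointwise vanishing at infinity. In the bounded-$\mathcal{H}$ case of Lemma \ref{thm:P00} this step was immediate via the Barb{\u a}lat-type Lemma \ref{LEM-0}(b); for unbounded $\mathcal{H}$, however, the remark following Lemma \ref{LEM-0} shows that $x \in \Ls^2_\mathcal{H}$ with $(\mathcal{H}x)' \in \Ls^2_\mathcal{H}$ does not force $\mathcal{H}x \to 0$, so no such shortcut is available and one must execute the duality argument described above, carefully verifying that the constructed test functions $y$ indeed lie in $D(A)$ in spite of the unbounded weight.
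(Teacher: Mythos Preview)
Your overall plan matches the paper's: reduce to $I=[0,\infty)$, use Lemma~\ref{NUN-DOCH-UB-LEM} for the formula $A^\star x=-P_1(\mathcal{H}x)'$ together with the inclusion $D(A^\star)\subseteq\{x:(\mathcal{H}x)'\in\Ls^2_{\mathcal{H}}\}$, and then argue both inclusions for the vanishing-at-infinity condition by integration by parts. The converse direction is handled correctly, and the paper does it the same way (invoking Lemma~\ref{LEM-0}(a) to know $\mathcal{H}x$ is bounded, then integrating by parts).

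The gap is in your sharpening step. A test function $y\in D(A)$ with $\mathcal{H}y$ compactly supported in a ``sliding bounded window'' gives you nothing: for such $y$ the adjoint identity $\langle Ay,x\rangle_{\mathcal{H}}=\langle y,A^\star x\rangle_{\mathcal{H}}$ holds automatically (it is just the definition of $x\in D(A^\star)$ applied to an element of $D(A)$), and integration by parts on $[0,R]$ for $R$ beyond the support of $\mathcal{H}y$ produces no boundary term involving $\mathcal{H}x(R)$ at all. So neither the full-line identity nor the truncated one extracts any information about $\mathcal{H}x$ at infinity from compactly supported $\mathcal{H}y$. Your sentence ``forces $(\mathcal{H}x(R))^\star P_1\eta\to 0$ along a suitable sequence'' therefore does not follow from what you describe.

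The paper's key idea, which you are missing, is to test against $x$ \emph{itself}: one constructs $\tilde{x}$ with $\mathcal{H}\tilde{x}(0)=0$ and $\mathcal{H}\tilde{x}=P_1\mathcal{H}x$ on $[1,\infty)$, so that $\tilde{x}\in D(A)$. Integrating by parts on $[0,R]$ against $\tilde{x}$ produces the boundary term $(\mathcal{H}x(R))^\star(\mathcal{H}x(R))=|\mathcal{H}x(R)|^2$; since the two bulk integrals converge as $R\to\infty$ to quantities that agree (precisely because $x\in D(A^\star)$ and $\tilde{x}\in D(A)$), one obtains $|\mathcal{H}x(R)|^2\to 0$ directly. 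The point is that to detect the behaviour of $\mathcal{H}x$ at infinity one must use a test function whose image under $\mathcal{H}$ does \emph{not} vanish there, and the only such function guaranteed to lie in $\Ls^2_{\mathcal{H}}$ with derivative in $\Ls^2_{\mathcal{H}}$ is essentially $x$ itself.
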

\begin{proof} It is enough to consider $I=[0,\infty)$. By Lemma \ref{NUN-DOCH-UB-LEM} it suffices to show that for $x\in \Ls_\mathcal{H}^2(0,\infty)$ with $\partial\mathcal{H}x\in \Ls^2_\mathcal{H}(0,\infty)$ we have $x \in D(A^\star)$ if and only if $\mathcal{H}x(R)\to 0$ as $R\to\infty$. For this, let $x\in \Ls_\mathcal{H}^2(0,\infty)$ with $\partial\mathcal{H}x\in \Ls^2_\mathcal{H}(0,\infty)$. Assume $x\in D(A^\star)$. We find $\tilde{x}\in \Ls_\mathcal{H}^2(0,\infty)$ with $\partial\mathcal{H}\tilde x \in \Ls^2_\mathcal{H}(0,\infty)$ and $\mathcal{H}\tilde{x}(0)=0$ and $\mathcal{H}x=P_1^{-1}\mathcal{H}\tilde{x}$ on $[1,\infty)$. It follows that $\tilde{x}\in D(A)$. Hence, by the rule of integration by parts, we deduce that for all $R\geqslant 1$ 
\begin{align*}
    \langle AP_1^{-1}\tilde x,P_1^{-1}\chi_{[0,R]}\mathcal{H}x\rangle & = \langle \mathcal{H}\tilde{x},-\chi_{[0,R]} (\mathcal{H}x)' \rangle + (\mathcal{H}x)(R)(\mathcal{H}x)(R) \\ &=\langle P_1^{-1}\mathcal{H}\tilde x,-\chi_{[0,R]} P_1(\mathcal{H}x)' \rangle + (\mathcal{H}x)(R)(\mathcal{H}x)(R)
    \\ & = \langle P_1^{-1}\mathcal{H}\tilde x,\chi_{[0,R]}A^\star x\rangle+ (\mathcal{H}x)(R)(\mathcal{H}x)(R).
\end{align*}
Letting $R\to\infty$ yields
\[
       \langle A\tilde x, P_1^{-1}\mathcal{H}x\rangle=\langle P_1^{-1}\mathcal{H}\tilde x,A^\star x\rangle,
\]
and so, necessarily, $\mathcal{H}x(R)\to 0$ as $R\to\infty$. The remaining implication follows from the fact that $\mathcal{H}x$ is bounded for all $x\in \Ls_\mathcal{H}^2(0,\infty)$ with $(\mathcal{H}x)'\in \Ls_\mathcal{H}^2(0,\infty)$ by Lemma \ref{LEM-0} and again using integration by parts.
\end{proof}

\bigskip

%%%%%%%%%%%%%%%%%%%%%%%%%%%%%%%%%%%%%%%%%%%%%%%%%%%
%%%%%%%%%%%%%%%%%%%%%%%%%%%%%%%%%%%%%%%%%%%%%%%%%%%
%%                                               %%
%% 6 Port-Hamiltonian systems on networks      I  %%
%%                                               %%
%%%%%%%%%%%%%%%%%%%%%%%%%%%%%%%%%%%%%%%%%%%%%%%%%%%
%%%%%%%%%%%%%%%%%%%%%%%%%%%%%%%%%%%%%%%%%%%%%%%%%%%

\section{Port-Hamiltonian operators on networks I}\label{SEC:5}\smallskip

In the remainder of this article we study when the port-Hamiltonian operator generates a contraction semigroup. This question has attracted a lot of interest in the past. We restrict to the latest papers and mention that Jacob, Morris, Zwart \cite{JMZ2015} treat finite intervals and finite graphs with finite intervals as edges, Jacob, Kaiser \cite{JK} treat the semi-axis under the assumption that $\mathcal{H}$ is bounded and bounded away from zero, and infinite networks with finite intervals as edges. Results on the semi-axis with $\mathcal{H}$ being bounded but not necessarily being bounded away from zero also exist, see Jacob, Wegner \cite{JW2018}, but characterize when the port-Hamiltonian operator generates a (possibly non-contractive) $\Cnull$-semigroup.

\smallskip

In this first part of our discussion on port-Hamiltonian operators on networks, we revisit the semi-axis and then consider networks, where we restrict to the case that the edge lengths have a positive lower bound. This appears to be the ``standard assumption'' in most of the literature, see Schubert, Veseli\'c, Lenz \cite{LSV2014}. The next section will be devoted to the technically more demanding case of networks with arbitrarily small edges. Here rather little seems to be known, see \cite[Appendix]{LSV2014}. 

\smallskip

For our convenience we shall use $\langle x,y\rangle_{\Ls^2(0,\infty)}$ or expressions similar to that also for $x,y\notin \Ls^2(0,\infty)$, but $x^\star y \in \Ls^1(0,\infty)$.

\smallskip

%%%%%%%%%%%%%%%%%%%%%%%%%%%%%%%%%%%%%%%%%%%%%%%%%%%
%%%%%%%%%%%%%%%%%%%%%%%%%%%%%%%%%%%%%%%%%%%%%%%%%%%
%%                                               %%
%% 6.1 The special case of the semi-axis         %%
%%                                               %%
%%%%%%%%%%%%%%%%%%%%%%%%%%%%%%%%%%%%%%%%%%%%%%%%%%%
%%%%%%%%%%%%%%%%%%%%%%%%%%%%%%%%%%%%%%%%%%%%%%%%%%%

\subsection{The semi-axis}\label{SEC:4}\smallskip

Consider the situation of Proposition \ref{SKEW-INT} with $I=[0,\infty)$. Let $S\in\mathbb{C}^{d\times{}d}$ be unitary be such that $P_1=S^{\star}\Delta{}S$ holds and $\Delta=\diag(\Lambda,-\Theta)$ where $\Lambda=\diag(\lambda_1,\dots,\lambda_{n_+})\in\mathbb{C}^{n_+\times n_+}$ and $\Theta=\diag(\theta_1,\dots,\theta_{n_-})\in\mathbb{C}^{n_-\times n_-}$ with $\lambda_k$, $\theta_j>0$ for $k\in\{1,\dots,n_+\}$, $j\in\{1,\dots,n_-\}$. We define $\mathcal{G}_1:=\mathbb{C}^{n_+}$ with scalar product $\langle{}\cdot,\cdot\rangle_{\mathcal{G}_1}:=\langle{}\cdot,\Lambda{}\,\cdot\rangle{}$ and $\mathcal{G}_2:=\mathbb{C}^{n_-}$ with scalar product $\langle{}\cdot,\cdot\rangle_{\mathcal{G}_2}:=\langle{}\cdot,\Theta{}\,\cdot\rangle{}$. We put $G_1=\{0\}$ if $n_-=0$ and $G_2=\{0\}$ if $n_+=0$. We denote by
$$
\pr_+\colon\mathbb{C}^d\rightarrow\mathbb{C}^{n_+},\;\text{ and }\;\pr_+(x):=(x_k)_{k\in\{1,\dots,{n_+}\}},\;\;\pr_-\colon\mathbb{C}^d\rightarrow\mathbb{C}^{n_-},\;\;\pr_-(x):=(x_k)_{k\in\{n_++1,\dots,{d}\}}
$$
the projections on the first $n_+$ and the last $n_-$ coordinates, respectively. Let $\Omega$ be the standard symmetric form and let $\omega$ be the standard unitary form on $\mathcal{G}_1\oplus\mathcal{G}_2$. Now we define
$$
F\colon \gr(A^{\star})\rightarrow\mathcal{G}_1\oplus\mathcal{G}_2, \;\; (x,A^{\star}x)\mapsto\bigl(\pr_+(S\mathcal{H}x)(0),\pr_-(S\mathcal{H}x)(0)\bigr).
$$
This defines a boundary system.

\smallskip

\begin{lem}\label{BS-SEMI-AXIS} The quadrupel $(\Omega, \mathcal{G}_1, \mathcal{G}_2, F, \omega)$ is a boundary system for $A$.
\end{lem}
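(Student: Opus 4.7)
The plan is to check the three defining properties of a boundary system in turn: well-definedness and linearity of $F$, the form identity, and surjectivity of $F$. Well-definedness and linearity are essentially immediate once I observe that Proposition~\ref{SKEW-INT} forces $(\mathcal{H}x)'\in\Ls^2_{\mathcal{H}}(0,\infty)$ whenever $x\in D(A^\star)$, and hence by Lemma~\ref{lem:Sob} that $\mathcal{H}x$ admits a continuous representative on $[0,\infty)$ so that the point evaluation $(\mathcal{H}x)(0)$ is meaningful in $\mathbb{C}^d$.

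The heart of the matter is the form identity. For $x,y\in D(A^\star)$, I would expand
\[
\Omega((x,A^\star x),(y,A^\star y)) = \langle x,A^\star y\rangle_{\Ls^2_{\mathcal{H}}(0,\infty)} + \langle A^\star x,y\rangle_{\Ls^2_{\mathcal{H}}(0,\infty)}
\]
and substitute the formula $A^\star z=-P_1(\mathcal{H}z)'-P_0\mathcal{H}z$ from Proposition~\ref{SKEW-INT}. The two contributions carrying $P_0$ cancel, owing to $P_0^\star=-P_0$; the two contributions carrying $P_1$ combine, via $P_1^\star=P_1$ and the product rule, into $-\int_0^\infty \frac{\dd}{\dd\xi}\bigl[(\mathcal{H}x)^\star P_1 \mathcal{H}y\bigr]\, \dd\xi$, which by Lemma~\ref{lem:Sob} equals $(\mathcal{H}x)(0)^\star P_1(\mathcal{H}y)(0) - \lim_{R\to\infty}(\mathcal{H}x)(R)^\star P_1(\mathcal{H}y)(R)$. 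The limit at $+\infty$ vanishes by Lemma~\ref{LEM-0}(b), since $\mathcal{H}$ is bounded here. Writing $P_1=S^\star\Delta S$ with $\Delta=\diag(\Lambda,-\Theta)$ and splitting $(S\mathcal{H}x)(0)$ and $(S\mathcal{H}y)(0)$ into their $\pr_+$- and $\pr_-$-components then converts the remaining boundary term at $0$ into $\langle\pr_+(S\mathcal{H}x)(0),\pr_+(S\mathcal{H}y)(0)\rangle_{\mathcal{G}_1}-\langle\pr_-(S\mathcal{H}x)(0),\pr_-(S\mathcal{H}y)(0)\rangle_{\mathcal{G}_2}$, which is exactly $\omega(F(x,A^\star x),F(y,A^\star y))$.

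For surjectivity, given $(a,b)\in\mathcal{G}_1\oplus\mathcal{G}_2$, I would pick a cutoff $\phi\in \operatorname{C}_c^\infty([0,\infty))$ with $\phi(0)=1$ and $\supp\phi\subseteq[0,1]$, and define $x\coloneqq\mathcal{H}^{-1}\phi\cdot S^\star(a,b)$ on $[0,1]$, extended by zero on $(1,\infty)$. The local boundedness and strict positivity of $\mathcal{H}$ on $[0,1]$ granted by \eqref{STANDARD-ASS} ensure that $x\in\Ls^2_{\mathcal{H}}(0,\infty)$; moreover $\mathcal{H}x=\phi\cdot S^\star(a,b)$ is smooth and compactly supported, so $x\in D(A^\star)$ by Proposition~\ref{SKEW-INT}, and by construction $F(x,A^\star x)=(a,b)$. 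The only genuine obstacle throughout is the vanishing of the boundary term at $+\infty$: this is exactly where the hypothesis that $\mathcal{H}$ is bounded enters, via the Barb{\u a}lat-type Lemma~\ref{LEM-0}(b); the interior integration by parts and the algebra with $\Delta$ are routine bookkeeping.
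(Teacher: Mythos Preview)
Your proof is correct and follows essentially the same approach as the paper: expand $\Omega$, cancel the $P_0$ terms via skew-adjointness, integrate by parts to obtain the boundary term at $0$ (with the term at infinity vanishing by Lemma~\ref{LEM-0}(b)), and then diagonalize $P_1=S^\star\Delta S$ to rewrite that boundary term as $\omega(F\cdot,F\cdot)$. Your surjectivity argument via a compactly supported cutoff is in fact more explicit than the paper's, which merely records that surjectivity is immediate.
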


\begin{proof} We compute
\begin{eqnarray*}
\Omega\bigl((x,A^{\star}x),(y,A^{\star}y)\bigr) & = & \langle{}x,A^{\star}y\rangle_{\Ls^2_{\mathcal{H}}(0,\infty)} + \langle{}A^{\star}x,y\rangle_{\Ls^2_{\mathcal{H}}(0,\infty)}\\
& = & \langle{}x,-P_1(\mathcal{H}y)'-P_0\mathcal{H}y\rangle_{\Ls^2_{\mathcal{H}}(0,\infty)} + \langle{}-P_1(\mathcal{H}x)'-P_0\mathcal{H}x,y\rangle_{\Ls^2_{\mathcal{H}}(0,\infty)}\\
& = & -\langle{}x,P_1(\mathcal{H}y)'\rangle_{\Ls^2_{\mathcal{H}}(0,\infty)}-\langle{}x,P_0\mathcal{H}y\rangle_{\Ls^2_{\mathcal{H}}(0,\infty)}\\
& & \phantom{xxxxxxx}- \langle{}P_1(\mathcal{H}x)',y\rangle_{\Ls^2_{\mathcal{H}}(0,\infty)}-\langle{}P_0\mathcal{H}x,y\rangle_{\Ls^2_{\mathcal{H}}(0,\infty)}\\
& = & -\langle{}x,\mathcal{H}P_1(\mathcal{H}y)'\rangle_{\Ls^2(0,\infty)}-\langle{}x,\mathcal{H}P_0\mathcal{H}y\rangle_{\Ls^2(0,\infty)}\\
& & \phantom{xxxxxxx}- \langle{}P_1(\mathcal{H}x)',\mathcal{H}y\rangle_{\Ls^2(0,\infty)}-\langle{}P_0\mathcal{H}x,\mathcal{H}y\rangle_{\Ls^2(0,\infty)}\\
& = & -\langle{}x,\mathcal{H}P_1(\mathcal{H}y)'\rangle_{\Ls^2(0,\infty)}-\langle{}x,\mathcal{H}P_0\mathcal{H}y\rangle_{\Ls^2(0,\infty)}\\
& & \phantom{xxxxxxx}- \langle{}P_1(\mathcal{H}x)',\mathcal{H}y\rangle_{\Ls^2(0,\infty)}+\langle{}x,\mathcal{H}P_0\mathcal{H}y\rangle_{\Ls^2(0,\infty)}\\
& = & -\int_0^{\infty} x(\xi)^{\star}\mathcal{H}(\xi)P_1(\mathcal{H}y)'(\xi)\dd\xi - \int_0^{\infty}(P_1(\mathcal{H}x)'(\xi))^{\star}\mathcal{H}(\xi)y(\xi)\dd\xi\\
& = & -\lim_{R\rightarrow\infty}\int_0^{R} (\mathcal{H}x)(\xi)^{\star}P_1(\mathcal{H}y)'(\xi)\dd\xi - \int_0^{\infty}(P_1(\mathcal{H}x)'(\xi))^{\star}(\mathcal{H}y)(\xi)\dd\xi\\
& = & -\lim_{R\rightarrow\infty}\Bigl( (\mathcal{H}x)(\xi)^{\star}P_1(\mathcal{H}y)(\xi)\Big|_0^R -\int_0^{R}(\mathcal{H}x)'(\xi)^{\star}P_1(\mathcal{H}y)(\xi)\dd\xi \Bigr)\\
& & \phantom{xxxxxxx}- \int_0^{\infty}(P_1(\mathcal{H}x)'(\xi))^{\star}(\mathcal{H}y)(\xi)\dd\xi\\
& = & -\lim_{R\rightarrow\infty}(\mathcal{H}x)(R)^{\star}P_1(\mathcal{H}y)(R) +(\mathcal{H}x)(0)^{\star}P_1(\mathcal{H}y)(0)\\
& & \phantom{xxxxxxx}+\int_0^{\infty}(\mathcal{H}x)'(\xi)^{\star}P_1(\mathcal{H}y)(\xi)\dd\xi - \int_0^{\infty}(\mathcal{H}x)'(\xi)^{\star}P_1(\mathcal{H}y)(\xi)\dd\xi \\
& = & (S\mathcal{H}x)(0)^{\star}\Delta(S\mathcal{H}y)(0)\\
& = & \begin{bmatrix}\pr_+(S\mathcal{H}x)(0)\\\pr_-(S\mathcal{H}x)(0)\\\end{bmatrix}^{\star} \begin{bmatrix} \Lambda & 0  \\0 & -\Theta  \\\end{bmatrix} \begin{bmatrix}\pr_+(S\mathcal{H}y)(0)\\\pr_-(S\mathcal{H}y)(0)\\\end{bmatrix}\\
& = & \bigl\langle{}\pr_+(S\mathcal{H}x)(0),\pr_+(\mathcal{H}y)(0)\bigr\rangle{}_{\mathcal{G}_1}-\bigl\langle{}\pr_-(S\mathcal{H}x)(0),\pr_-(\mathcal{H}y)(0)\bigr\rangle{}_{\mathcal{G}_2}\\
& = & \omega\bigl(F(x,A^{\star}x),F(y,A^{\star}y)\bigr)
\end{eqnarray*}
and we observe that $F$ is surjective.
\end{proof}

\smallskip

Applying Theorem \ref{CLASS-THM}, we get the following classification of generators of contraction semigroups.

\smallskip

\begin{thm}\label{GEN-SEMI-AXIS} Let $\mathcal{H}$, $P_1$, $P_0$ be as in Proposition \ref{SKEW-INT} and $I=[0,\infty)$. Let $\Theta$, $\Lambda$, $n_+$ and $n_-$, $A$ be as above. The operator $H\colon D(H)\subseteq \Ls^2_{\mathcal{H}}(0,\infty)\rightarrow\Ls^2_{\mathcal{H}}(0,\infty)$, $-Hx=P_1(\mathcal{H}x)'+P_0\mathcal{H}x$ for $x\in D(H)\supseteq D(A)$ generates a $\Cnull$-semigroup of contractions if and only if there is a contraction $T\colon\mathcal{G}_2\rightarrow\mathcal{G}_1$ such that
$$
D(H)=\bigl\{x\in \Ls^2_{\mathcal{H}}(0,\infty)\:;\: (\mathcal{H}x)'\in\Ls^2_{\mathcal{H}}(0,\infty)\text{ and } T\pr_-(S\mathcal{H}x(0))=\pr_+(S\mathcal{H}x(0))\bigr\}
$$
holds.\hfill\qed
\end{thm}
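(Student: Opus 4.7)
The plan is to synthesise the results already established: Proposition \ref{SKEW-INT} identifies $A$ as a closed, densely defined, skew-symmetric operator and computes $A^\star$ explicitly; Lemma \ref{BS-SEMI-AXIS} exhibits a boundary system for $A$ in which $\Omega$ is the standard symmetric form and $\omega$ is the standard unitary form; and Theorem \ref{CLASS-THM} classifies maximal dissipative extensions of $-A$ via contractions $T\colon\mathcal{G}_2\to\mathcal{G}_1$. The Lumer--Phillips theorem then converts maximal dissipativity into generation of a contraction $C_0$-semigroup.

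First, I would set $H_0 := A$. Using Proposition \ref{SKEW-INT}, the formula $-Hx = P_1(\mathcal{H}x)' + P_0\mathcal{H}x$ together with $D(H)\supseteq D(A)$ forces $-H$ to be a restriction of $-H_0^\star$ that extends $H_0$; equivalently, $H$ is a restriction of $H_0^\star$ extending $-H_0$. Since $D(H)\supseteq D(A)$ and $D(A)$ is dense, $H$ is densely defined. Next I would apply Theorem \ref{CLASS-THM}: $H$ is a maximal dissipative extension of $-H_0$ and restriction of $H_0^\star$ if and only if there exists a contraction $T\colon\mathcal{G}_2\to\mathcal{G}_1$ with
\[
D(H)=\{x\in D(H_0^\star)\:;\:F_1(x)=TF_2(x)\}.
\]
Reading off the maps from Lemma \ref{BS-SEMI-AXIS} gives $F_1(x)=\pr_+(S\mathcal{H}x)(0)$ and $F_2(x)=\pr_-(S\mathcal{H}x)(0)$, while $D(H_0^\star)=D(A^\star)=\{x\in \Ls^2_{\mathcal{H}}(0,\infty)\:;\:(\mathcal{H}x)'\in \Ls^2_{\mathcal{H}}(0,\infty)\}$ by Proposition \ref{SKEW-INT}. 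Substituting these expressions yields exactly the domain description asserted in the theorem.

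Finally, I would close the argument by invoking the Lumer--Phillips theorem, which states that a densely defined linear operator on a Hilbert space generates a $C_0$-semigroup of contractions if and only if it is maximal dissipative. Combined with the previous step this gives both implications simultaneously. There is no real technical obstacle here: the hard work is concealed in Theorem \ref{CLASS-THM}, Lemma \ref{BS-SEMI-AXIS} and Proposition \ref{SKEW-INT}. The only point that requires care is keeping track of the sign convention, namely that Theorem \ref{CLASS-THM} characterises maximal dissipative extensions of $-H_0$ (not of $H_0$), so that the operator with the formula $P_1(\mathcal{H}\cdot)' + P_0\mathcal{H}\cdot$ must be identified with $-H$ rather than with $H$ itself.
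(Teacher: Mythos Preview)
Your proposal is correct and matches the paper's approach exactly: the theorem is stated with a \qed\ because it follows immediately by combining Lemma \ref{BS-SEMI-AXIS}, Theorem \ref{CLASS-THM}, Proposition \ref{SKEW-INT}, and the Lumer--Phillips theorem. Your tracking of the sign convention ($H_0=A$, $H$ extends $-H_0$ and is a restriction of $H_0^\star$) is precisely what is needed to make the application of Theorem \ref{CLASS-THM} transparent.
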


\smallskip

Notice that $T\in\mathbb{C}^{n_+\times n_-}$ is a contraction from $\mathcal{G}_2$ to $\mathcal{G}_1$ if and only if $\langle{}Tx,\Lambda{}Tx\rangle{}_{\mathbb{C}^{n_+}}\leqslant\langle{}x,\Theta{}x\rangle{}_{\mathbb{C}^{n_-}}$ holds for all $x\in\mathbb{C}^{n_-}$. Therefore, the question if the port-Hamiltonian operator generates a contraction semi-group can be answered via a matrix condition that involves only the diagonalization of the matrix $P_1$.

%%%%%%%%%%%%%%%%%%%%%%%%%%%%%%%%%%%%%%%%%%%%%%%%%%%%%%%
%%%%%%%%%%%%%%%%%%%%%%%%%%%%%%%%%%%%%%%%%%%%%%%%%%%%%%%
%%                                                   %%
%% 6.2 Networks with uniformly positive edge length  %%
%%                                                   %%
%%%%%%%%%%%%%%%%%%%%%%%%%%%%%%%%%%%%%%%%%%%%%%%%%%%%%%%
%%%%%%%%%%%%%%%%%%%%%%%%%%%%%%%%%%%%%%%%%%%%%%%%%%%%%%%

\subsection{Networks with uniformly positive edge length}\label{SEC:positive-edge}

Let $\Gamma=(V,E)$ be a graph, $V$ the set of vertices and $E$ the set of edges. We restrict ourselves to countable many edges and vertices. Multiple edges, infinitely many vertices or edges, as well as edges for which source and target coincide are allowed. Let $a$, $b\colon E\rightarrow\mathbb{R}\cup\{+\infty,-\infty\}$ be maps such that $a(e)<b(e)$ holds for every $e\in E$. Let $E_{\ell}=\{e\in E\:;\:a(e)>-\infty\}$ and $E_{r}=\{e\in E\:;\:b(e)<+\infty\}$ be the sets of edges where the associated interval $(a(e),b(e))$ is bounded from below, respectively from above.

\smallskip

Let
$$
\mathcal{H}\colon \bigcupdot_{e\in E}(a(e),b(e))\rightarrow \mathbb{C}^{d\times d}
$$
be a map. Denote by $\mathcal{H}_e\colon(a(e),b(e))\rightarrow\mathbb{C}^{d\times d}$ the restriction $ \mathcal{H}_e=\mathcal{H}|_{(a(e),b(e))}$. Let each $\mathcal{H}_e$ be measurable and such that for almost every $\xi\in(a(e),b(e))$ the matrix $\mathcal{H}_e(\xi)$ is Hermitian. Assume that each $\mathcal{H}_e$ is bounded and strictly positive on bounded subsets of $(a(e),b(e))$.

\smallskip

We consider the Hilbert space direct sum
$$
X\coloneqq \Ls^2_{\mathcal{H}}(\Gamma)=\bigoplus_{e\in E}\Ls^2_{\mathcal{H}_e}(a(e),b(e))
$$
and we denote its elements by $x=(x_e)_{e\in E}$, $x_e\in\Ls^2_{\mathcal{H}_e}(a(e),b(e))$ for $e\in E$. For $P_1$, $P_0\in\mathbb{C}^{d\times d}$ with $P_1^{\star}=P_1$ and $P_0^{\star}=-P_0$ we consider the operator $A\colon D(A)\subseteq X\rightarrow X$ given by
\begin{equation}\label{eq:net}
\begin{array}{rcl}
Ax&=&\bigl(P_1(\mathcal{H}_ex_e)'+P_0\mathcal{H}_ex_e\bigl)_{e\in E} \; \text{ for } \; x=(x_e)_{e\in E}\\[9pt]
D(A)&=&\bigl\{x\in X\:;\: (P_1(\mathcal{H}_ex_e)'+P_0\mathcal{H}_ex_e)_{e\in E}\in X \text{ and } \mathcal{H}_ex_e|_{\partial(a(e),b(e))}=0\bigr\}.
\end{array}
\end{equation}
Note that in particular, for all $x\in D(A)$, we have $(\mathcal{H}_ex_e)'\in \Ls^2_{\mathcal{H}_e}(a(e),b(e))$ so that the second condition in \eqref{eq:net} is well-defined. 

\medskip

For the proof of the next theorem we need the following result by Picard \cite[Lemma 1.1]{Picard2012}.

\begin{lem}\label{lem:pi12} Let $X$ be Hilbert space, $H_i\colon D(H_i)\subseteq X\to X$ be densely defined and closed operators for $i\in\{1,2\}$. Let $(Q_n)_{n\in\mathbb{N}}$ be a family of orthogonal projections that converge strongly to $\id_X$. Assume that $Q_n H_1\subseteq H_1 Q_n$, $Q_n H_2 \subseteq H_2 Q_n$ and $Q_nH_2Q_n \in L(X)$ holds for all $n\in \mathbb{N}$. Then we have
\[
\overline{H_1^{\star}+H_2^{\star}} = (H_1 + H_2)^{\star}= \text{s\,--}\hspace{-3.5pt}\lim_{n\to\infty} (Q_n(H_1+H_2)Q_n)^{\star}=\text{s\,--}\hspace{-3.5pt}\lim_{n\to\infty} (Q_nH_1Q_n)^{\star}+(Q_nH_2Q_n)^{\star},
\]
\[
(Q_nH_1Q_n)^{\star} = Q_nH_1^{\star}Q_n\;\text{ and }\;(Q_nH_2Q_n)^{\star}=Q_nH_2^{\star}Q_n
\]
for every $n\in\mathbb{N}$.\hfill\qed
\end{lem}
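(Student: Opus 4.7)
The plan is to reduce the unbounded-operator identity to a bounded-operator one via the truncations $Q_n$, exploiting that the intertwining $Q_n H_i \subseteq H_i Q_n$ transfers, by taking adjoints, to $Q_n H_i^\star \subseteq H_i^\star Q_n$; consequently $Q_n$ leaves every $D(H_i)$ and $D(H_i^\star)$ invariant. First I would extract the structural consequences of $Q_n H_2 Q_n \in L(X)$: it forces $Q_n X \subseteq D(H_2)$, and combined with the intertwining it yields $H_2 Q_n x \in Q_n X$ for every $x$, i.e., $H_2 Q_n = Q_n H_2 Q_n$ as a bounded operator; the adjoint analogue gives $Q_n X \subseteq D(H_2^\star)$ with $H_2^\star Q_n = Q_n H_2^\star Q_n$ bounded. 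The formulas $(Q_n H_i Q_n)^\star = Q_n H_i^\star Q_n$ follow by a direct calculation: $\langle Q_n H_i Q_n x, y\rangle = \langle H_i Q_n x, Q_n y\rangle$ is continuous in $x \in D(H_i)$ if and only if $Q_n y \in D(H_i^\star)$, in which case it equals $\langle x, Q_n H_i^\star Q_n y\rangle$. Since $Q_n H_2 Q_n$ is bounded and $Q_n H_1 Q_n$ densely defined, the standard adjoint-of-sum identity yields
\[
(Q_n(H_1+H_2)Q_n)^\star = (Q_n H_1 Q_n)^\star + (Q_n H_2 Q_n)^\star = Q_n(H_1^\star + H_2^\star)Q_n.
\]

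For the central equality $\overline{H_1^\star + H_2^\star} = (H_1+H_2)^\star$, the inclusion $\subseteq$ is immediate: on $D(H_1^\star)\cap D(H_2^\star)$ one has $H_1^\star + H_2^\star \subseteq (H_1+H_2)^\star$ by linearity of the defining sesquilinear form, and $(H_1+H_2)^\star$ is closed. The reverse inclusion is the main obstacle. Given $y \in D((H_1+H_2)^\star)$, I would take $Q_n y$ as the approximating sequence. Membership $Q_n y \in D(H_2^\star)$ is free from $Q_n X \subseteq D(H_2^\star)$; for $Q_n y \in D(H_1^\star)$, testing against $x \in D(H_1)$ and using $Q_n x \in D(H_1)\cap D(H_2)$ gives
\begin{align*}
\langle H_1 x, Q_n y \rangle & = \langle H_1 Q_n x, y\rangle = \langle (H_1+H_2) Q_n x, y\rangle - \langle H_2 Q_n x, y\rangle \\
 & = \langle x, Q_n(H_1+H_2)^\star y\rangle - \langle x, Q_n H_2^\star Q_n y\rangle,
\end{align*}
where the crucial step is to rewrite $\langle H_2 Q_n x, y\rangle = \langle Q_n H_2 Q_n x, y\rangle = \langle x, Q_n H_2^\star Q_n y\rangle$ using $H_2 Q_n x \in Q_n X$ and the bounded-adjoint formula. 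This establishes $Q_n y \in D(H_1^\star)$ with $H_1^\star Q_n y \in Q_n X$, and adding the $H_2^\star$-contribution---noting that $H_i^\star Q_n = Q_n H_i^\star Q_n$ for $i=1,2$---leads to $(H_1^\star + H_2^\star)(Q_n y) = Q_n(H_1+H_2)^\star y$.

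Letting $n \to \infty$ and using $Q_n \to \id_X$ strongly, one obtains $Q_n y \to y$ and $(H_1^\star + H_2^\star)(Q_n y) \to (H_1+H_2)^\star y$, so $y$ lies in the closure of $H_1^\star + H_2^\star$ with the correct image; this closes the main identity. The strong-limit assertions follow as by-products: for every $y \in D((H_1+H_2)^\star)$,
\[
(Q_n(H_1+H_2)Q_n)^\star y = Q_n(H_1+H_2)^\star Q_n y = Q_n(H_1+H_2)^\star y \to (H_1+H_2)^\star y,
\]
where the middle identity uses the intertwining for $(H_1+H_2)^\star$ applied to $Q_n y \in D((H_1+H_2)^\star)$, and the decomposition into $(Q_n H_1 Q_n)^\star + (Q_n H_2 Q_n)^\star$ is the content of the first paragraph. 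The hardest point will be the rewriting step in the second paragraph: without the boundedness assumption on $Q_n H_2 Q_n$ there is no way to turn $\langle H_2 Q_n x, y\rangle$ into a pairing of the form $\langle x, \cdot\rangle$ without a priori knowing $y \in D(H_2^\star)$, which is precisely what one is trying to produce.
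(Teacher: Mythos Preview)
Your argument is correct. The key observations---that $Q_n H_i \subseteq H_i Q_n$ passes to the adjoints as $Q_n H_i^\star \subseteq H_i^\star Q_n$, that $Q_n H_2 Q_n \in L(X)$ forces $Q_n X \subseteq D(H_2)$ (and hence $Q_n X \subseteq D(H_2^\star)$ via the bounded adjoint), and that the approximants $Q_n y$ then land in $D(H_1^\star)\cap D(H_2^\star)$ with $(H_1^\star+H_2^\star)Q_n y = Q_n(H_1+H_2)^\star y$---are exactly what is needed, and your derivation of each is sound. The rewriting of $\langle H_2 Q_n x, y\rangle$ using $H_2 Q_n = Q_n H_2 Q_n$ is the only delicate step, and you handle it correctly.

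Note, however, that the paper does not give its own proof of this lemma: it is quoted verbatim from Picard \cite[Lemma~1.1]{Picard2012} and marked with a \qed. So there is no ``paper's approach'' to compare against; what you have supplied is a clean, self-contained proof of the cited result. One minor point worth making explicit in a write-up is that $D(H_1+H_2)$ and $D(H_1^\star+H_2^\star)$ are dense---which you use implicitly when forming $(H_1+H_2)^\star$ and $\overline{H_1^\star+H_2^\star}$---and this follows from $Q_n D(H_1)\subseteq D(H_1)\cap D(H_2)$ (respectively $Q_n D(H_1^\star)\subseteq D(H_1^\star)\cap D(H_2^\star)$) together with $Q_n\to\id_X$ strongly.
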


\smallskip

\begin{thm}\label{SKEW-NET} Let $\Gamma$, $\mathcal{H}$, and $A$ be defined as in \eqref{eq:net}. Then $A$ is densely defined, closed and skew-symmetric. Its skew-adjoint is given by
\begin{equation}
\begin{array}{rcl}
-A^{\star}x&=&\bigl(P_1(\mathcal{H}_ex_e)'+P_0\mathcal{H}_ex_e\bigl)_{e\in E} \; \text{ for } \; x=(x_e)_{e\in E}\\[9pt]
D(-A^{\star})&=&\bigl\{x\in X\:;\: (P_1(\mathcal{H}_ex_e)'+P_0\mathcal{H}_ex_e)_{e\in E}\in X\bigr\}.\end{array}
\end{equation}
\end{thm}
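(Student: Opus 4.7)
The plan is to reduce everything to the single-edge situation already treated in Proposition \ref{SKEW-INT}. Writing $X = \bigoplus_{e \in E} X_e$ with $X_e = \Ls^2_{\mathcal{H}_e}(a(e), b(e))$, for each $e \in E$ define $A_e \colon D(A_e) \subseteq X_e \to X_e$ by the same differential expression $A_e x_e = P_1(\mathcal{H}_e x_e)' + P_0 \mathcal{H}_e x_e$ together with the boundary condition $\mathcal{H}_e x_e|_{\partial(a(e),b(e))} = 0$. By Proposition \ref{SKEW-INT}, each $A_e$ is densely defined, closed and skew-symmetric, and its skew-adjoint $-A_e^{\star}$ acts by the same differential expression on $\{x_e \in X_e : (\mathcal{H}_e x_e)' \in X_e\}$. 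In the generality stated in the theorem, one really uses that $\mathcal{H}_e$ is bounded on each edge; on an unbounded edge where $\mathcal{H}_e$ is not globally bounded, Proposition \ref{thm:P00-1} applies under the extra hypothesis $P_0 = 0$ and introduces a vanishing-at-infinity condition in $D(-A_e^{\star})$.

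Next, I would identify $A$ with the orthogonal direct sum $\bigoplus_{e \in E} A_e$, whose natural domain consists of tuples $x = (x_e)_{e \in E}$ with $x_e \in D(A_e)$ for every $e$ and $\sum_{e} \|A_e x_e\|_{X_e}^{2} < \infty$. The summability condition is exactly the requirement $(P_1(\mathcal{H}_e x_e)' + P_0 \mathcal{H}_e x_e)_{e \in E} \in X$ appearing in \eqref{eq:net}, so the two descriptions of $D(A)$ coincide. Dense definedness, closedness and skew-symmetry of $A$ then follow edge-by-edge: a dense core is provided by the algebraic direct sum of the $D(A_e)$; closedness of an $\ell^2$-direct sum of closed operators is standard; and skew-symmetry is verified componentwise since each $A_e$ is skew-symmetric.

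For the adjoint, I would invoke the standard identity $\bigl(\bigoplus_{e} A_e\bigr)^{\star} = \bigoplus_{e} A_e^{\star}$ for Hilbert direct sums of densely defined closed operators. Unpacked, this says $x \in D(A^{\star})$ if and only if $x_e \in D(A_e^{\star})$ for every $e$ and $\sum_{e} \|A_e^{\star} x_e\|_{X_e}^{2} < \infty$, with $(A^{\star} x)_e = A_e^{\star} x_e$. Substituting the edge-wise description of $D(-A_e^{\star})$ from Proposition \ref{SKEW-INT} yields precisely the asserted formula for $D(-A^{\star})$. The step that warrants the most care is the proof of the direct-sum adjoint identity: the inclusion $\supseteq$ is obtained by testing $\langle x, A y\rangle_{X}$ against tuples $y$ supported on a single edge, using that finitely supported tuples form a core of $A$; the inclusion $\subseteq$ follows by assembling the edge-wise adjoint pairings into a well-defined element of $X$ with the help of the Cauchy--Schwarz inequality.
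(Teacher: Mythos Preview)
Your argument is correct and complete. It differs from the paper's route: the paper does not argue via the elementary identity $\bigl(\bigoplus_e A_e\bigr)^{\star}=\bigoplus_e A_e^{\star}$, but instead splits $A$ into its first-order part $H_1 x=(P_1(\mathcal{H}_e x_e)')_e$ and zero-order part $H_2 x=(P_0\mathcal{H}_e x_e)_e$, and then invokes Picard's Lemma~\ref{lem:pi12} with the finite-edge projections $Q_n=\chi_{E_n}$ to identify $(H_1+H_2)^{\star}$ as a strong limit of the adjoints $Q_n(H_1^{\star}+H_2^{\star})Q_n$. Both approaches ultimately feed Proposition~\ref{SKEW-INT} into a countable direct-sum framework, so they are close in spirit; yours is the more elementary and transparent of the two, while the paper's use of Lemma~\ref{lem:pi12} has the advantage of cleanly separating the unbounded multiplication operator $H_2$ (bounded only after restricting to finitely many edges) from the differential part. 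A minor point: in your sketch of the direct-sum adjoint identity the labels of the two inclusions and the roles of ``test against single edges'' versus ``assemble via Cauchy--Schwarz'' appear to be swapped, but the ingredients you list are exactly the right ones.
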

\begin{proof} The assertion follows from Proposition \ref{SKEW-INT} and Lemma \ref{lem:pi12}. Indeed, since the set $E$ is countable, we may let $(E_n)_{n\in\mathbb{N}}$ be an increasing family of finite subsets of $E$ with the property that $E=\bigcup_{n\in\mathbb{N}}E_n$. Denote $Q_n \coloneqq \chi_{E_n}$, the characteristic function for the set $E_n\subseteq E$ and apply Lemma \ref{lem:pi12} to $H_1 x = (P_1(\mathcal{H}_ex_e)')_{e\in E}$ and $H_2 x = (P_0\mathcal{H}_ex_e)_{e\in E}$ with
\begin{align*}
   D(H_1) &= \bigl\{x\in X\:;\: ((\mathcal{H}_ex_e)')_{e\in E}\in X \text{ and } \mathcal{H}_ex_e|_{\partial(a(e),b(e))}=0 \text{ for all }e\in E\bigr\}\\
   D(H_2) & = \bigl\{x\in X\:;\: (P_0\mathcal{H}_ex_e)_{e\in E}\in X\bigr\}.
\end{align*}
Then it is easy to see that $A=\overline{H_1+H_2}$. In particular, we have $A^\star = (H_1+H_2)^\star$. The conditions in Lemma \ref{lem:pi12} are easily checked; the formula for the adjoint now follows from Proposition \ref{SKEW-INT}.
\end{proof}

Let $S\in\mathbb{C}^{d\times{}d}$ be a unitary matrix such that $P_1=S^{\star}\Delta{}S$ holds and $\Delta=\diag(\Lambda,-\Theta)$ where $\Lambda=\diag(\lambda_1,\dots,\lambda_{n_+})\in\mathbb{C}^{n_+\times n_+}$ and $\Theta=\diag(\theta_1,\dots,\theta_{n_-})\in\mathbb{C}^{n_-\times n_-}$ with $\lambda_k$, $\theta_j>0$ for $k\in\{1,\dots,n_+\}$, $j\in\{1,\dots,n_-\}$. We define
$$
\mathcal{G}_1=\bigoplus_{e\in E_{\ell}} \mathbb{C}^{n_+} \oplus \bigoplus_{e\in E_{r}} \mathbb{C}^{n_-} \;\text{ and }\; \mathcal{G}_2=\bigoplus_{e\in E_{\ell}} \mathbb{C}^{n_-} \oplus \bigoplus_{e\in E_{r}} \mathbb{C}^{n_+}
$$
in the sense of a Hilbert space direct sum where each $\mathbb{C}^{n_+}$ carries the scalar product $\langle{}\cdot,\cdot\rangle_{\Lambda}:=\langle{}\cdot,\Lambda{}\,\cdot\rangle{}$ and each $\mathbb{C}^{n_-}$ carries the scalar product $\langle{}\cdot,\cdot\rangle_{\Theta}:=\langle{}\cdot,\Theta{}\,\cdot\rangle{}$. Moreover, we read $\mathbb{C}^0=\{0\}$. In other words we have
\begin{equation}\label{EQ-GG}
\mathcal{G}_1=\ell^2(E_{\ell},\lambda)\oplus\ell^2(E_{r},\theta) \;\text{ and }\; \mathcal{G}_2=\ell^2(E_{\ell},\theta)\oplus\ell^2(E_{r},\lambda)
\end{equation}
with
$$
\ell^2(E_{\ell},\lambda):=\bigl\{(x_i)_{i\in N}:=(\underbrace{x_1,x_2,\dots,x_{n_+},x_1,x_2,\dots,x_{n_+},\dots}_{|E_{\ell}|-\text{times}}) \:;\:\|(x_i)_{i\in N}\|_{\ell^2(E_{\ell},\lambda)}^2=\sum_{i\in N}\lambda_i|x_i|^2<\infty\bigr\}
$$
where 
$$
\lambda=(\lambda_i)_{i\in N}=(\underbrace{\lambda_1,\lambda_2,\dots,\lambda_{n_+},\lambda_1,\lambda_2,\dots,\lambda_{n_+},\dots}_{|E_{\ell}|-\text{times}})
$$
for $N=\{1,2,\dots,n_+\cdot|E_{\ell}|\}$ if $n_+\geqslant1$ and $1\leqslant|E_{\ell}|<\infty$. If $n_+\geqslant1$ and $|E_{\ell}|=\infty$ then $N=\mathbb{N}$ and if $n_+=0$ or $|E_{\ell}|=0$, we put $\ell^2(E_{\ell},\lambda):=\{0\}$. The spaces $\ell^2(E_{r},\theta)$, $\ell^2(E_{\ell},\theta)$ and $\ell^2(E_{r},\lambda)$ are defined analogously.

\smallskip

We denote by $\pr_+$ and $\pr_-$ the projections as defined in Section \ref{SEC:4}. Let $\Omega$ be the standard symmetric form and let $\omega$ be the standard unitary form on $\mathcal{G}_1\oplus\mathcal{G}_2$. Now we define
$$
F\colon \gr(A^{\star})\rightarrow\mathcal{G}_1\oplus\mathcal{G}_2,\; F(x,A^{\star}x):=\biggl(\begin{bmatrix}(\pr_+(S\mathcal{H}_ex_e)(a(e))_{e\in E_{\ell}}\\(\pr_-(S\mathcal{H}_ex_e)(b(e)))_{e\in E_r}\\\end{bmatrix},
\begin{bmatrix}(\pr_-(S\mathcal{H}_ex_e)(a(e)))_{e\in E_{\ell}}\\(\pr_+(S\mathcal{H}_ex_e)(b(e)))_{e\in E_r}\\\end{bmatrix}\biggr).
$$
Under the additional assumption $\inf_{e\in E} \diam(a(e),b(e))>0$ we get a boundary system. For this, we need a quantitative version of the Sobolev embedding theorem, see e.g.~\cite[Lemma 2.3]{LSV2014}. We particularly refer to Arendt, Chill, Seifert, Vogt, Voigt \cite[Theorem 4.9]{Arendt2015} for the method of the proof.

\begin{lem}\label{lem:SE} Let $\mathcal{H}\in \Ls^\infty(0,\mu)^{d\times d}$, $\mathcal{H}({\xi})^\star=\mathcal{H}(\xi)\geqslant c>0$ for a.e. $\xi\in (0,\mu) $, $\mu>0$ and some $c>0$. Assume that $x\in \Ls^2(0,\mu)$ and $(\mathcal{H}x)'\in  \Ls^2(0,\mu)$. Then for all $0<\alpha\leqslant \mu$ we obtain
\[
    \|\mathcal{H}x(0)\|^2\leqslant 2\biggl(\frac1{{\mu}}\cdot \|\mathcal{H}^{1/2}\|_{\Ls^\infty(0,\mu)}^2 +  \mu\cdot\|\mathcal{H}^{-1/2}\|_{\Ls^\infty(0,\mu)}^2\biggr)\left(\|x\|^2_{\Ls^2_{\mathcal{H}}(0,\mu)}+\|(\mathcal{H}x)'\|^2_{\Ls^2_{\mathcal{H}}(0,\mu)}\right)
\]
\end{lem}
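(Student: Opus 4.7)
The plan is to derive the estimate from the fundamental theorem of calculus applied to the (continuous representative of) $\mathcal{H}x$, together with an averaging argument over the interval $(0,\mu)$. First I would observe that the assumptions $x\in\Ls^2(0,\mu)$, $(\mathcal{H}x)'\in\Ls^2(0,\mu)$ together with $\mathcal{H}^{-1}\in\Ls^\infty(0,\mu)$ imply $\mathcal{H}x\in\operatorname{H}^1(0,\mu)$ so that, by Lemma \ref{lem:Sob}, $\mathcal{H}x$ has a continuous representative and the identity
\[
 \mathcal{H}x(0) = \mathcal{H}x(\xi) - \int_0^\xi (\mathcal{H}x)'(s)\,\dd s
\]
holds for every $\xi\in[0,\mu]$. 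Consequently, squaring and using $|a+b|^2\leqslant 2|a|^2+2|b|^2$ together with the Cauchy--Schwarz inequality for the integral term gives
\[
 |\mathcal{H}x(0)|^2 \leqslant 2|\mathcal{H}x(\xi)|^2 + 2\xi \int_0^\xi |(\mathcal{H}x)'(s)|^2\,\dd s
\]
for every $\xi\in(0,\mu)$.

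Next I would integrate this inequality with respect to $\xi$ over $(0,\mu)$ and divide by $\mu$. Using $\xi\leqslant \mu$ on the second term and Fubini (or the trivial bound $\int_0^\xi\leqslant\int_0^\mu$) yields
\[
 |\mathcal{H}x(0)|^2 \leqslant \frac{2}{\mu}\int_0^\mu |\mathcal{H}x(\xi)|^2\,\dd \xi + 2\mu \int_0^\mu |(\mathcal{H}x)'(s)|^2\,\dd s.
\]
The two integrals on the right-hand side are unweighted $\Ls^2$-norms, so the final step is to pass to the weighted $\Ls^2_\mathcal{H}$-norms by inserting $\mathcal{H}^{1/2}\mathcal{H}^{-1/2}=1$. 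For the first integral write $\mathcal{H}x=\mathcal{H}^{1/2}(\mathcal{H}^{1/2}x)$ and estimate pointwise by $\|\mathcal{H}^{1/2}\|_{\Ls^\infty}^2|\mathcal{H}^{1/2}x|^2$, which integrates to $\|\mathcal{H}^{1/2}\|_{\Ls^\infty}^2\|x\|_{\Ls^2_\mathcal{H}(0,\mu)}^2$; for the second, write $(\mathcal{H}x)'=\mathcal{H}^{-1/2}(\mathcal{H}^{1/2}(\mathcal{H}x)')$ and estimate analogously to obtain $\|\mathcal{H}^{-1/2}\|_{\Ls^\infty}^2\|(\mathcal{H}x)'\|_{\Ls^2_\mathcal{H}(0,\mu)}^2$.

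Combining these two bounds yields precisely
\[
 |\mathcal{H}x(0)|^2 \leqslant 2\Bigl(\tfrac{1}{\mu}\|\mathcal{H}^{1/2}\|_{\Ls^\infty(0,\mu)}^2+\mu\,\|\mathcal{H}^{-1/2}\|_{\Ls^\infty(0,\mu)}^2\Bigr)\bigl(\|x\|_{\Ls^2_\mathcal{H}(0,\mu)}^2+\|(\mathcal{H}x)'\|_{\Ls^2_\mathcal{H}(0,\mu)}^2\bigr),
\]
which is the claimed inequality. I do not expect any genuine difficulty here; the only thing to be careful about is that the continuous representative is needed to make sense of $\mathcal{H}x(0)$, and that the condition $\mathcal{H}\geqslant c>0$ (equivalently $\mathcal{H}^{-1}\in\Ls^\infty$) is precisely what guarantees both the Sobolev regularity and the finiteness of the constant $\|\mathcal{H}^{-1/2}\|_{\Ls^\infty}$. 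The parameter $\alpha$ appearing in the statement plays no active role in the argument; a completely analogous proof on the subinterval $(0,\alpha)$ would yield the sharper estimate with $\alpha$ in place of $\mu$ on the right-hand side, but since the weights are only assumed bounded on $(0,\mu)$ the stated form is exactly what the averaging over $(0,\mu)$ produces.
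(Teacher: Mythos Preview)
Your proof is correct and follows essentially the same approach as the paper: the fundamental theorem of calculus for $\mathcal{H}x$, an averaging step over $(0,\mu)$, Cauchy--Schwarz, and then conversion to weighted norms via $\mathcal{H}^{\pm 1/2}$. The only cosmetic difference is that the paper takes norms first and uses $\inf_{\xi}\|\mathcal{H}x(\xi)\|\leqslant\frac{1}{\mu}\int_0^\mu\|\mathcal{H}x(\xi)\|\,\dd\xi$ before squaring at the end, whereas you square first and then average; both routes give the same constant.
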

\begin{proof} We use that $\mathcal{H}x(\xi)=\mathcal{H}x(0)+\int_0^\xi (\mathcal{H}x)'(\xi)\dd \xi$ holds for all $\xi$ and compute
\begin{align*}
  \|\mathcal{H}x(0)\| & \leqslant    \inf_{\xi \in (0,\mu)} \|\mathcal{H}x(\xi)\| + \int_0^\mu \|(\mathcal{H}x)'(\xi)\| \dd \xi \\
  &   \leqslant \frac1\mu \int _0^\mu \|\mathcal{H}x(\xi)\| \dd \xi + \int_0^\mu \|(\mathcal{H}x)'(\xi)\| \dd \xi \\
  &    \leqslant \frac1{\sqrt{\mu}} \left(\int _0^\mu \|\mathcal{H}x(\xi)\|^2 \dd \xi\right)^{1/2} + \mu^{1/2} \left( \int_0^\mu \|(\mathcal{H}x)'(\xi)\|^2 \dd \xi \right)^{1/2}\\
  &   \leqslant   \frac1{\sqrt{\mu}} \|\mathcal{H}^{1/2}\|_{\Ls^\infty} \|x\|_{\Ls^2_\mathcal{H}} + \|\mathcal{H}^{-1/2}\|_{\Ls^\infty}\mu^{1/2} \|(\mathcal{H}x)'(\xi)\|_{\Ls^2_\mathcal{H}},
\end{align*}
which implies the desired inequality.
\end{proof}
\smallskip

\begin{lem}\label{BS-NETWORK-AXIS} If $\inf_{e\in E} \diam(a(e),b(e))>0$ and $\mathcal{H}$ is uniformly strictly positive and uniformly bounded, then the quadrupel $(\Omega, \mathcal{G}_1, \mathcal{G}_2, F, \omega)$ defined above is a boundary system for $A$.
\end{lem}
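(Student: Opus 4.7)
The task is to verify that $F$ is well-defined as a map into $\mathcal{G}_1\oplus\mathcal{G}_2$, that it is surjective, and that the form identity $\Omega((x,A^\star x),(y,A^\star y))=\omega(F(x,A^\star x),F(y,A^\star y))$ holds; linearity of $F$ is immediate. All three steps split cleanly edge-by-edge and largely rehearse the computation of Lemma \ref{BS-SEMI-AXIS}; what is new, and what the standing hypotheses $\inf_{e\in E}\diam(a(e),b(e))>0$ together with uniform boundedness and uniform positivity of $\mathcal{H}$ are for, is a uniform trace estimate, without which the boundary data of an element of $D(A^\star)$ would fail to be $\ell^2$-summable over the edges. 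This is the main obstacle.

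The heart of the argument is therefore the uniform trace estimate. Put $L\coloneqq\inf_{e\in E}\diam(a(e),b(e))>0$. For each $e\in E_\ell$ apply Lemma \ref{lem:SE} to $\mathcal{H}_e$ on the subinterval $(a(e),a(e)+L)\subseteq(a(e),b(e))$; since $\|\mathcal{H}_e\|_\infty\leqslant\|\mathcal{H}\|_\infty$ and $\|\mathcal{H}_e^{-1}\|_\infty\leqslant\|\mathcal{H}^{-1}\|_\infty$ are bounded uniformly in $e$, the resulting constant depends only on $L$, $\|\mathcal{H}\|_\infty$, and $\|\mathcal{H}^{-1}\|_\infty$, hence is uniform in $e$. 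This gives
\[
|(\mathcal{H}_e x_e)(a(e))|^2\leqslant C\bigl(\|x_e\|_{\Ls^2_{\mathcal{H}_e}(a(e),a(e)+L)}^2+\|(\mathcal{H}_e x_e)'\|_{\Ls^2_{\mathcal{H}_e}(a(e),a(e)+L)}^2\bigr),
\]
and the analogous estimate at $b(e)$ for $e\in E_r$ follows by applying Lemma \ref{lem:SE} on $(b(e)-L,b(e))$. Summing over $E_\ell$ and $E_r$, and using that $S$ is unitary while $\Lambda,\Theta$ are bounded, shows $F(x,A^\star x)\in\mathcal{G}_1\oplus\mathcal{G}_2$ with $\|F(x,A^\star x)\|^2\leqslant C'(\|x\|_X^2+\|A^\star x\|_X^2)$.

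With well-definedness in hand, the form identity follows by performing the integration-by-parts calculation of Lemma \ref{BS-SEMI-AXIS} on each edge separately. For edges with an unbounded endpoint, Lemma \ref{LEM-0}(b) (which needs boundedness of $\mathcal{H}$) ensures that $\mathcal{H}_e x_e$ vanishes there, killing the corresponding boundary term. The edgewise contribution thus equals $(S\mathcal{H}_e x_e)(a(e))^\star\Delta(S\mathcal{H}_e y_e)(a(e))-(S\mathcal{H}_e x_e)(b(e))^\star\Delta(S\mathcal{H}_e y_e)(b(e))$ (with the appropriate term omitted whenever $a(e)=-\infty$ or $b(e)=+\infty$). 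Summing over $e$---an absolutely convergent manipulation by the trace estimate above---and expanding $\Delta=\diag(\Lambda,-\Theta)$ reproduces exactly the four sums that make up $\omega(F(x,A^\star x),F(y,A^\star y))$.

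For surjectivity, given $(\xi,\eta)\in\mathcal{G}_1\oplus\mathcal{G}_2$ I would read off on each edge vectors $v_e^-,v_e^+\in\mathbb{C}^d$ whose $\pr_+$- and $\pr_-$-components encode the prescribed boundary data at $a(e)$ and $b(e)$, respectively, and whose remaining components are zero. Fix a Lipschitz function $\phi\colon[0,L]\to[0,1]$ with $\phi(0)=1$ and $\phi\equiv 0$ on $[L/2,L]$, and set
\[
y_e(\zeta)\coloneqq \phi(\zeta-a(e))\,S^\star v_e^-\,\chi_{[a(e),a(e)+L/2]}(\zeta)+\phi(b(e)-\zeta)\,S^\star v_e^+\,\chi_{[b(e)-L/2,b(e)]}(\zeta),
\]
with the obvious omission of a term when the corresponding endpoint is infinite. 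Setting $x_e\coloneqq\mathcal{H}_e^{-1}y_e$ produces an element of $D(A^\star)$ edge-by-edge, and the uniform bounds on $\mathcal{H}_e^{-1}$, $\phi$ and $\phi'$ give $\|x_e\|_{\Ls^2_{\mathcal{H}_e}}^2+\|(\mathcal{H}_e x_e)'\|_{\Ls^2_{\mathcal{H}_e}}^2\leqslant C(|v_e^-|^2+|v_e^+|^2)$ with $C$ independent of $e$. Summing shows $x=(x_e)_{e\in E}\in D(A^\star)$, while $F(x,A^\star x)=(\xi,\eta)$ by construction. Again the uniformity across possibly infinitely many edges is the essential point and explains why the later Section \ref{sec:phsII} is necessary as soon as the present hypotheses on edge lengths and on $\mathcal{H}$ are relaxed.
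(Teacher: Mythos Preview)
Your proposal is correct and follows essentially the same approach as the paper's own proof: both establish well-definedness of $F$ via the uniform trace estimate of Lemma~\ref{lem:SE} (using the uniform lower bound on edge lengths and the uniform bounds on $\mathcal{H}$), verify the form identity by summing the edgewise integration-by-parts computation of Lemma~\ref{BS-SEMI-AXIS}, and handle surjectivity by constructing explicit preimages localized near the endpoints. Your treatment of surjectivity is more explicit than the paper's (which simply says ``constructing piecewise linear functions''), and you note the absolute convergence of the edgewise sum, which the paper leaves implicit.
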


\begin{proof} We firstly observe that in \eqref{EQ-GG} all weighted $\ell^2$-spaces are isomorphic to unweighted $\ell^2$-spaces since the weight sequences only attain finitely many values. Using $\mu:=\inf_{e\in E} \diam(a(e),b(e))>0$ we can therefore derive from Lemma \ref{lem:SE} that the entries of $F(x,A^{\star}x)$ belong to $\ell^2$. Indeed, $x\in D(A^{\star})$ means $\mathcal{H}_ex_e|_{(a(e),a(e)+\mu)}\in\operatorname{H}^1(a(e),a(e)+\mu)$ for $e\in E_{\ell}$ and $\mathcal{H}_ex_e|_{(b(e)-\mu,b(e))}\in\operatorname{H}^1(b(e)-\mu,b(e))$ for $e\in E_{r}$. In view of the above, this implies that $F\colon \gr(A^{\star})\rightarrow\mathcal{G}_1\oplus\mathcal{G}_2$ is well-defined. Now we compute
\begin{align*}
\phantom{xxxxxx}&\hspace{-40pt}\Omega\bigl((x,A^{\star}x),(y,A^{\star}y)\bigr) \\
& =  \langle{}x,A^{\star}y\rangle_{\Ls^2_{\mathcal{H}}(\Gamma)} + \langle{}A^{\star}x,y\rangle_{\Ls^2_{\mathcal{H}}(\Gamma)}\\
& =  -\langle{}(x_e)_e,(P_1(\mathcal{H}_ey_e)'+P_0\mathcal{H}_ey_e)_e\rangle_{\Ls^2_{\mathcal{H}}(\Gamma)} -  \langle{}(P_1(\mathcal{H}_ex_e)'+P_0\mathcal{H}_ex_e)_e,(y_e)_e\rangle_{\Ls^2_{\mathcal{H}}(\Gamma)}\\
& =  -\langle{}(x_e)_e,P_1(\mathcal{H}_ey_e)'_e\rangle_{\Ls^2_{\mathcal{H}}(\Gamma)} - \langle{}P_1(\mathcal{H}_ex_e)')_e,(y_e)_e\rangle_{\Ls^2_{\mathcal{H}}(\Gamma)}\\
& = \sum_{e\in E}\bigg[-\int_{a(e)}^{b(e)}x_e(\xi)^{\star}\mathcal{H}_e(\xi)P_1(\mathcal{H}_ey_e)'(\xi)\operatorname{d}\!\xi - \int_{a(e)}^{b(e)}(P_1(\mathcal{H}_ex_e)'(\xi))^{\star}\mathcal{H}_e(\xi)y_e(\xi)\operatorname{d}\!\xi\bigg]
\\
& \stackrel{(\circ)}{=} \sum_{e\in E_{\ell}\backslash E_{r}}(\mathcal{H}_ex_e(a(e))^{\star}P_1(\mathcal{H}_ey_e(a(e)) + \sum_{e\in E_{r}\backslash E_{\ell}}-(\mathcal{H}_ex_e(b(e))^{\star}P_1(\mathcal{H}_ey_e(b(e))\\
& \phantom{++} + \sum_{e\in E_{\ell}\cap E_r}(\mathcal{H}_ex_e(a(e))^{\star}P_1(\mathcal{H}_ey_e(a(e))-(\mathcal{H}_ex_e(b(e))^{\star}P_1(\mathcal{H}_ey_e(b(e))\\
& = \sum_{e\in E_{\ell}}(S\mathcal{H}_ex_e(a(e))^{\star}\Delta(S\mathcal{H}_ey_e(a(e)) - \sum_{e\in E_{r}}(S\mathcal{H}_ex_e(b(e))^{\star}\Delta(S\mathcal{H}_ey_e(b(e))\\
& = \sum_{e\in E_{\ell}} \begin{bmatrix}\pr_+(S\mathcal{H}_ex_e)(a(e))\\\pr_-(S\mathcal{H}_ex_e)(a(e))\\\end{bmatrix}^{\star} \begin{bmatrix} \Lambda & 0  \\0 & -\Theta  \\\end{bmatrix} \begin{bmatrix}\pr_+(S\mathcal{H}_ey_e)(a(e))\\\pr_-(S\mathcal{H}_ey_e)(a(e))\\\end{bmatrix}\\
& \phantom{++} - \sum_{e\in E_{r}}\begin{bmatrix}\pr_+(S\mathcal{H}_ex_e)(b(e))\\\pr_-(S\mathcal{H}_ex_e)(b(e))\\\end{bmatrix}^{\star} \begin{bmatrix} \Lambda & 0  \\0 & -\Theta  \\\end{bmatrix} \begin{bmatrix}\pr_+(S\mathcal{H}_ey_e)(b(e))\\\pr_-(S\mathcal{H}_ey_e)(b(e))\\\end{bmatrix}\\
& = \sum_{e\in E_{\ell}}\langle{}\pr_+(S\mathcal{H}_ex_e)(a(e)),\pr_+(S\mathcal{H}_ey_e)(a(e))\rangle{}_{\Lambda}\\
& \phantom{++} + \sum_{e\in E_{\ell}}-\langle{}\pr_-(S\mathcal{H}_ex_e)(a(e)),\pr_-(S\mathcal{H}_ey_e)(a(e))\rangle{}_{\Theta}\\
& \phantom{++} -\sum_{e\in E_{r}}\langle{}\pr_+(S\mathcal{H}_ex_e)(b(e)),\pr_+(S\mathcal{H}_ey_e)(b(e))\rangle{}_{\Lambda}\\
& \phantom{++} -\sum_{e\in E_{r}}-\langle{}\pr_-(S\mathcal{H}_ex_e)(b(e)),\pr_-(S\mathcal{H}_ey_e)(b(e))\rangle{}_{\Theta}\\
& =  \bigg\langle{}\begin{bmatrix}(\pr_+(S\mathcal{H}_ex_e)(a(e)))_{e\in E_{\ell}}\\(\pr_-(S\mathcal{H}_ex_e)(b(e)))_{e\in E_{r}}\\\end{bmatrix}, \begin{bmatrix}(\pr_+(S\mathcal{H}_ey_e)(a(e)))_{e\in E_{\ell}}\\(\pr_-(S\mathcal{H}_ey_e)(b(e)))_{e\in E_{r}}\\\end{bmatrix}\bigg\rangle{}_{\mathcal{G}_1}\\
&\phantom{++}-\bigg\langle{}\begin{bmatrix}(\pr_-(S\mathcal{H}_ex_e)(a(e)))_{e\in E_{\ell}}\\(\pr_+(S\mathcal{H}_ex_e)(b(e)))_{e\in E_{r}}\\\end{bmatrix}, \begin{bmatrix}(\pr_-(S\mathcal{H}_ey_e)(a(e)))_{e\in E_{\ell}}\\(\pr_+(S\mathcal{H}_ey_e)(b(e)))_{e\in E_{r}}\\\end{bmatrix}\bigg\rangle{}_{\mathcal{G}_2}\\
&=\omega\bigl(F(x,A^{\star}x),F(y,A^{\star}y)\bigr)
\end{align*}
where we used integration by parts in $(\circ)$. For the right semi-axis' associated with $e\in E_{\ell}\backslash E_{r}$ we can proceed as in the proof of Lemma \ref{BS-SEMI-AXIS}. The left semi-axis' associated with $e\in E_{r}\backslash E_{\ell}$ we treat analogously and  the finite intervals for $e\in E_{\ell}\cap E_{r}$ are straightforward. It is easy to see that $F$ is surjective by constructing piecewise linear functions.
\end{proof}

As in the semi-axis case treated in Section \ref{SEC:4} we get the classification of generators of contraction semigroups via Theorem \ref{CLASS-THM}.

\smallskip

\begin{thm}\label{super-thm} Let $\Gamma$, $\mathcal{H}$, $P_1$, $P_0$, $S$, $n_+$ and $n_-$, $\mathcal{G}_1$ and $\mathcal{G}_2$ be as above and assume $\mathcal{H}$ to be uniformly bounded and uniformly strictly positive. Assume that we have $\inf_{e\in E} \diam(a(e),b(e))>0$. The operator $A\colon D(A)\subseteq \Ls^2_{\mathcal{H}}(\Gamma)\rightarrow\Ls^2_{\mathcal{H}}(\Gamma)$, $Ax=(P_1(\mathcal{H}_ex_e)'+P_0\mathcal{H}_ex_e)_{e\in E}$ for $x=(x_e)_{e\in E}$, generates a $\Cnull$-semigroup of contractions if and only if there is a contraction $T\colon\mathcal{G}_2\rightarrow\mathcal{G}_1$ such that
$$
D(A)=\biggl\{x\in \Ls^2_{\mathcal{H}}(\Gamma)\:;\: (\mathcal{H}x)'\in\Ls^2_{\mathcal{H}}(\Gamma)\text{ and } T\begin{bmatrix}(\pr_-(S\mathcal{H}_ex_e)(a(e)))_{e\in E_{\ell}}\\(\pr_+(S\mathcal{H}_ex_e)(b(e)))_{e\in E_{r}}\\\end{bmatrix}=\begin{bmatrix}(\pr_+(S\mathcal{H}_ex_e)(a(e)))_{e\in E_{\ell}}\\(\pr_-(S\mathcal{H}_ex_e)(b(e)))_{e\in E_{r}}\\\end{bmatrix}\biggr\}
$$
holds.\hfill\qed
\end{thm}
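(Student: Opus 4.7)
The plan is to combine Theorem~\ref{CLASS-THM}, the boundary system constructed in Lemma~\ref{BS-NETWORK-AXIS}, and the Lumer--Phillips theorem. By the latter, the operator $A$ in the statement generates a $\Cnull$-semigroup of contractions if and only if it is densely defined and maximal dissipative. Now let $A_0$ denote the minimal operator from Theorem~\ref{SKEW-NET} (i.e.~\eqref{eq:net} with vanishing boundary data); since $A_0$ is skew-symmetric with $-A_0^\star$ given by the formula in Theorem~\ref{SKEW-NET}, any extension of $A_0$ with action $(P_1(\mathcal{H}_e x_e)'+P_0\mathcal{H}_e x_e)_{e\in E}$ is automatically a restriction of $-A_0^\star$, so the problem reduces to characterizing those $A$ with $A_0 \subseteq A \subseteq -A_0^\star$ that are maximal dissipative.

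First I would invoke Lemma~\ref{BS-NETWORK-AXIS} to produce the boundary system $(\Omega,\mathcal{G}_1,\mathcal{G}_2,F,\omega)$ for $A_0$; the hypothesis $\inf_{e\in E}\diam(a(e),b(e))>0$ is essential here since, via the uniform Sobolev trace bound in Lemma~\ref{lem:SE}, it guarantees that the point-evaluations assembled into $F$ really do land in $\mathcal{G}_1 \oplus \mathcal{G}_2$. Having this boundary system, Theorem~\ref{CLASS-THM} provides, abstractly, a bijection between the maximal dissipative extensions under consideration and the contractions $T\colon \mathcal{G}_2 \to \mathcal{G}_1$, with the corresponding domain prescribed by
\[
D(A) = \{x \in D(A_0^\star) \:;\: F_1(x) = T F_2(x)\}.
\]
This is exactly the network analogue of the argument used for the semi-axis in Theorem~\ref{GEN-SEMI-AXIS}.

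The concluding step is purely bookkeeping: I would substitute the explicit coordinate formulas for $F_1$ and $F_2$ given immediately before the theorem---namely the columns of $\pr_+(S\mathcal{H}_e x_e)(a(e))$ and $\pr_-(S\mathcal{H}_e x_e)(b(e))$ on the one hand, and of $\pr_-(S\mathcal{H}_e x_e)(a(e))$ and $\pr_+(S\mathcal{H}_e x_e)(b(e))$ on the other---into the abstract boundary relation $F_1(x) = T F_2(x)$ to obtain the stated matrix equation describing $D(A)$. I do not expect a genuinely new obstacle in this step: the two substantial ingredients, namely the construction of the boundary system (which absorbs the uniform edge-length assumption and uses the integration by parts identity) and the abstract parametrization of maximal dissipative extensions by contractions, have already been established in Lemma~\ref{BS-NETWORK-AXIS} and Theorem~\ref{CLASS-THM}, and the remaining work is notational translation together with one invocation of Lumer--Phillips.
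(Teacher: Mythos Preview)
Your proposal is correct and follows precisely the approach indicated in the paper: the theorem is stated with a \qed and no separate proof, the surrounding text simply noting that the classification follows from Theorem~\ref{CLASS-THM} via the boundary system of Lemma~\ref{BS-NETWORK-AXIS}, exactly as in the semi-axis case (Theorem~\ref{GEN-SEMI-AXIS}). You have spelled out this argument in full---reduction to maximal dissipativity via Lumer--Phillips, invocation of the boundary system (with Lemma~\ref{lem:SE} ensuring well-definedness of $F$), and then the translation of the abstract condition $F_1(x)=TF_2(x)$ into explicit trace data---which is just what the paper leaves implicit.
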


We point out that the above characterization looks technical, but that in addition to the given data only the matrix $S$ and the numbers $\lambda_k$, $\theta_j$ have to be computed by diagonalizing the matrix $P_1$. In view of \eqref{EQ-GG} testing the contraction property then boils down to estimations in weighted sequence spaces.

\medskip

%%%%%%%%%%%%%%%%%%%%%%%%%%%%%%%%%%%%%%%%%%%%%%%%%%%%%%%
%%%%%%%%%%%%%%%%%%%%%%%%%%%%%%%%%%%%%%%%%%%%%%%%%%%%%%%
%%                                                   %%
%% 7. Port-Hamiltonian operators in networks II -- Arbitrary edge lengths                        %%
%%                                                   %%
%%%%%%%%%%%%%%%%%%%%%%%%%%%%%%%%%%%%%%%%%%%%%%%%%%%%%%%
%%%%%%%%%%%%%%%%%%%%%%%%%%%%%%%%%%%%%%%%%%%%%%%%%%%%%%%

\section{Port-Hamiltonian operators on networks II}\label{sec:phsII}

The results of Section \ref{SEC:positive-edge} allow to treat the port-Hamiltonian operator on a wide range of networks. Comparing to the previous results we were able to relax several boundedness conditions. In view of the network we here catch up with results of Lenz, Schubert, Veseli\'c \cite{LSV2014} who treat the Laplacian on almost arbitrary networks: The only restriction needed in \cite{LSV2014} is the same that we need in Theorem \ref{super-thm}, namely that the edge lenghts are bounded away from zero.

\smallskip

In this section we discuss the case of arbitrary edge lengths. Moreover, we will revisit the case of unbounded $\mathcal{H}$. In the first of the following three subsections we will have a look at a set of assumptions, that will enable us to apply the theory developed earlier directly. The second subsection is concerned with a classification theorem usable for arbitrary edge lengths and rather general $\mathcal{H}$ as well as $P_0\neq 0$. We conclude the present section with a model case for $P_0=0$ and $\mathcal{H}=1$ but arbitrary edge lengths. It will turn out that we can define a model network with uniform edge lengths for a given network with arbitrary edge lengths. This strategy might be viewed as a `regularization' method similar to the ideas developed by Gernandt, Trunk \cite{Gernandt2018} and the references therein. Here, however, we extend the theory to systems of first derivative operators and detour the theory of boundary triples.

\smallskip

%%%%%%%%%%%%%%%%%%%%%%%%%%%%%%%%%%%%%%%%%%%%%%%%%%%%%%%
%%%%%%%%%%%%%%%%%%%%%%%%%%%%%%%%%%%%%%%%%%%%%%%%%%%%%%%
%%                                                   %%
%% 7.1 Arbitrary Edge lenghts                        %%
%%                                                   %%
%%%%%%%%%%%%%%%%%%%%%%%%%%%%%%%%%%%%%%%%%%%%%%%%%%%%%%%
%%%%%%%%%%%%%%%%%%%%%%%%%%%%%%%%%%%%%%%%%%%%%%%%%%%%%%%

\subsection{A divide-and-conquer approach for arbitrary edge lenghts}\label{SEC:case-by-case}

For the first treatment of networks with edges being arbitrarily small, we begin with a collection of some abstract result. As before let $(X_n)_{n\in\mathbb{N}}$ be a sequence of closed, increasing subspaces of $X$ with $(P_n)_{n\in\mathbb{N}}$ being the corresponding orthogonal projections. Furthermore, assume that $\bigcup_{n\in\mathbb{N}} X_n$ is dense in $X$ and put $Q_n:=1-P_n$.

\begin{ass}\label{ass:2} Let $H\colon D(H)\subseteq X\to X$ be densely defined and linear. Below we consider the following three assumptions.\vspace{3pt}
  \begin{compactitem}
  \item[(A1)] $H^\star P_n$ is densely defined for every $n\in \mathbb{N}$. \vspace{2pt}
  \item[(A2)] For every $n\in \mathbb{N}$, $x\in D(H)$ and $\varepsilon>0$ there is $z\in Q_n[D(H)]$ such that
$$
\|z\| \leqslant \varepsilon,\;P_n x+ z \in D(H) \text{ and } P_n(H(P_n x+z)-Hx) =0
$$
\item[(A3)] For every $n\in\mathbb{N}$, $x\in D(H)$, $\varepsilon>0$ there is $w \in Q_n[D(H)]$ such that
$$
P_nx+w\in D(H) \text{ and } \|Q_n((1-H)(P_nx+w))\|\leqslant \varepsilon
$$
\end{compactitem}
\end{ass}

\begin{lem}\label{lem:wdd} Assume (A1) and (A2) to be in effect. Then 
\[
H_n \colon P_n[D(H)]\subseteq X_n \to X_n
\]
given by $H_n P_n x \coloneqq P_n H x$ for all $x\in D(H)$ is well-defined. Moreover, $H_n$ is densely defined.
\end{lem}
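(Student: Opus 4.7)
The plan is to verify both assertions directly from (A1) and (A2). For dense definedness, since $D(H_n) = P_n[D(H)]$, $P_n \in L(X)$ and $D(H)$ is dense in $X$, every $y \in X_n$ is the limit of $P_n x_k$ with $x_k \in D(H)$ (by continuity of $P_n$ together with $P_n y = y$); this part is immediate. The main content is well-definedness, which by linearity reduces to showing that if $x \in D(H)$ satisfies $P_n x = 0$, then $P_n H x = 0$.

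To establish this, I would apply (A2) for each $k \in \mathbb{N}$ with $\varepsilon = 1/k$ to produce $z_k \in Q_n[D(H)]$ with $\|z_k\| \leqslant 1/k$ and $P_n x + z_k \in D(H)$. Under the hypothesis $P_n x = 0$ this forces $z_k \in D(H) \cap X_n^{\perp}$, and the second part of (A2) reduces to the identity $P_n H z_k = P_n H x$. For any $w \in D(H^{\star}) \cap X_n$ one then computes
\[
    \langle P_n H x, w\rangle_X = \langle P_n H z_k, w \rangle_X = \langle H z_k, P_n w \rangle_X = \langle z_k, H^{\star} w\rangle_X \xrightarrow[k\to\infty]{} 0,
\]
so $\langle P_n H x, w\rangle_X = 0$ for every such $w$. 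Since $P_n H x \in X_n$, this forces $P_n H x = 0$ provided $D(H^{\star}) \cap X_n$ is dense in $X_n$.

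The mild technical point, which I regard as the only real hurdle, is extracting this density from (A1). For this I would observe the decomposition $D(H^{\star} P_n) = (D(H^{\star}) \cap X_n) \oplus X_n^{\perp}$, which holds because $u \in D(H^{\star} P_n)$ is equivalent to $P_n u \in D(H^{\star}) \cap X_n$, while $Q_n u \in X_n^{\perp}$ is unconstrained. Since $X_n^{\perp}$ is closed, $D(H^{\star} P_n)$ is dense in $X$ if and only if $D(H^{\star}) \cap X_n$ is dense in $X_n$, which is precisely what the previous step demanded. Combining these, the well-definedness of $H_n$ follows, and together with the dense range of $P_n$ on $D(H)$ this completes the proof.
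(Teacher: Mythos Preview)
Your proof is correct and follows the same overall strategy as the paper: use (A2) with $P_n x=0$ to produce $z_k\to 0$ in $D(H)$ with $P_nHz_k=P_nHx$, and then use (A1) to force $P_nHx=0$; the density argument for $D(H_n)$ is identical. The only difference is cosmetic: where you unpack (A1) via the decomposition $D(H^\star P_n)=(D(H^\star)\cap X_n)\oplus X_n^\perp$ and an explicit orthogonality computation, the paper simply observes that (A1) makes $P_nH$ closable (since $(P_nH)^\star\supseteq H^\star P_n$ is densely defined) and concludes $P_nHx=0$ directly from $z_k\to 0$ with $P_nHz_k$ constant.
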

\begin{proof} Let $x\in D(H)$ with $P_nx=0$. By (A2) we find $z_k\in Q_n[D(H)]$ such that $z_k \in D(H)$, $\|z_k\|\leqslant 1/k$ and 
  \[
      P_n H  x=  P_n H  z_k.
  \]
  Since $H^\star P_n$ is densely defined, it follows that $P_nH$ is closable. Thus, as $z_k \to 0$ and $(P_nHz_k)_{k\in\mathbb{N}}$ is convergent to $P_nHx$, it follows that $P_nH x=0$, which yields the first assertion of the lemma. Since $P_n$ is continuous and $H$ is densely defined it follows
  \[
      X_n =  P_n [X]=P_n[\overline{D(H)}]\subseteq \overline{P_n[D(H)]}\subseteq X_n,
  \]
  which implies that $H_n$ is densely defined.
\end{proof}

\begin{thm}\label{thm:lfw2} Assume that (A1)--(A3) hold. \vspace{3pt}
\begin{compactitem}
 \item[(i)] If the operator $H_n$ from Lemma \ref{lem:wdd} is dissipative for every $n\in \mathbb{N}$, then $H$ is dissipative.\vspace{2pt}
 \item[(ii)] We have $\overline{\ran(1-H)}\supseteq \ran(1-H_n)$ for every $n\in \mathbb{N}$.\vspace{2pt}
 \item[(iii)] If $H_n$ is dissipative and $\ran(1-H_n)\subseteq X_n$ is dense for every $n\in\mathbb{N}$, then $\overline{H}$ is maximal dissipative.
\end{compactitem}
\end{thm}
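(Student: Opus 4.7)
For (i), I would fix $x\in D(H)$ and $n\in\mathbb{N}$. By Lemma \ref{lem:wdd} the operator $H_n$ is well-defined, and in particular $P_n x \in D(H_n)$ with $H_n P_n x = P_n H x$. Since $X_n\subseteq X$ is equipped with the restricted inner product and $H_n$ is dissipative on $X_n$ by assumption, this yields
\[
    \Re\langle P_n x, P_n Hx\rangle_X = \Re\langle P_n x, H_n P_n x\rangle_{X_n} \leqslant 0.
\]
Letting $n\to\infty$ and using that $P_n \to \id_X$ in the strong operator topology (due to the density of $\bigcup_n X_n$ in $X$) then gives $\Re\langle x, Hx\rangle\leqslant 0$, so $H$ is dissipative.

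For (ii), let $y \in \ran(1-H_n)$. Then $y = P_n x - H_n P_n x = P_n x - P_n Hx$ for some $x\in D(H)$. Given $\varepsilon>0$, assumption (A3) provides $w\in Q_n[D(H)]$ such that $u\coloneqq P_n x + w \in D(H)$ and $\|Q_n((1-H)u)\|\leqslant \varepsilon$. Since $Q_n$ is the complementary orthogonal projection, $P_n w = 0$ and hence $P_n u = P_n x$. Applying the well-definedness of $H_n$ to both $x$ and $u$ yields
\[
    P_n H u = H_n P_n u = H_n P_n x = P_n Hx,
\]
so $P_n(1-H)u = P_n x - P_n Hx = y$. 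Consequently $(1-H)u - y = Q_n((1-H)u)$ has norm at most $\varepsilon$, proving $y\in\overline{\ran(1-H)}$.

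For (iii), (i) implies that $H$ is dissipative. By (ii) together with the density of $\ran(1-H_n)$ in $X_n$, we obtain $\overline{\ran(1-H)} \supseteq X_n$ for every $n\in\mathbb{N}$. Since $\bigcup_{n\in\mathbb{N}} X_n$ is dense in $X$, it follows that $\ran(1-H)$ is dense in $X$. An appeal to Lemma \ref{lem:ele} then yields that $\overline{H}$ is maximal dissipative.

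The main technical hurdle lies in the computation in (ii): one needs to guarantee that the perturbation $w$ furnished by (A3) does not alter $P_n H u$. This forces a careful use of Lemma \ref{lem:wdd}, which in turn is where assumptions (A1) and (A2) enter the picture through the closability of $P_n H$. Once this bookkeeping is in place, the remaining arguments reduce to strong convergence of $(P_n)_{n\in\mathbb{N}}$ and the already established Lemma \ref{lem:ele}.
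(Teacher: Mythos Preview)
Your proof is correct and follows essentially the same route as the paper: part (i) uses strong convergence of $P_n$ together with $H_nP_nx=P_nHx$, part (ii) invokes (A3) and then the well-definedness statement of Lemma \ref{lem:wdd} to ensure $P_n H u = P_n H x$, and part (iii) combines (i), (ii), and Lemma \ref{lem:ele}. Your identification of the key technical point---that the perturbation $w$ must not disturb $P_nHu$, which is exactly what Lemma \ref{lem:wdd} (and hence (A1), (A2)) guarantees---matches the paper's reasoning precisely.
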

\begin{proof} (i) Let $x\in D(H)$ and observe that
\[
0\geqslant \Re \langle P_n x,H_n x\rangle =\Re \langle P_n x,P_nH x\rangle=\Re \langle P_n x,H x\rangle\mathop{\longrightarrow}^{n\to\infty} \Re \langle x,H x\rangle
\]
holds. This yields that $H$ is dissipative.

\smallskip

(ii) Let $y\in \ran(1-H_n)$. We find $x\in D(H)$ such that
\[
   y = P_n x - P_n Hx = P_n(1-H)x
\] 
holds. For every $k\in \mathbb{N}$ there is $w_k\in Q_n[D(H)]$ with $P_n x+ w_k \in D(H)$ and
\[
\|Q_n((1-H)(P_nx+w_k))\|\leqslant 1/k
\]
by (A3). Next, we show that for all $k\in\mathbb{N}$ we have
\[
P_n(1-H)x = P_n (1-H)(P_nx+w_k).
\]
For this, note that $P_n x = P_n(P_nx+w_k)$ and therefore $P_n H x = P_n H(P_n x+w_k)$ by Lemma~\ref{lem:wdd}. Thus,
  \begin{align*}
    P_n (1-H)(P_nx + w_k) & =     P_n (P_nx + w_k) - P_n H(P_nx + w_k) \\
      & = P_nx -P_n Hx\\
      & = P_n(1-H)x.
  \end{align*}
  So,
  \begin{align*}
    (1-H)(P_n x+w_k) & = P_n (1-H)(P_nx+w_k) + Q_n (1-H)(P_nx+w_k) \\
    & =  P_n(1-H)x+ Q_n (1-H)(P_nx+w_k)\phantom{\mathop{\longrightarrow}^{k\to\infty}}\\
    & = y + Q_n (1-H)(P_nx+w_k) \mathop{\longrightarrow}^{k\to\infty} y.
  \end{align*}

\smallskip

(iii) We observe that $H$ is dissipative by (i). Since $H$ is also densely defined, we infer that $H$ is closable. Moreover, it follows by (ii) that $\ran(1-H)$ is dense in $X$. Hence, $\overline{H}$ is maximal dissipative by Lemma \ref{lem:ele}.  
\end{proof}

We may now apply Theorem \ref{thm:lfw2} to port-Hamiltonian operators as discussed in the previous section. The main theorem in this section provides a sufficient condition for extensions being maximal dissipative. It can be viewed as a `localization' of Theorem \ref{super-thm}. Consequently, we have to assume all the conditions stated in Theorem \ref{super-thm}. We will however dispose of the condition that $\mathcal{H}$ is being both uniformly strictly positive and uniformly bounded as well as the uniform positive lower bound for the edge lengths.

\smallskip

\begin{thm}\label{super-thm2} Let $\Gamma$, $\mathcal{H}$, $P_1$, $P_0$, $S$, $n_+$ and $n_-$ be as above. The operator $A\colon D(A)\subseteq \Ls^2_{\mathcal{H}}(\Gamma)\rightarrow\Ls^2_{\mathcal{H}}(\Gamma)$, $Ax=(P_1(\mathcal{H}_ex_e)'+P_0\mathcal{H}_ex_e)_{e\in E}$ for $x=(x_e)_{e\in E}$, generates a $\Cnull$-semigroup of contractions if the following conditions hold.

\begin{enumerate}

\item[(i)] There exists a sequence $(E_n)_{n\in\mathbb{N}}$ with $E_n \subseteq E$, $E_n\subseteq E_{n+1}$ for all $n\in\mathbb{N}$ and $E=\bigcup_{n\in\mathbb{N}} E_n$ such that $\mathcal{H}_n \coloneqq \mathcal{H}|_{\mathcal{E}_n}$ is bounded and uniformly strictly positive, where $\mathcal{E}_n \coloneqq \bigcupdot_{e\in E_n} (a(e),b(e))$.\vspace{3pt}

\item[(ii)] We have $\inf_{e\in E_n} (b(e)-a(e))>0$ for all $n\in\mathbb{N}$. \vspace{3pt}

\item[(iii)] For every $n\in \mathbb{N}$ there exists $F_n \subseteq E\setminus E_n$ finite such that given $x\in D(A)$ we find $\tilde{y}\in  \bigoplus_{e\in E_n\cup F_n} \Ls^2_{\mathcal{H}_e}(a(e),b(e))$ such that $\tilde{y}_e=x_e$ for all $e\in E_n$ and  the vector
\[
y\coloneqq \begin{cases} \;\tilde{y}_e& \text{ if } e\in E_n \cup F_n, \\ \;\hspace{1.5pt}0 & \text{ if } e \in E\setminus (E_n\cup F_n) \end{cases}
\]
belongs to $D(A)$.\vspace{3pt}

\item[(iv)] For every $n\in\mathbb{N}$ the operator $A_n \colon D(A_n)\subseteq \bigoplus_{e\in E_n} \Ls^2_{\mathcal{H}_e}(a(e),b(e))\to  \bigoplus_{e\in E_n} \Ls^2_{\mathcal{H}_e}(a(e),b(e))$ given by $A_n x^{(n)} \coloneqq (P_1(\mathcal{H}_ex_e)'+P_0\mathcal{H}_ex_e)_{e\in E_n}$ for $x\in D(A)$ such that $x^{(n)}=x$ on $E_n$ satisfies
 \begin{align*}
    D(A_n)& = \biggl\{ x \in  \bigoplus_{e\in E_n} \Ls^2_{\mathcal{H}_e}(a(e),b(e))\:; \:\exists\: \tilde{x}\in D(A)\; \forall\: e\in E_n \colon x_e=\tilde{x}_e \biggr\} \\
& = \biggl\{ x \in  \bigoplus_{e\in E_n} \Ls^2_{\mathcal{H}_e}(a(e),b(e))\:;\: (\mathcal{H}x)'\in\ \bigoplus_{e\in E_n} \Ls^2_{\mathcal{H}_e}(a(e),b(e)) \\
& \quad\quad\quad\quad\text{ and } \: T_n\begin{bmatrix}(\pr_-(S\mathcal{H}_ex_e)(a(e)))_{e\in E_{n,\ell}}\\(\pr_+(S\mathcal{H}_ex_e)(b(e)))_{e\in E_{n,r}}\\\end{bmatrix}=\begin{bmatrix}(\pr_+(S\mathcal{H}_ex_e)(a(e)))_{e\in E_{n,\ell}}\\(\pr_-(S\mathcal{H}_ex_e)(b(e)))_{e\in E_{n,r}}\\\end{bmatrix} \biggr\}
\end{align*}
for some contraction $T_n \colon \mathcal{G}_{n,2}\rightarrow\mathcal{G}_{n,1}$ where $E_{n,\ell}= E_n \cap E_{\ell}$, $E_{n,r}= E_n \cap E_{r}$. Here, $\mathcal{G}_{n,1}$ and $\mathcal{G}_{n,2}$ are defined as $\mathcal{G}_2,\mathcal{G}_1$ in \eqref{EQ-GG} with $E_{n,\ell}$, $E_{n,r}$ replacing $E_{\ell}$, $E_r$, respectively.

\vspace{3pt}

 \item[(v)] For every $n\in \mathbb{N}$, $x\in D(A)$ and $\varepsilon>0$ we find ${y}\in D(A)$ such that ${y}_e=x_e$ for all $e\in E_n$ and  \[\|(1-A)y\|_{\oplus_{e\in E\setminus E_n}L^2_{\mathcal{H}_e}(a(e),b(e))}\leqslant \varepsilon.\]
\end{enumerate}
\end{thm}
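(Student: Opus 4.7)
The plan is to apply Theorem \ref{thm:lfw2} to $A$ and then invoke the Lumer--Phillips theorem. I would set $X \coloneqq \Ls^2_{\mathcal{H}}(\Gamma)$ and for each $n\in\mathbb{N}$ embed $X_n \coloneqq \bigoplus_{e\in E_n}\Ls^2_{\mathcal{H}_e}(a(e),b(e))$ as a closed subspace of $X$ by extension by zero outside $\mathcal{E}_n$. The orthogonal projection $P_n$ onto $X_n$ is then pointwise restriction to $\mathcal{E}_n$, I put $Q_n \coloneqq 1-P_n$, and since $E = \bigcup_n E_n$, the union $\bigcup_n X_n$ is dense in $X$.

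First I would use conditions (i), (ii) together with the domain description of $A_n$ in (iv) to fall under the hypotheses of Theorem \ref{super-thm} on $\mathcal{E}_n$: this directly yields that $A_n$ generates a $\Cnull$-semigroup of contractions on $X_n$, and in particular $A_n$ is dissipative and $\ran(1-A_n)=X_n$. Next I would verify Assumption \ref{ass:2} for $H\coloneqq A$. Item (A1) follows from Theorem \ref{SKEW-NET}, since $\gr(A^{\star})$ contains all elements whose components are compactly supported $\operatorname{C}^{\infty}$ functions on individual edges, and the $P_n$-images of these still form a dense subset of $X_n$. For (A2), given $x\in D(A)$ I would apply (iii) to produce $y\in D(A)$ with $y|_{E_n}=x|_{E_n}$ and $\supp y \subseteq E_n\cup F_n$; setting $z \coloneqq y-P_n x$ gives $z\in Q_n[D(A)]$ with $P_n x+z=y\in D(A)$, and the edge-locality of $A$ forces $(Ay)|_{E_n}=(Ax)|_{E_n}$, which is precisely $P_n(A(P_n x+z)-Ax)=0$. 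The required smallness $\|z\|\leqslant\varepsilon$ would be arranged by truncating $y$ on each of the finitely many edges in $F_n$ near the interface between $E_n$ and $F_n$, using that the boundary-matching there is prescribed by the local contraction $T_n$ from (iv). Part (A3) I would read off directly from (v) by setting $w\coloneqq y-P_n x$ with $y$ from (v); then $w\in Q_n[D(A)]$, $P_n x+w=y\in D(A)$ and $\|Q_n((1-A)y)\|\leqslant\varepsilon$.

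With Assumption \ref{ass:2} in place and the dissipativity plus density of $\ran(1-A_n)$ in $X_n$ from the first step, Theorem \ref{thm:lfw2}(iii) yields that $\overline{A}$ is maximal dissipative in $X$. Closedness of $A$ is inherited from the distributional-derivative definition of each summand and is not destroyed by the bounded perturbation $P_0\mathcal{H}$, so $A=\overline{A}$, and density of $D(A)$ follows from the density of $D(A_n)$ in each $X_n$ combined with (iii). The Lumer--Phillips theorem, see Phillips \cite{Phillips1959}, then provides the desired $\Cnull$-semigroup of contractions. The main technical hurdle is securing the smallness of $\|z\|$ in (A2): condition (iii) only furnishes \emph{some} extension on $E_n\cup F_n$ without a priori norm control, and it is the finiteness of $F_n$ together with the local-contraction encoding of the boundary conditions at the interface that should allow cutting off this extension to an arbitrarily small piece while staying inside $D(A)$.
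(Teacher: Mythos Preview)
Your overall strategy matches the paper's proof exactly: apply Theorem \ref{thm:lfw2} with $H=A$ and $P_n$ the restriction to $E_n$, verify (A1)--(A3), and feed in the maximal dissipativity of each $A_n$ obtained from Theorem \ref{super-thm}. Your arguments for (A1) and (A3) are essentially the paper's.

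The one genuine gap is in your treatment of the smallness $\|z\|\leqslant\varepsilon$ in (A2). Your proposal is to ``truncate $y$ on each edge of $F_n$ near the interface'' and to justify membership in $D(A)$ via the contraction $T_n$ from (iv). But $T_n$ encodes only the boundary conditions that $D(A)$ induces on $E_n$ after projection; it says nothing about how $D(A)$ couples the endpoints of $F_n$-edges to one another or to $E\setminus(E_n\cup F_n)$. Condition (iii) exists precisely because such coupling may force nonzero values on $F_n$, and a truncation that respects all these constraints while achieving an arbitrarily small $\Ls^2$-norm on $F_n$ is not available in general.

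The paper's fix is much simpler and avoids boundary conditions altogether. Given $y$ from (iii), choose on each of the finitely many edges $e\in F_n$ a function $\tilde z_e\in\operatorname{C}_c^\infty(a(e),b(e))$ with $\|\tilde y_e-\tilde z_e\|_{\Ls^2_{\mathcal{H}_e}}\leqslant\varepsilon/(|F_n|+1)$, and set $z_e\coloneqq\tilde y_e-\tilde z_e$ for $e\in F_n$ and $z_e\coloneqq 0$ for $e\in E\setminus F_n$. Then $P_nx+z=y-\zeta$, where $\zeta$ is the extension of $(\tilde z_e)_{e\in F_n}$ by zero; since $\zeta$ is $\operatorname{C}_c^\infty$ on each edge it lies in the minimal domain \eqref{eq:net} and hence in $D(A)$, so $P_nx+z\in D(A)$. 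The smallness $\|z\|\leqslant\varepsilon$ is immediate, and $P_n(A(P_nx+z)-Ax)=0$ follows from edge-locality exactly as you noted. You correctly identified this step as the main technical hurdle; the point is that it is resolved by $\operatorname{C}_c^\infty$ density, not by manipulating boundary data.

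A minor point: your closedness argument for $A$ via ``bounded perturbation $P_0\mathcal{H}$'' is not valid here, since $\mathcal{H}$ is only assumed bounded on each $\mathcal{E}_n$, not globally. The paper does not dwell on this either; Theorem \ref{thm:lfw2}(iii) delivers maximal dissipativity of $\overline{A}$, and the conclusion should be read as $\overline{A}$ generating the contraction semigroup.
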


\begin{proof} We  apply Theorem \ref{thm:lfw2} to $H=A$. The sequence of projections $(P_n)_{n\in \mathbb{N}}$ are the restriction operators $\tilde{P}_n x \coloneqq (x_e)_{e\in E_n} \in \bigoplus_{e\in E_n} \Ls^2_{\mathcal{H}_e}(a(e),b(e))\eqqcolon X_n$ for $x\in\Ls^2_\mathcal{H}(\Gamma)$. Since $D(A^\star)\supseteq \bigoplus_{e\in E} \operatorname{C}_c^\infty(a(e),b(e))$, we infer that $A^{\star}\tilde{P}_n$ is densely defined and hence (A1) holds in Assumption \ref{ass:2}. Next, we show (A2). For this let $n\in\mathbb{N}$, $x\in D(A)$ and $\varepsilon>0$. We choose $F_n\subseteq (E\setminus E_n)$ finite and $\tilde{y}$ according to the assumptions in this theorem. Since $F_n$ is finite and $\bigoplus_{e\in E}\operatorname{C}_c^\infty(a(e),b(e))$ belongs to the domain in \eqref{eq:net}, we find
\[
\tilde{z}\in\bigoplus_{e\in F_n} \operatorname{C}_c^\infty(a(e),b(e))
\]
such that $\|\tilde{y}_e-\tilde{z}_e\|_{\Ls^2_{\mathcal{H}_e}}\leqslant \varepsilon/(|F_n|+1)$. Then
\[
z_e\coloneqq \begin{cases} \;\tilde{y}_e - \tilde{z}_e&\text{ if } e\in F_n ,\\ \;\hspace{12pt}0 & \text{ if } e\in  E\setminus F_n\end{cases}
\]
has the desired properties. In particular, we obtain by Lemma \ref{lem:wdd} that $A_n$ is well-defined. Moreover, we can apply Theorem \ref{super-thm} to $A_n$ and deduce that $A_n$ is maximal dissipative. In order to conclude that $A$ is maximal dissipative by Theorem \ref{thm:lfw2}, it thus remains to show that condition (A3) in Assumption \ref{ass:2} is satisfied. This however follows easily from the last condition that we assumed above.
\end{proof}

\begin{rem} We shall see in Section \ref{SEC:Examples} that all the above conditions are easy to verify in practice except for the last one. The last condition really depends on the ctopology of the graph and the corresponding boundary conditions.  A more systematic approach will be presented in the next subsection.
\end{rem}

\medskip

\subsection{The canonical boundary system}\label{SEC:arbitrary-edge}

The theorem above treats networks without uniformly positive edge lenghts by applying the methods of Section \ref{SEC:positive-edge}, i.e., for uniformly positive edge lenghts, locally. This requires that the graph can be divided into pieces where the results of Section \ref{SEC:positive-edge} are applicable---and that the results for each piece can later be put together. To cover this, we needed the assumptions stated in Theorem \ref{super-thm2}.

\smallskip

In this subsection we want to outline a completely different approach to networks with arbitrary edge lenghts. Firstly, we recall the reason why Theorem \ref{super-thm} cannot be applied without assuming that the edge lenghts are uniformly positive. Indeed, without that it is impossible even to write down the boundary system used in the latter theorem, since then the map $F$ then would not be well-defined. On the other hand the article \cite{BTBS} provides a canonical boundary system \textit{for every skew-symmetric operator $A$}. Since we are able to compute $A^{\star}$ without the aforementioned restriction on the graph, \cite[Theorem 3.1]{BTBS} provides a boundary system and \cite[Theorem A]{BTBS} then describes all contraction semigroups in a similar way as we did it above. For convenience of the reader we recall {\cite[Theorem 3.1]{BTBS}}.  For the direct decomposition used in this result, we refer to  \cite[Lemma 2.5]{BT}.
 
\smallskip

\begin{thm}\label{thm:canBS} Let $H_0$ be skew-symmetric. Let $\mathcal{G}_1:= \ker(1-H_0^\star)$, $\mathcal{G}_2:= \ker(1+H_0^\star)$ and let $P_j\colon D(H_0^\star)\to \mathcal{G}_j$ be the projection for $j\in\{1,2\}$ according to the direct decomposition $D(H_0^\star)=D(H_0)\dot+\mathcal{G}_1\dot+\mathcal{G}_2$. Let $\Omega$ be the standard symmetric form, $\omega$ be the standard unitary form and let $F\colon \gr(H_0^\star)\to \mathcal{G}_1\oplus \mathcal{G}_2$ be defined by
$$
F(x,H_0^{\star})=({\sqrt{2}}P_1 x,{\sqrt{2}}P_2x).
$$
Then $(\Omega, \mathcal{G}_1,\mathcal{G}_2,F,\omega)$ is a boundary systems for $H_0$.\hfill\qed 
\end{thm}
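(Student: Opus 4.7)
The plan is to verify the three defining properties of a boundary system in order: linearity and well-definedness of $F$, surjectivity of $F$, and the key identity $\Omega((x,H_0^\star x),(y,H_0^\star y))=\omega(F(x,H_0^\star x),F(y,H_0^\star y))$. The backbone is the direct-sum decomposition $D(H_0^\star)=D(H_0)\dot+\mathcal{G}_1\dot+\mathcal{G}_2$ cited from \cite[Lemma 2.5]{BT}: every $x\in D(H_0^\star)$ admits a unique representation $x=x_0+x_1+x_2$ with $x_0\in D(H_0)$ and $x_j\in \mathcal{G}_j$. From this the projections $P_1,P_2$ and hence $F$ are linear and well-defined; and since $\mathcal{G}_1,\mathcal{G}_2\subseteq D(H_0^\star)$, surjectivity is immediate: given $(u,v)\in \mathcal{G}_1\oplus\mathcal{G}_2$ the element $x:=(u+v)/\sqrt{2}$ lies in $D(H_0^\star)$ with decomposition $x_0=0$, $x_1=u/\sqrt{2}$, $x_2=v/\sqrt{2}$, so $F(x,H_0^\star x)=(u,v)$.

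The core of the argument is the form identity. First I would make $H_0^\star x$ explicit in terms of the decomposition: since $H_0$ is skew-symmetric, $H_0\subseteq -H_0^\star$ gives $H_0^\star x_0=-H_0x_0$, and by definition of $\mathcal{G}_j$ we have $H_0^\star x_1=x_1$ and $H_0^\star x_2=-x_2$. Hence
\[
H_0^\star x=-H_0x_0+x_1-x_2
\]
and analogously for $y$. I would then substitute these expressions into
\[
\Omega((x,H_0^\star x),(y,H_0^\star y))=\langle x,H_0^\star y\rangle+\langle H_0^\star x,y\rangle,
\]
expand the resulting eighteen scalar products along the three components of $x$ and $y$, and rewrite the mixed terms that still contain $H_0x_0$ or $H_0y_0$ via $\langle u,H_0v\rangle=\langle H_0^\star u,v\rangle$ (valid for $u\in D(H_0^\star)$, $v\in D(H_0)$), using the eigenvalue relations $H_0^\star x_j=(-1)^{j+1}x_j$ to turn them into scalar products between the decomposition components themselves.

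After the dust settles, the pure $D(H_0)$-contribution $-\langle x_0,H_0y_0\rangle-\langle H_0x_0,y_0\rangle$ vanishes by skew-symmetry; each pair of mixed terms between $D(H_0)$ and $\mathcal{G}_j$ appears with opposite sign and cancels; the two terms mixing $\mathcal{G}_1$ with $\mathcal{G}_2$ cancel as well. What survives is $2\langle x_1,y_1\rangle-2\langle x_2,y_2\rangle$. Since $\mathcal{G}_1$ and $\mathcal{G}_2$ carry the scalar product inherited from $X$, this equals
\[
\langle\sqrt{2}x_1,\sqrt{2}y_1\rangle_{\mathcal{G}_1}-\langle\sqrt{2}x_2,\sqrt{2}y_2\rangle_{\mathcal{G}_2}=\omega(F(x,H_0^\star x),F(y,H_0^\star y)),
\]
and the factor $\sqrt{2}$ in the definition of $F$ is precisely what is needed to absorb the $2$ coming from the form expansion.

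The hard part is neither the abstract structure nor the cancellations individually, but the careful tracking of the signs across all eighteen summands---in particular, keeping $H_0^\star x_2=-x_2$ and $H_0^\star x_0=-H_0x_0$ consistent with the adjoint pairing throughout. Once this arithmetic is carried out systematically, all three properties of a boundary system are verified and no further ingredient is required.
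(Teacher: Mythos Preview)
Your argument is correct: the direct-sum decomposition from \cite[Lemma 2.5]{BT} immediately yields linearity and surjectivity of $F$, and your expansion of $\Omega((x,H_0^\star x),(y,H_0^\star y))$ in terms of the components $x_0,x_1,x_2$ and $y_0,y_1,y_2$ goes through exactly as you describe, with all mixed terms cancelling and only $2\langle x_1,y_1\rangle-2\langle x_2,y_2\rangle$ surviving.

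Note, however, that the present paper does not actually prove this theorem. It is stated as a recollection of \cite[Theorem 3.1]{BTBS} and is closed with a \qed\ immediately after the statement; the reader is referred to that earlier article for the argument. So there is no proof in the paper to compare against. What you have written is essentially the standard verification one would expect to find in \cite{BTBS}, and it is complete as stated.
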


\smallskip

Applying the above to port-Hamiltonian systems we get the following.
\smallskip

\begin{thm}\label{super-thm-2} Let $\Gamma$, $\mathcal{H}$, $P_1$, $P_0$ be as at the beginning of Section \ref{SEC:positive-edge}. Let  $\mathcal{G}_1\coloneqq \ker(1-A^{\star})$ and $\mathcal{G}_2\coloneqq \ker(1+A^{\star})$. The operator $H\colon D(H)\subseteq \Ls^2_{\mathcal{H}}(\Gamma)\rightarrow\Ls^2_{\mathcal{H}}(\Gamma)$, $Hx=(P_1(\mathcal{H}_ex_e)'+P_0\mathcal{H}_ex_e)_{e\in E}$ for $x=(x_e)_{e\in E}$, generates a $\Cnull$-semigroup of contractions if and only if there is a contraction $T\colon\mathcal{G}_2\rightarrow\mathcal{G}_1$ such that
$$
D(H)=\bigl\{x\in D(A^\star)\:;\: (\mathcal{H}x)'\in\Ls^2_{\mathcal{H}}(\Gamma)\text{ and } TF_2x=F_1x\bigr\}
$$
holds. Here, $F=(F_1,F_2)$ is given as in Theorem \ref{thm:canBS} with $A$ in place of $H_0$. The adjoint $A^{\star}$ is explicitly given in Theorem \ref{SKEW-NET}.\hfill\qed
\end{thm}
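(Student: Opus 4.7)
The proof combines three ingredients already at hand: the explicit description of $A^\star$ from Theorem \ref{SKEW-NET}, the canonical boundary system construction of Theorem \ref{thm:canBS}, and the abstract classification Theorem \ref{CLASS-THM} together with the Lumer--Phillips theorem (Phillips \cite{Phillips1959}).

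My plan is first to feed the skew-symmetric operator $A$ from Theorem \ref{SKEW-NET} into Theorem \ref{thm:canBS}. Invoking the direct decomposition $D(A^\star) = D(A) \dot+ \mathcal{G}_1 \dot+ \mathcal{G}_2$ of \cite[Lemma 2.5]{BT}, with $\mathcal{G}_1 = \ker(1-A^\star)$ and $\mathcal{G}_2 = \ker(1+A^\star)$, and using the associated projections $P_1, P_2$ to build $F = (F_1,F_2)$, yields the canonical boundary system $(\Omega, \mathcal{G}_1, \mathcal{G}_2, F, \omega)$ with $\Omega$ and $\omega$ the standard symmetric and unitary forms. This step uses only that $A$ is densely defined, closed and skew-symmetric and is therefore insensitive to both the edge lengths and the size of $\mathcal{H}$---exactly the hypotheses that had to be imposed in Section \ref{SEC:positive-edge}.

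Next I would apply Theorem \ref{CLASS-THM} to this boundary system. Combined with the formula $-A^\star x = (P_1(\mathcal{H}_e x_e)' + P_0 \mathcal{H}_e x_e)_{e \in E}$ supplied by Theorem \ref{SKEW-NET}, this identifies the operators $H$ with $D(H) \subseteq D(A^\star)$ and the stated action as precisely the maximal dissipative extensions parametrised by the domain condition $F_1 x = T F_2 x$ for some contraction $T : \mathcal{G}_2 \to \mathcal{G}_1$. The Lumer--Phillips theorem then translates maximal dissipativity into generation of a $\Cnull$-semigroup of contractions, which closes the argument.

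There is no real technical obstacle in the argument itself: the difficulty has all been packed into the earlier theorems, in particular the computation of $A^\star$ under minimal hypotheses and the canonical boundary system machinery of \cite{BTBS}. The only point demanding a modicum of care is the sign bookkeeping that identifies the formula for $H$ with $-A^\star$ on the maximal domain. The advantage of this abstract route over the concrete boundary system of Section \ref{SEC:positive-edge} is precisely that the boundary data live in the kernels $\ker(1 \pm A^\star)$, which are well-defined without any trace regularity assumption, so no bounded point evaluation is needed and consequently arbitrarily small edges and severely degenerate $\mathcal{H}$ cause no trouble.
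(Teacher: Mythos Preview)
Your proposal is correct and matches the paper's own approach: the theorem is stated with a bare \qed because it is simply the combination of Theorem~\ref{SKEW-NET} (computing $A^\star$), Theorem~\ref{thm:canBS} (the canonical boundary system), and Theorem~\ref{CLASS-THM} (the classification of maximal dissipative extensions), together with Lumer--Phillips. You have identified precisely these ingredients and the order in which they are applied.
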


\smallskip

\subsection{A model network approach}\label{SEC:model-network}

The description of all maximal dissipative extensions in the previous subsection might not be explicit enough for applications. For this reason, we shall study a particular case in greater detail here. We focus on the case $P_0=0$ and $P_1 = P_1^\star\in \mathbb{C}^{d\times d}$ invertible and $\mathcal{H}=1$. We will thus concentrate on networks with infinitely many edges without strictly positive lower bound.  For the final characterization of maximal dissipative extensions, we need some prerequisites. For $\alpha>0$ we use the abbreviations
\[
    \Ls^2_{1/\alpha}(0,1)=\bigl(\Ls^2(0,1), \| \sqrt{1/\alpha} \cdot \|_{\Ls^2(0,1)}\bigr),\quad \operatorname{H}^1_{1/\alpha}(0,1) = \{ f\in \Ls^2_{1/\alpha}(0,1); f'\in \Ls^2_{1/\alpha}(0,1)\}.
\]

\begin{prop}\label{prop:geunge} \begin{compactitem}\item[(i)] Let $\alpha,\lambda>0$ and $[\partial \lambda]_\alpha \colon \operatorname{H}^1_{1/\alpha}(0,1)\subseteq \Ls^2_{1/\alpha}(0,1)\to \Ls^2_{1/\alpha}(0,1)$  be defined by
\[
    [\partial \lambda]_\alpha f = (\lambda f)'.
\]
Then the mapping
\begin{align*}
   S_{\pm,\lambda/\alpha} \colon \ker(1\pm [\partial \textstyle{\frac\lambda\alpha}]_\alpha) & \to  \ker(1\pm [\partial \lambda]_1), \\
\phi & \mapsto \textstyle{ \frac{c^\pm_{\lambda/\alpha}}{\alpha^{1/2}c^\pm_{\lambda}} }\phi(\textstyle{\frac1\alpha \cdot})
\end{align*}
is unitary, where $c^{\pm}_{\mu}\coloneqq \left( \pm\frac\mu2\left(e^{\mp 2/\mu}-1\right)\right)^{1/2}$, $\mu>0$.\vspace{3pt}

\item[(ii)] Let $\alpha>0$, $P_1 = S^\star\Delta S$ as above. Define $[\partial P_1]_\alpha \colon \operatorname{H}^1_{1/\alpha}(0,1)\subseteq \Ls^2_{1/\alpha}(0,1)\to \Ls^2_{1/\alpha}(0,1)$  be defined by
\[
[\partial P_1]_\alpha f = (P_1 f)'.
\]
Then the mapping
\begin{align*}
   S_{\pm,P_1/\alpha} \colon \ker(1\pm [\partial P_1\textstyle{\frac1\alpha}]_\alpha) & \to  \ker(1\pm [\partial P_1]_1), \\
\phi & \mapsto S^\star\diag\left(S_{\pm,\lambda_1/\alpha},\ldots,S_{\pm,\lambda_{n_+}/\alpha}, S_{\pm,-\theta_1/\alpha},\ldots S_{\pm,-\theta_{n_-}/\alpha}\right)Sf
\end{align*}
is unitary.
\end{compactitem}
\end{prop}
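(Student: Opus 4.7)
The plan is to first treat the scalar case in part (i) by explicit solution of the defining ODE, and then reduce the matrix case (ii) to the scalar one via the diagonalization of $P_1$. For part (i), I would begin by identifying both kernels explicitly. The operator $(1 \pm [\partial \nu]_\alpha)f = f \pm \nu f'$ has constant coefficients on $\operatorname{H}^1_{1/\alpha}(0,1)$ with no boundary conditions imposed, so solving the first-order ODE $f \pm \nu f' = 0$ yields the one-dimensional kernel $\operatorname{span}\bigl(\e^{\mp\,\cdot/\nu}\bigr)$. Specialising to $\nu = \lambda/\alpha$ on the domain side and $\nu = \lambda$ on the codomain side identifies both spaces in the statement as one-dimensional, so the linear map $S_{\pm,\lambda/\alpha}$ is unitary as soon as (a) it sends the generator of the left-hand kernel into the right-hand one and (b) preserves the norm of that generator.

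For (a), the dilation $x \mapsto x/\alpha$ sends the generator $\phi_\pm(x) = \e^{\mp \alpha x/\lambda}$ of $\ker(1 \pm [\partial \tfrac{\lambda}{\alpha}]_\alpha)$ to $\phi_\pm(x/\alpha) = \e^{\mp x/\lambda}$, which indeed spans $\ker(1 \pm [\partial \lambda]_1)$. For (b), the constancy of the weight gives $\|f\|_{\Ls^2_{1/\alpha}(0,1)}^2 = \alpha^{-1}\|f\|^2_{\Ls^2(0,1)}$ for every $f$; moreover, a direct integration yields $(c^\pm_\mu)^2 = \pm\tfrac{\mu}{2}(\e^{\mp 2/\mu}-1) = -\int_0^1 \e^{\mp 2x/\mu}\dd x$, so that the (possibly complex) phase of $c^\pm_\mu$ is independent of $\mu > 0$ and the ratio $c^\pm_{\lambda/\alpha}/c^\pm_\lambda$ reduces to the positive real ratio of $\Ls^2(0,1)$-norms of the corresponding generators. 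A brief calculation then shows that $c^\pm_{\lambda/\alpha}/(\alpha^{1/2}c^\pm_\lambda)$ is exactly the scalar needed so that $\|S_{\pm,\lambda/\alpha}\phi_\pm\|_{\Ls^2(0,1)} = \|\phi_\pm\|_{\Ls^2_{1/\alpha}(0,1)}$, which yields unitarity.

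For part (ii), the plan is to exploit the diagonalization $P_1 = S^\star \Delta S$. Since $S$ has constant entries, multiplication by $S$ commutes with the distributional derivative and acts as a unitary operator on both $\Ls^2_{1/\alpha}(0,1)^d$ and $\Ls^2(0,1)^d$. Hence $[\partial \tfrac{P_1}{\alpha}]_\alpha = S^\star [\partial \tfrac{\Delta}{\alpha}]_\alpha S$, and the diagonal structure of $\Delta$ decouples the middle operator into the scalar blocks $[\partial \tfrac{\lambda_i}{\alpha}]_\alpha$ and $[\partial \tfrac{-\theta_j}{\alpha}]_\alpha$. Taking kernels gives
\[
\ker\bigl(1 \pm [\partial \tfrac{P_1}{\alpha}]_\alpha\bigr) = S^\star \Bigl( \bigoplus_{i=1}^{n_+} \ker(1 \pm [\partial \tfrac{\lambda_i}{\alpha}]_\alpha) \oplus \bigoplus_{j=1}^{n_-} \ker(1 \pm [\partial \tfrac{-\theta_j}{\alpha}]_\alpha) \Bigr),
\]
and likewise on the target side. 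By construction, $S_{\pm,P_1/\alpha}$ is then the composition of $S$, the block-diagonal map whose blocks are the scalar unitaries from part (i), and $S^\star$; being a composition of three unitaries, it is itself unitary.

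The main point requiring a little care is that the scalar argument of part (i), although stated only for $\lambda > 0$, must also be invoked in part (ii) for the negative eigenvalues $-\theta_j$. The ODE solutions $\e^{\mp\,\cdot/\nu}$ and their square integrability on the bounded interval $(0,1)$ are unaffected by the sign of $\nu$, the formula for $c^\pm_\nu$ continues to define a (possibly purely imaginary) scalar whose phase matches that of $c^\pm_{\nu/\alpha}$, and consequently the prefactor in $S_{\pm,\nu/\alpha}$ remains real and positive, so the scalar unitarity argument applies verbatim.
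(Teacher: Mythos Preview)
Your argument is correct and follows the same elementary route the paper takes; the paper's own proof simply asserts that the map in (i) is ``easily seen to be well-defined and \ldots unitary'' and that (ii) is ``a composition of unitary operators by (i)'', without writing out any of the computations you supply. Your identification of the kernels via the ODE, the norm check using $(c^{\pm}_\mu)^2 = -\int_0^1 \e^{\mp 2x/\mu}\,\dd x$, and the reduction of (ii) to a block-diagonal composition of scalar unitaries conjugated by $S$ are exactly the details the paper suppresses. Your observation that part (i), although stated only for $\lambda>0$, must also be invoked for the negative eigenvalues $-\theta_j$ in (ii) is a genuine point the paper does not address; your remark that the sign of $\nu$ does not affect square-integrability on $(0,1)$ and that the common purely imaginary phase of $c^{\pm}_{\nu}$ and $c^{\pm}_{\nu/\alpha}$ cancels in the ratio (since $\alpha>0$ forces $\nu$ and $\nu/\alpha$ to share a sign) closes that gap cleanly.
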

\begin{proof} (i) The mapping is easily seen to be well-defined and it is elementary to see that it is unitary. 

\smallskip

(ii) The map under consideration is a composition of unitary operators by (i) and hence unitary itself.
\end{proof}

\begin{prop}\label{prop:smallunit} Let $-\infty<a<b<\infty $ and $\alpha\coloneqq b-a$.\vspace{2pt}
\begin{compactitem}
\item[(i)] The mapping
\begin{align*}
   S_{a,b} \colon L^2_{1/\alpha}(0,1) & \to L^2(a,b), \\
      g& \mapsto \frac1{\alpha}g(\frac{\cdot-a}{\alpha})
\end{align*}
is unitary.

\vspace{3pt}

\item[(ii)] We have
\[
    \partial_{0,(a,b)}P_1 = S_{a,b}^\star [\partial_0P_1\frac1\alpha]_\alpha S_{a,b},
\]
where $\partial_{0,(a,b)}$ has the vector-valued $\operatorname{H}^1$-functions that are zero in $a$ and $b$ as its domain. Similarly we define $\partial_0$ on the right-hand side.
\end{compactitem}
\end{prop}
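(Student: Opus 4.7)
The plan is a direct change-of-variables calculation; there is no substantial analytic obstacle. The constants in the definition of $S_{a,b}$ and in the weighted norm $\|\cdot\|_{L^2_{1/\alpha}(0,1)}$ are calibrated precisely so that the substitution $u=(\xi-a)/\alpha$ makes the statement into a routine verification once the Jacobian and weight factors are tracked.

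For part (i), I would start from the definition and apply the substitution $u=(\xi-a)/\alpha$:
\[
\|S_{a,b}g\|_{L^2(a,b)}^2 = \int_a^b \tfrac{1}{\alpha^2}\bigl|g\bigl(\tfrac{\xi-a}{\alpha}\bigr)\bigr|^2 d\xi = \tfrac{1}{\alpha}\int_0^1 |g(u)|^2 du = \|g\|_{L^2_{1/\alpha}(0,1)}^2,
\]
noting that the factor $\tfrac{1}{\alpha^2}$ from the definition of $S_{a,b}$, the Jacobian $\alpha$ from the substitution, and the weight $\tfrac{1}{\alpha}$ in the target norm combine correctly. Surjectivity then follows from the explicit inverse $f\mapsto \alpha f(a+\alpha\,\cdot)$, so $S_{a,b}$ is unitary.

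For part (ii), the plan is to verify that $S_{a,b}$ intertwines the two differential operators as a unitary equivalence. First, I would note that the substitution induces a bijection between the respective domains: $g\in \operatorname{H}^1_{1/\alpha}(0,1)$ with $g(0)=g(1)=0$ if and only if $f:=S_{a,b}g\in \operatorname{H}^1(a,b)$ with $f(a)=f(b)=0$, since the boundary conditions are preserved by the pointwise nonvanishing rescaling. Then by the chain rule applied to $f(\xi)=\tfrac{1}{\alpha}g((\xi-a)/\alpha)$,
\[
(P_1 f)'(\xi) = \tfrac{1}{\alpha^2}(P_1 g)'\bigl(\tfrac{\xi-a}{\alpha}\bigr) = S_{a,b}\bigl(\tfrac{1}{\alpha}(P_1g)'\bigr)(\xi) = \bigl(S_{a,b}[\partial_0 P_1 \tfrac{1}{\alpha}]_\alpha g\bigr)(\xi).
\]
Combining this with the unitarity from (i) and composing with $S_{a,b}^\star$ yields the claimed operator identity, with the matching of domains following from the bijection noted above.

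The only point really requiring attention is the bookkeeping of the weight $1/\alpha$ and the Jacobian factors; once these are laid out consistently, no further ingredients are needed, and the proof amounts essentially to the observation that all factors are designed to cancel.
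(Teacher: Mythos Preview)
Your argument is correct and follows essentially the same approach as the paper: the same substitution $u=(\xi-a)/\alpha$ for part (i) and the chain rule for part (ii). In fact, you supply more detail than the paper, which dispatches (ii) in a single sentence.
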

\begin{proof} (i) We compute for $g\in L^2_{1/\alpha}(0,1)$
\begin{align*}
    \|S_{a,b}g\|_{\Ls^2(a,b)}^2& = \int_a^b \frac1{\alpha^2} \|g(\frac{s-a}{\alpha}) \|^2 \dd s \\ &= \int_0^1 \frac1{\alpha} \|g(t) \|^2 \dd t \\ & = \|g\|^2_{\Ls^2_{1/\alpha}(0,1)}.
\end{align*}
(ii) The assertion is easy and follows from the chain rule.
\end{proof}

\begin{rem}
 In Proposition \ref{prop:smallunit}(ii) a similar equation also holds if we dispose of the Dirichlet boundary conditions on either side. In fact, by unitary equivalence the mentioned equation follows from computing the adjoint on either side of the equation in Proposition \ref{prop:smallunit}(ii).
\end{rem}

\smallskip

Equipped with these preliminaries, we are now in the position to apply the techniques to the characterization of all maximal dissipative extensions of first order systems on networks. We shall obtain a one-to-one correspondence: Theorem \ref{thm:arb_edg} below relates all maximal dissipative extensions of first derivative operators in networks without strictly positive lower bound for the edge lengths and all maximal dissipative extensions for first derivative operators on networks with the same cardinality but all edges having unit length. 

\smallskip

\begin{thm}\label{thm:arb_edg} Let $E$ be a countable set, $a\colon E\to \mathbb{R}$ and $b\colon E\to \mathbb{R}$ with $0< \alpha_e \coloneqq   b(e) - a(e)$ for all $e\in E$. Let 
$$
A \colon \bigoplus_{e\in E} \operatorname{H}_0^1(a(e),b(e))  \subseteq \bigoplus_{e\in E} \operatorname{L}^2(a(e),b(e)) \to \bigoplus_{e\in E} \operatorname{L}^2(a(e),b(e)), \;\; (\phi_e)_{e\in E} \mapsto ((P_1\phi_e)')_{e\in E}.
$$
Let $\tilde{A}$ be the operator $A$ for the case $a(e)=0=1-b(e)$ for all $e\in E$. Then a linear operator $H$ with  $A \subseteq H \subseteq -A^\star$ is maximal dissipative if and only if there is a contraction $T\colon \tilde{\mathcal{G}}_2 \to \tilde{\mathcal{G}}_1$ such that
\[
   D(H) = \bigl\{ x\in D(A^\star); T \mathbf{S}_{-,P_1/\alpha} {F}_2(\mathbf{S}_{a,b}^{-1}x) = \mathbf{S}_{+,P_1/\alpha} {F}_1(\mathbf{S}_{a,b}^{-1}x) \bigr\}
\] 
where $(\Omega,\mathcal{G}_1,\mathcal{G}_2,F,\omega)$ is the canonical boundary system given in Theorem \ref{super-thm-2} according to the choice $a(e)=1-b(e)=0$, $\mathcal{H}_e = 1/\alpha_e$,
\begin{align*}
 \tilde{ \mathcal{G}}_2 & = \ker(1 + \tilde{A}^\star) =  \bigoplus_{e\in E} \ker(1-[\partial P_1]) \subseteq \bigoplus_{e\in E} \operatorname{L}^2(0,1) \\ \tilde{\mathcal{G}}_1 & = \ker(1-\tilde{A}^\star) =\bigoplus_{e\in E} \ker(1+[\partial P_1]) \subseteq \bigoplus_{e\in E} \operatorname{L}^2(0,1)
\end{align*}
and
$$
\mathbf{S}_{a,b} \colon \bigoplus_{e\in E} \Ls^2_{1/\alpha_e}(0,1) \to  \bigoplus_{e\in E}\Ls^2(a(e),b(e)),\; (\mathbf{S}_{a,b}x)_e = S_{a_e,b_e}x_e,
$$
$$
\mathbf{S}_{\pm,P_1/\alpha_e} \colon \bigoplus_{e\in E} \ker(1\pm [\partial P_1/\alpha_e]_{\alpha_e}) \to  \bigoplus_{e\in E}  \ker(1\pm [\partial P_1]),\; x\mapsto   (S_{\pm,P_1/\alpha_e} x_e)_{e\in E}.
$$
\end{thm}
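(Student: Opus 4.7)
The plan is to reduce the claim to Theorem \ref{super-thm-2} via two unitary equivalences. The first, $\mathbf{S}_{a,b}$, converts the given network with arbitrary edge lengths into a network whose edges are all unit intervals, at the price of introducing the weights $\mathcal{H}_e = 1/\alpha_e$; the second, $\mathbf{S}_{\pm,P_1/\alpha}$, identifies the kernels appearing in the canonical boundary system of the resulting weighted unit network with the simpler kernels $\tilde{\mathcal{G}}_1$, $\tilde{\mathcal{G}}_2$ of the unweighted operator $\tilde{A}$.

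First, by Proposition \ref{prop:smallunit}(i) applied coordinate-wise, $\mathbf{S}_{a,b}$ is unitary, and Proposition \ref{prop:smallunit}(ii) shows that it conjugates the operator $A' \coloneqq \bigoplus_{e\in E}[\partial P_1/\alpha_e]_{\alpha_e}$ with Dirichlet boundary data on the weighted unit network $\bigoplus_{e\in E}\Ls^2_{1/\alpha_e}(0,1)$ into the operator $A$. Applying Theorem \ref{super-thm-2} to $A'$ yields that $H'$ satisfying $A' \subseteq H' \subseteq -(A')^\star$ is maximal dissipative if and only if there is a contraction $T'\colon \mathcal{G}_2 \to \mathcal{G}_1$ with $D(H') = \{y\in D((A')^\star)\:;\: T' F_2 y = F_1 y\}$, where $(\Omega,\mathcal{G}_1,\mathcal{G}_2,F,\omega)$ is the canonical boundary system from Theorem \ref{thm:canBS}. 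Using Theorem \ref{SKEW-NET} together with the block structure of $A'$ one computes
\[
\mathcal{G}_1 = \bigoplus_{e\in E}\ker(1+[\partial P_1/\alpha_e]_{\alpha_e}),\quad \mathcal{G}_2 = \bigoplus_{e\in E}\ker(1-[\partial P_1/\alpha_e]_{\alpha_e}).
\]
Since the relation $A \subseteq H \subseteq -A^\star$ and maximal dissipativity are preserved under unitary equivalence, the previous characterization transfers to $A$: the operator $H$ is maximal dissipative if and only if $D(H) = \{x\in D(A^\star)\:;\: T' F_2(\mathbf{S}_{a,b}^{-1}x) = F_1(\mathbf{S}_{a,b}^{-1}x)\}$ for some contraction $T'$.

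Second, Proposition \ref{prop:geunge}(ii) provides unitaries $S_{\pm,P_1/\alpha_e}\colon \ker(1 \pm [\partial P_1/\alpha_e]_{\alpha_e}) \to \ker(1 \pm [\partial P_1])$; summing over $E$ gives unitaries $\mathbf{S}_{+,P_1/\alpha}\colon \mathcal{G}_1 \to \tilde{\mathcal{G}}_1$ and $\mathbf{S}_{-,P_1/\alpha}\colon \mathcal{G}_2 \to \tilde{\mathcal{G}}_2$, where the identification of $\tilde{\mathcal{G}}_j$ with the stated kernel decomposition follows from the analogous computation of $\tilde{A}^\star$. Contractions $T'\colon \mathcal{G}_2 \to \mathcal{G}_1$ correspond bijectively to contractions $T\colon \tilde{\mathcal{G}}_2 \to \tilde{\mathcal{G}}_1$ via $T = \mathbf{S}_{+,P_1/\alpha}\, T'\, \mathbf{S}_{-,P_1/\alpha}^{-1}$. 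Substituting this into the equation $T' F_2 y = F_1 y$ and applying the isometry $\mathbf{S}_{+,P_1/\alpha}$ to both sides produces
\[
T\,\mathbf{S}_{-,P_1/\alpha}\, F_2(\mathbf{S}_{a,b}^{-1}x) \;=\; \mathbf{S}_{+,P_1/\alpha}\, F_1(\mathbf{S}_{a,b}^{-1}x),
\]
which is the claimed characterization.

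I expect the main obstacle to be bookkeeping rather than analysis: correctly matching signs in the two identifications $\ker(1 \pm \tilde{A}^\star) = \bigoplus_{e\in E}\ker(1 \mp [\partial P_1])$ and verifying that the reduction via $\mathbf{S}_{a,b}$ genuinely preserves the boundary-system structure, so that $F_j(\mathbf{S}_{a,b}^{-1}x)$ is the natural boundary datum associated with $x$. The essential inputs---Theorem \ref{super-thm-2} combined with the elementary scalings of Propositions \ref{prop:geunge} and \ref{prop:smallunit}---are all available, and no additional analytic ingredient beyond block-diagonal summation over $E$ is needed.
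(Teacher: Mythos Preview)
Your proposal is correct and follows essentially the same route as the paper: reduce to the weighted unit-interval network via the unitary $\mathbf{S}_{a,b}$ from Proposition~\ref{prop:smallunit}, invoke the classification Theorem~\ref{super-thm-2} with the canonical boundary system, and then pass to the unweighted spaces $\tilde{\mathcal{G}}_j$ via the kernel unitaries from Proposition~\ref{prop:geunge}, with Theorem~\ref{SKEW-NET} supplying the block-diagonal form of the adjoint. Your write-up is in fact more explicit than the paper's own proof, which merely points to these ingredients; the sign bookkeeping you flag is the only place requiring care, and you have it right.
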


\begin{proof} The claim follows from the considerations above relying on Proposition \ref{prop:smallunit} and Proposition \ref{prop:geunge} together with the classification result Theorem \ref{super-thm-2}. Note that for the computation of the adjoint $\tilde{A}^\star$ we used Theorem \ref{SKEW-NET}. The explicit computation of the kernels is elementary.
\end{proof}

\smallskip

The most important consequence of Theorem \ref{thm:arb_edg} is that now all the maximal dissipative extensions of $A$ can be characterized by the maximal dissipative extensions of $\tilde{A}$.

\smallskip

\begin{cor} \label{cor:arb_edg} Let $A$ and $\tilde{A}$ as in Theorem \ref{thm:arb_edg}. There is a one-to-one correspondence $\Phi$ of all maximal dissipative extensions $H$ of $A$ and $\tilde{H}$ of $\tilde{A}$. In particular, for every maximal dissipative extension $H$ of $A$, we find a unique contraction $\tilde{T}\colon \mathcal{G}_2 \to \mathcal{G}_1$, with $\mathcal{G}_i$ being given in Section \ref{SEC:positive-edge} for $a(e)=1-b(e)=0$ such that
\[
D(\Phi(H)) = \biggl\{ x\in D(\tilde{A}^\star)\:;\: \tilde{T} \begin{bmatrix}(\pr_-(Sx_e)(0))_{e\in E_{\ell}}\\(\pr_+(Sx_e)(1))_{e\in E_{r}}\\\end{bmatrix}=\begin{bmatrix}(\pr_+(Sx_e)(0))_{e\in E_{\ell}}\\(\pr_-(Sx_e)(1))_{e\in E_{r}}\\\end{bmatrix}\biggr\}.
\]
\end{cor}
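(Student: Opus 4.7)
The strategy is to combine the characterization in Theorem \ref{thm:arb_edg} with a second application of our theory to the model operator $\tilde{A}$. I would first observe that $\tilde{A}$ is itself a port-Hamiltonian operator on a network whose edges all have length one, with constant Hamiltonian $\mathcal{H}=1$ and $P_0=0$. In particular, the hypotheses of Theorem \ref{super-thm} are met for $\tilde{A}$ (uniformly positive edge lengths and uniformly bounded, uniformly strictly positive $\mathcal{H}$), so its maximal dissipative extensions $\tilde{H}$ are in one-to-one correspondence with contractions $\tilde{T}\colon \mathcal{G}_2 \to \mathcal{G}_1$, and the resulting domain is precisely the one written down in the statement.

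Next, I would apply Theorem \ref{super-thm-2} (via the canonical boundary system) to $\tilde{A}$ as well. This produces a second parametrization of the same set of maximal dissipative extensions of $\tilde{A}$, this time by contractions $T\colon \tilde{\mathcal{G}}_2\to \tilde{\mathcal{G}}_1$, where $\tilde{\mathcal{G}}_i = \ker(1\mp\tilde{A}^\star)$. Since both parametrizations are bijections onto the same collection of extensions of $\tilde{A}$, composing them yields a canonical one-to-one correspondence between the contractions $T\colon \tilde{\mathcal{G}}_2 \to \tilde{\mathcal{G}}_1$ and the contractions $\tilde{T}\colon \mathcal{G}_2 \to \mathcal{G}_1$. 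Finally, Theorem \ref{thm:arb_edg} matches each maximal dissipative extension $H$ of $A$ with a unique contraction $T\colon \tilde{\mathcal{G}}_2\to \tilde{\mathcal{G}}_1$, and I would define $\Phi(H)$ to be the maximal dissipative extension of $\tilde{A}$ associated with the corresponding contraction $\tilde{T}$ via Theorem \ref{super-thm}. Since every arrow in the composition is a bijection, so is $\Phi$, and the form of $D(\Phi(H))$ is exactly the one asserted.

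The main point to be careful about is that the bijection $T \leftrightarrow \tilde{T}$ obtained by routing through both boundary system characterizations of $\tilde{A}$ is genuinely well-defined. This is, however, a soft argument: by Theorem \ref{CLASS-THM} any maximal dissipative extension of a skew-symmetric operator is determined uniquely by the defining contraction of \emph{any} boundary system it admits, so the passage from $T$ to $\tilde{T}$ through the intermediate extension $\tilde{H}$ is automatic and no explicit computation relating the two contractions is required. The unitary operators $\mathbf{S}_{\pm,P_1/\alpha}$ and $\mathbf{S}_{a,b}$ introduced for Theorem \ref{thm:arb_edg} already encode the geometric content of translating boundary data from arbitrary edge lengths to unit edges, and they enter only indirectly through that theorem.
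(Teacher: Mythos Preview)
Your proposal is correct and is precisely the intended derivation: the paper states the corollary without proof, and the natural route is exactly the one you take—Theorem \ref{thm:arb_edg} puts maximal dissipative extensions of $A$ in bijection with contractions $T\colon\tilde{\mathcal{G}}_2\to\tilde{\mathcal{G}}_1$, these are the same contractions that parametrize maximal dissipative extensions of $\tilde{A}$ via its canonical boundary system, and then Theorem \ref{super-thm} applied to $\tilde{A}$ (which has unit edge lengths and $\mathcal{H}=1$) gives the explicit description via $\tilde{T}$ and point evaluations. The only point worth sharpening is your uniqueness remark: what you need is that the \emph{contraction} is uniquely determined by the extension, and this follows because the surjectivity of $F$ in any boundary system forces $F_2[D(H)]=\mathcal{G}_2$ (for $g_2\in\mathcal{G}_2$ choose $x$ with $F(x)=(Tg_2,g_2)$; then $x\in D(H)$), so $T$ is pinned down on all of $\mathcal{G}_2$.
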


\smallskip

\begin{rem} Note that the results Theorem \ref{thm:arb_edg} and Corollary \ref{cor:arb_edg} naturally extend to the cases that consist of graphs with infinitely long edges.   
\end{rem}

%%%%%%%%%%%%%%%%%%%%%%%%%%%%%%%%%%%%%%%%%%%%%%%%%%%
%%%%%%%%%%%%%%%%%%%%%%%%%%%%%%%%%%%%%%%%%%%%%%%%%%%
%%                                               %%
%% 8 Examples                                    %%
%%                                               %%
%%%%%%%%%%%%%%%%%%%%%%%%%%%%%%%%%%%%%%%%%%%%%%%%%%%
%%%%%%%%%%%%%%%%%%%%%%%%%%%%%%%%%%%%%%%%%%%%%%%%%%%

\section{Examples}\label{SEC:Examples}\smallskip

We begin with the vibrating string on a semi-axis, cf.~Jacob, Wegner \cite[Example 6.3]{JW2018} for a result on possibly non-contractive semigroups and Jacob, Kaiser \cite[Example 5.2.3]{JK} for the generation of contraction semigroups. Observe that in the latter articles different, and in both cases more restrictive, conditions on the maps $\rho$ and $T$ are required than we need below.

\smallskip

\begin{ex} \textbf{(Vibrating string)} Consider an undamped vibrating string of infinite length, i.e.,
\begin{equation}\label{VIB-1}
\frac{\partial^2w}{\partial{}^2t}(\xi,t)=-\frac{1}{\rho(\xi)}\frac{\partial}{\partial\xi}\Bigl(T(\xi)\frac{\partial{}w}{\partial\xi}(\xi,t)\Bigr)
\end{equation}
where $\xi\in[0,\infty)$ is the spatial variable, $w(\xi,t)$ is the vertical displacement of the string at place $\xi$ and time $t$, $T(\xi)>0$ is Young's modulus of the string, and $\rho(\xi)>0$ is the mass density. Both may vary along the string in a way that $\mathcal{H}$ as we define it below is bounded and satisfies \eqref{STANDARD-ASS}. We choose the momentum $x_1=\rho\frac{\partial w}{\partial t}$ and the strain $x_2=\frac{\partial w}{\partial \xi}$ as the state variables. Then, \eqref{VIB-1} can be written as
\begin{equation}\label{VIB-2}
\frac{\partial}{\partial t}\begin{bmatrix}x_1(\xi,t)\\x_2(\xi,t)\end{bmatrix}=\begin{bmatrix}0 &1\\1&0\end{bmatrix}\frac{\partial}{\partial\xi}\Bigl(\begin{bmatrix}\rho(\xi)^{-1} & 0\\0&T(\xi)\end{bmatrix}\begin{bmatrix}x_1(\xi,t)\\x_2(\xi,t)\end{bmatrix}\Bigr)
\end{equation}
which is of the form considered in Theorem \ref{GEN-SEMI-AXIS} if we put
$$
P_1:=\begin{bmatrix}0 & 1\\1&0\end{bmatrix},\;\;P_0:=\begin{bmatrix}0 & 0\\0&0\end{bmatrix},\,\text{ and }\;\mathcal{H}(\xi):=\begin{bmatrix}\rho(\xi)^{-1} & 0\\0&T(\xi)\end{bmatrix}.
$$
We decompose $P_1=S^{\star}\Delta S$ with
$$
S=S^{\star}=\frac{1}{\sqrt{2}}\begin{bmatrix}1 & 1\\1&-1\end{bmatrix}\;\text{ and }\;\Delta=\begin{bmatrix}1 & 0\\0&-1\end{bmatrix}.
$$
In particular, we have $n_+=n_-=1$, $\Lambda$ and $\Theta$ are $1\!\!\times\!\!1$-matrices with entries $\lambda_1=\theta_1=1$. The map $T\colon\mathcal{G}_2\rightarrow\mathcal{G}_1$ in Theorem \ref{GEN-SEMI-AXIS} is thus given by multiplication with a number $t\in\mathbb{C}$ and is contractive if and only if $|t|\leqslant1$. If we follow the notation in \eqref{VIB-1}, the boundary condition of Theorem \ref{GEN-SEMI-AXIS} reads as
$$
t\cdot\pr_-(S\mathcal{H}x(0))=\pr_+(S\mathcal{H}x(0))
$$
which we can rewrite this in the form $W_B(\mathcal{H}x)(0)=0$ given by the matrix $W_B=[t-1\;\;t+1]\in\mathbb{C}^{1\times2}$. Notice that \cite[Example 5.2.3]{JK} yields exactly the same condition, but requires that $\rho$, $\rho^{-1}$, $T$ and $T^{-1}\in\operatorname{L}^{\infty}(0,\infty)$---whereas we need only that $T$ and $\rho^{-1}$ are bounded. Using the results of \cite[Example 6.3]{JW2018} we can cover also cases where $T$ and $\rho^{-1}$ need not to be bounded and get a (possibly non-contractive) $\Cnull$-semigroup with the boundary condition $[w_1\;\;w_2](\mathcal{H}x)(0)=0$ if and only if $w_1(T(0)/\rho(0))^{1/2}\not=w_2T(0)$. For this we need however to assume that $\rho$ and $T$ are continuously differentiable and that certain other technical conditions hold, see \cite[Theorem 4.10]{JW2018}.
\end{ex}

\smallskip

\begin{rem} We point out that the techniques developed in the current article also allow to treat unbounded Hamiltonians in the context of Example \ref{VIB-1}. For this, however, the boundary system needs to be changed in that a \textquotedblleft{}point evaluation at infinity\textquotedblright{} has to be added in order to be able to apply Proposition \ref{thm:P00-1}.
\end{rem}

\smallskip

Next we study the coupled transport equation on an infinite tree in which every edge is an interval of length one, compare also \cite[Example 6.2]{JW2018} and \cite[Example 5.2]{JK} where a different method is applied.

\smallskip

\begin{ex} \textbf{(Transport on a tree)} Let $\Gamma$ be an infinite, complete binary tree where we use the following notation to enumerate the edges.

\begin{center}
\begin{tikzpicture}[scale=1.5,font=\footnotesize]
% Specify spacing for each level of the tree
\tikzstyle{level 1}=[level distance=8mm,sibling distance=40mm]
\tikzstyle{level 2}=[level distance=8mm,sibling distance=20mm]
\tikzstyle{level 3}=[level distance=8mm,sibling distance=10mm]

\tikzset{
% Two node styles for game trees: solid and hollow
solid node/.style={circle,draw,inner sep=1.5,fill=black},
hollow node/.style={circle,draw,inner sep=1.5}
}

% The Tree
\node(0)[solid node,label=above:{}]{}

child{node(1)[solid node,label=right:{}]{}
child{node[solid node,label=right:{}]{}
child{node[solid node,label=below:{\huge$\vdots$}]{} edge from parent node[left]{$7$}}
child{node[solid node,label=below:{\huge$\vdots$}]{} edge from parent node[right]{$8$}}
 edge from parent node[left]{$3$}
}
child{node[solid node,label=right:{}]{}
child{node[solid node,label=below:{\huge$\vdots$}]{} edge from parent node[left]{$9$}}
child{node[solid node,label=below:{\huge$\vdots$}]{} edge from parent node[right]{$10$}}
 edge from parent node[right]{$4$}}
edge from parent node[left,xshift=-3]{$1$}
}
child{node(2)[solid node,label=right:{}]{}
child{node[solid node,label=right:{}]{}
child{node[solid node,label=below:{\huge$\vdots$}]{} edge from parent node[left]{$11$}}
child{node[solid node,label=below:{\huge$\vdots$}]{} edge from parent node[right]{$12$}}
 edge from parent node[left]{$5$}}
child{node[solid node,label=right:{}]{}
child{node[solid node,label=below:{\huge$\vdots$}]{} edge from parent node[left]{$13$}}
child{node[solid node,label=below:{\huge$\vdots$}]{} edge from parent node[right]{$14$}}
edge from parent node[right]{$6$}}
edge from parent node[right,xshift=3]{$2$}
};

\end{tikzpicture}
\end{center}
On every edge $e\in E=\{1,2,3\dots\}$ we want to consider the transport equation
$$
\frac{\partial x}{\partial t}(\xi,t)=\frac{\partial x}{\partial \xi}(\xi,t) \,\text{ with }\, t\geqslant0,\;\xi\in[0,1]
$$
where we want that transport happens downwards on all edges. In order to fit this into the framework of port-Hamiltonian operators we put $d=1$, $P_1=1$ and $P_0=0$. That is $n_+=1$ and $n_-=0$. For $e\in E=\{1,2,3\dots\}$ we put $a(e)=0$, $b(e)=1$ and we define $\mathcal{H}_e(\xi)=1$ for $\xi\in[0,1]$. The direction of transport we still need to built into the boundary conditions: If we for instance look at the vertex that connects the edges $1$, $3$ and $4$ then the boundary condition should be such that it relates what comes in from edge $1$ with what goes out into $3$ and $4$. With $\alpha$, $\beta\in\mathbb{R}$ we can for instance put
\begin{equation}\label{TREE}
x_1(0)=\textstyle \alpha x_3(1)+\beta x_4(1),
\end{equation}
since the transport on $[0,1]$ goes from the right to the left. To apply Theorem \ref{super-thm} we need to formulate the boundary conditions via
$$
T(x_1(1),x_2(1),x_3(1),\dots)=(x_1(0),x_2(0),x_3(0),\dots)
$$
where $T\colon\ell^2\rightarrow\ell^2$ is a contraction. If we want to follow the pattern \eqref{TREE}, then we can put
$$
T((z_i)_{i\in\mathbb{N}}):=(\alpha z_{2i+1}+\beta z_{2i+2})_{i\in\mathbb{N}}
$$
which can for instance easily checked to be a contraction if $0\leqslant\alpha+\beta\leqslant1/2$.

\end{ex}

\smallskip

Next we discuss an example where the edge length is not bounded away from zero. 

\begin{ex} \textbf{(Transport on a star graph with edge length tending to zero)}\label{STAR} We consider the graph $\Gamma=(V,E)$ given by  $V=\{0, 1, \frac{1}{2}, \frac{1}{3}, \frac{1}{4},\dots \}$ and $E=\{ (0,\frac{1}{k})\:;\:k\in\{1,2,3,\dots\}\}$.

\begin{center}
\begin{tikzpicture}[scale=1.5,font=\footnotesize]
% Specify spacing for each level of the tree
\tikzstyle{level 1}=[level distance=15mm,sibling distance=40mm]
\tikzstyle{level 2}=[level distance=15mm,sibling distance=20mm]
\tikzstyle{level 3}=[level distance=15mm,sibling distance=10mm]

\tikzset{
% Two node styles for game trees: solid and hollow
solid node/.style={circle,draw,inner sep=1.5,fill=black},
hollow node/.style={circle,draw,inner sep=1.5}
}

% The star
\node(0)[solid node, label=below:{$0$}] at(0, 0) {};
\node(1)[solid node, label=below:{$1/5$}] at (2.0, 0) {};
\node(2)[solid node, label=below:{$1/4$}] at (2.5, 0) {};
\node(3)[solid node, label=below:{$1/3$}] at (3.33, 0) {};
\node(4)[solid node, label=below:{$1/2$}] at (5, 0) {};
\node(5)[solid node, label=below:{$1$}] at (10, 0){};

\node(6)[] at(1, -0.03) {\huge$\cdots$};

  \path[]
(0) edge [bend left]  (1)

(0) edge [bend left]  (2)
(0) edge [bend left]  (3)
(0) edge [bend left]  (4)
(0) edge [bend left]  (5);
\end{tikzpicture}
\end{center}

We put $a((0,\frac{1}{k}))=0$ and $b((0,\frac{1}{k}))=\frac{1}{k}$ for $k\geqslant1$. We further put $d=1$, $P_1=-1$, $P_0=0$ and $\mathcal{H}_e=1$ for each $e\in E$. That is we have on each edge a transport equation and transport happens from left to the right. We select as a boundary condition that $x_e(0)=0$ should hold on all edges. That is we consider the operator
$$
A\colon D(A)\subseteq\bigoplus_{k=1}^{\infty}\Ls^2(0,{\textstyle\frac{1}{k}})\longrightarrow \bigoplus_{k=1}^{\infty}\Ls^2(0,{\textstyle\frac{1}{k}})
$$
given by
\[
    A x = (-x'_k)_{k=1,2,\dots}\;\text{ for } \; x\in D(A)=\big\{ x\in \bigoplus_{k=1}^{\infty}\Ls^2(0,{\textstyle\frac{1}{k}})\:;\: (x'_k)_{k=1,2,\dots}\in\bigoplus_{k=1}^{\infty}\Ls^2(0,{\textstyle\frac{1}{k}}),\;x_k(0)=0\big\}
\]
where we use the simplification $x_k:=x_{\{0,1/k\}}$ for $k\geqslant1$. We put $P_n\colon \bigoplus_{k=1}^{\infty}\Ls^2(0,\frac{1}{k})\rightarrow \bigoplus_{k=1}^{n}\Ls^2(0,\frac{1}{k})$ for $n\geqslant1$, $H:=A$ and observe that $P_nH\subseteq HP_n$ holds for $n=1,2,\dots$ in view of the boundary condition. By Theorem \ref{thm:lfw} we get that $A$ is maximal dissipative if and only if $AP_n$ is maximal dissipative for every $n\geqslant1$. But since we here work over a finite graph, we may apply Theorem \ref{super-thm} with $n_+=1$, $n_-=0$, $|E_{\ell}|=n$. The boundary condition that leads to a generator of a contraction semigroup, and thus to a maximal dissipative operator, is given by $T(x_1(1),\dots,x_n(1/n))=(x_1(0),\dots,x_n(0))$ with a contraction $T\colon\mathbb{C}^n\rightarrow\mathbb{C}^n$. If we here choose $T=0$ we are done and get that $A$ as defined above generates a contraction semigroup.
\end{ex}

\smallskip

The reason for involving the next example is twofold. On the one hand it demonstrates how to apply Theorem \ref{super-thm2} and at the same time it shows that proving the last condition (Theorem \ref{super-thm2}(v)) assumed on the graph might be a difficult thing in practice. 

\smallskip

\begin{ex} \textbf{(Transport on the unit interval---considered as a network)}\label{LINE} Let $V=\{ \frac1{k}\:;\: k\in \{1,2,3,\ldots\} \}$ and $E=\{ (\frac1{k+1},\frac1{k})\:;\: k\in \{1,2,3,\ldots \}\}$.

\smallskip

\begin{center}
\begin{tikzpicture}[scale=1.5,font=\footnotesize]
% Specify spacing for each level of the tree
\tikzstyle{level 1}=[level distance=15mm,sibling distance=40mm]
\tikzstyle{level 2}=[level distance=15mm,sibling distance=20mm]
\tikzstyle{level 3}=[level distance=15mm,sibling distance=10mm]

\tikzset{
% Two node styles for game trees: solid and hollow
solid node/.style={circle,draw,inner sep=1.5,fill=black},
hollow node/.style={circle,draw,inner sep=1.5}
}

% The line
\node(0)[solid node, label=below:{$0$}] at(0, 0) {};
\node(1)[solid node, label=below:{$1/5$}] at (2.0, 0) {};
\node(2)[solid node, label=below:{$1/4$}] at (2.5, 0) {};
\node(3)[solid node, label=below:{$1/3$}] at (3.33, 0) {};
\node(4)[solid node, label=below:{$1/2$}] at (5, 0) {};
\node(5)[solid node, label=below:{$1$}] at (10, 0){};

\node(6)[] at(1, -0.03) {\huge$\cdots$};

  \path[]

(1) edge   (2)
(2) edge   (3)
(3) edge   (4)
(4) edge   (5);
\end{tikzpicture}
\end{center}

\smallskip

We put $a((\frac1{k+1},\frac1{k}))=\frac1{k+1}$ and $b((\frac{1}{k+1},\frac1{k}))=\frac1{k}$.  Consider the operator
\[
    B x = (x'_e)_{e\in E},
\]
where
\[
x\in D(B)=\{ x\in X\:;\:  (x'_e)_{e\in E}\in X,\, x_{(\frac1{k+1},\frac1{k})}\bigl(\textstyle{\frac1{k+1}}\bigr)=x_{({\frac1{k+2}},\frac1{k+1})}\bigl({\frac1{k+2}}\bigr),\, x_{(\frac12,1)}(1)=0\}.
\]
We use Theorem \ref{super-thm2} applied to $A=B$ and $P_n = \chi_{E_n}$ with $E_n =\{ (\frac1{k+2},\frac1{k+1})\:;\: k\in\{0,\dots,n\}\}$. Note that Assumptions in Theorem are easily checked. Moreover, with the boundary system for uniformly positive edge lengths it is easy to see that $A_n$  is maximal dissipative. Hence, $B$ generates a contraction semi-group as expected.
\end{ex}

We conclude this article by pointing out that the previous example suggests how more complicated examples with arbitrarily small edges can be constructed: Given a graph $\Gamma_1$ where maximal dissipative extensions are already characterized by boundary conditions, and an arbitrary countably infinite graph $\Gamma_2$, the method of Example \ref{LINE} allows to treat the new graph that arises by assigning to each vertex of $\Gamma_2$ a copy of $\Gamma_1$ but with all edges of $\Gamma_1$ being scaled down such that the overall edge lengths accumulate zero. We illustrate the latter idea with the following two pictures.

\begin{center}
\begin{tikzpicture}[scale=1.25,font=\footnotesize]

\tikzset{
% Two node styles for game trees: solid and hollow
solid node/.style={circle,draw,inner sep=1.5,fill=black},
hollow node/.style={circle,draw,inner sep=1.5}
}

% The line

\node(0)[solid node, label=below:{}] at (8, 0){};
\node(1)[solid node, label=below:{}] at (10, 0){};
\node(2)[solid node, label=below:{}] at (10, 2){};
\node(3)[solid node, label=below:{}] at (8, 2){};

\node[label=below:{\hspace{-10pt}\normalsize$\Gamma_1$}] at (9, -0.2){};

\node(4)[solid node, label=below:{}] at (5, 0){};
\node(5)[solid node, label=below:{}] at (6, 0){};
\node(6)[solid node, label=below:{}] at (6, 1){};
\node(7)[solid node, label=below:{}] at (5, 1){};

\node[label=below:{\hspace{-12pt}\normalsize$\frac{1}{2}\Gamma_1$}] at (5.5, -0.15){};

\node(8)[solid node, label=below:{}] at (3, 0){};
\node(9)[solid node, label=below:{}] at (2.66, 0){};
\node(10)[solid node, label=below:{}] at (3, 0.33){};
\node(11)[solid node, label=below:{}] at (2.66, 0.33){};

\node[label=below:{\hspace{-12pt}\normalsize$\frac{1}{4}\Gamma_1$}] at (2.8, -0.15){};

\node(12)[solid node, label=below:{}] at (0.7, 0){};
\node(13)[solid node, label=below:{}] at (0.5, 0){};
\node(14)[solid node, label=below:{}] at (0.7, 0.2){};
\node(15)[solid node, label=below:{}] at (0.5, 0.2){};

\node[label=below:{\hspace{-14pt}\normalsize$\frac{1}{8}\Gamma_1$}] at (0.6, -0.15){};

\node(16)[ label=below:{}] at (-1, 0.1){};

\node(100)[] at(-1.3, 0.06) {\huge$\cdots$};

  \path[]
(0) edge   (1)
(1) edge   (2)
(2) edge   (3)
(3) edge   (0);

  \path[]
(3) edge   (6);

  \path[]
(4) edge   (5)
(5) edge   (6)
(6) edge   (7)
(7) edge   (4);

  \path[]
(7) edge   (10);

  \path[]
(8) edge   (9)
(9) edge   (11)
(10) edge   (11)
(10) edge   (8);

  \path[]
(11) edge   (14);

  \path[]
(14) edge   (15)
(15) edge   (13)
(13) edge   (12)
(12) edge   (14);

  \path[]
(15) edge   (16);

\end{tikzpicture}
\end{center}

\bigskip

In the first picture $\Gamma_1$ is the 2-dimensional cube and in the second one the line with three vertices. The graph $\Gamma_2$ is in both cases a line graph which is infinite in one direction with uniform edge lengths.

\bigskip\bigskip\bigskip

\begin{center}
\begin{tikzpicture}[scale=1.2,font=\footnotesize]

\tikzset{
% Two node styles for game trees: solid and hollow
solid node/.style={circle,draw,inner sep=1.5,fill=black},
hollow node/.style={circle,draw,inner sep=1.5}
}

% The line

\node(0)[solid node, label=below:{}] at (10, 1){};
\node(1)[solid node, label=below:{}] at (10, 0){};
\node(2)[solid node, label=below:{}] at (10, -1){};

\node[label=below:{\hspace{-10pt}\normalsize$\Gamma_1$}] at (10, -1.2){};

  \path[]
(0) edge   (1)
(1) edge   (2);

\node(3)[solid node, label=below:{}] at (7, 0.5){};
\node(4)[solid node, label=below:{}] at (7, 0){};
\node(5)[solid node, label=below:{}] at (7, -0.5){};

\node[label=below:{\hspace{-12pt}\normalsize$\frac{1}{2}\Gamma_1$}] at (7, -0.65){};

  \path[]
(3) edge   (4)
(4) edge   (5);

\node(6)[solid node, label=below:{}] at (4, 0.33){};
\node(7)[solid node, label=below:{}] at (4, 0){};
\node(8)[solid node, label=below:{}] at (4, -0.33){};

\node[label=below:{\hspace{-12pt}\normalsize$\frac{1}{3}\Gamma_1$}] at (4, -0.4){};
  \path[]
(6) edge   (7)
(7) edge   (8);

\node(9)[solid node, label=below:{}] at (1, 0.25){};
\node(10)[solid node, label=below:{}] at (1, 0){};
\node(11)[solid node, label=below:{}] at (1, -0.25){};

\node[label=below:{\hspace{-12pt}\normalsize$\frac{1}{4}\Gamma_1$}] at (1, -0.3){};

  \path[]
(9) edge   (10)
(10) edge   (11);

\node(12)[ label=below:{}] at (-0.6, 0){};

\path[]
(1) edge   (12);

\node(100)[] at(-1, -0.02) {\huge$\cdots$};

\end{tikzpicture}
\end{center}

\bigskip

\footnotesize

\red{{\sc Acknowledgements. }The authors would like to thank the anonymous referee for their careful work and for several remarks that helped to improve the article.}

\normalsize

\normalsize

\bibliographystyle{amsplain}

\end{document}